\numberwithin{figure}{section}
\numberwithin{table}{section}
\newtheorem{theorem}{Theorem}[section]
\newtheorem{lemma}[theorem]{Lemma}
\newtheorem{prop}[theorem]{Proposition}
\theoremstyle{definition}
\newtheorem{definition}[theorem]{Definition}
\newtheorem{example}[theorem]{Example}
\newtheorem{notation}[theorem]{Notation}
\newtheorem{cor}[theorem]{Corollary}
\theoremstyle{remark}
\newtheorem{remark}[theorem]{Remark}
\numberwithin{equation}{section}
\newfont{\tap}{tap scaled 650}
\def \h{{\mathfrak h}}
\def \Z{{\mathbb Z}}
\def \S{{\mathbb S}}
\def \O{{\mathcal O}}
\def \[{[ }
\def \]{] }
\def\t{\widetilde}
\def\h{\widehat}
\def\S{\Sigma}
\newcommand{\bm}{\boldsymbol}
\def\b{\bm{b}}
\def \sgn{\mathrm{sgn }}
\definecolor{dgreen}{rgb}{0,0.5,0}        
\definecolor{dred}{rgb}{0.5,0,0}
\begin{document}

\title[Cluster algebras of finite mutation type with coefficients]{ Cluster algebras of finite mutation type \\ with coefficients}
\author{Anna Felikson,  Pavel Tumarkin}
\address{Department of Mathematical Sciences, Durham University, Mathematical Sciences \& Computer Science Building, Upper Mountjoy Campus, Stockton Road, Durham, DH1 3LE, UK}
\email{anna.felikson@durham.ac.uk, pavel.tumarkin@durham.ac.uk}
\thanks{Research of P.T. was supported in part by the Leverhulme Trust research grant RPG-2019-153.}

\begin{abstract}
We classify mutation-finite cluster algebras with arbitrary coefficients of geometric type. This completes the classification of all mutation-finite cluster algebras started in~\cite{FeSTu1}. 
\end{abstract}

\maketitle
\setcounter{tocdepth}{1}
\tableofcontents

\section{Introduction and main results}

Cluster algebras with coefficients were introduced in~\cite{FZ4}, the fourth paper in the series founding the theory of cluster algebras. Cluster algebras of geometric type are defined as those having their coefficients in tropical semifields. In particular, this includes the important case of cluster algebras with principal coefficients.

A cluster algebra of geometric type is completely defined by an integer $(m+n)\times n$ {\em exchange matrix} with skew-symmetrizable top $n\times n$ part (called {\em principal} or {\em mutable} part). Exchange matrices undergo involutive transformations called {\em mutations}, all exchange matrices which can be obtained by iterative mutations form a {\em mutation class}. We say that a cluster algebra is {\em mutation-finite} if its mutation class is finite.

Coefficient-free mutation-finite cluster algebras were classified in~\cite{FeSTu1,FeSTu2}. These algebras found various applications, including ones in quantum field theories (see e.g.~\cite{CV,CV2}).

In this paper, we classify all mutation-finite exchange matrices with arbitrary coefficients. We first restrict ourselves to matrices with skew-symmetric mutable part (this assumption will be dropped later). In this case the matrix can be represented by a quiver with vertices of two types: {\em mutable} (corresponding to the mutable part of the matrix) and {\em frozen} (such quivers are also called {\em ice quivers}). The quiver also undergoes mutations compatible with mutations of the matrix, we say that a quiver is mutation-finite if the corresponding exchange matrix is. 

The first easy observation is that the mutable part of a mutation-finite quiver should be mutation-finite. Mutation-finite quivers without frozen vertices were classified in~\cite{FeSTu1}, the list 
 consists of the following (overlapping) classes of quivers:
 rank $2$ quivers, quivers originating from  surfaces (see Section~\ref{background}),
 quivers of finite type (i.e., with an orientation of a finite type Dynkin diagram in the mutation class), 
quivers of affine type (ones with an orientation of an affine type Dynkin diagram in the mutation class),
quivers of extended affine types $E^{(1,1)}_6$, $E^{(1,1)}_7 $ and $E^{(1,1)}_8$ (see Fig.~\ref{mut_fin}),
exceptional quivers of types $X_6$ and $X_7$ (see also Fig.~\ref{mut_fin}).

Another easy observation is that it is enough to consider just one frozen vertex. Indeed, as there are no arrows between frozen vertices, the frozen vertices do not affect each other in the process of mutations.

\begin{definition}
\label{def_b}  
Let $Q$ be a quiver of finite mutation type (with vertices $v_1,\dots,v_n$ all being mutable).  
Let $q$ be an additional  (frozen) vertex, denote by $b_i$ the number of arrows connecting a vertex $v_i$ of $Q$ to $q$.
We will say that the integer coefficient vector $\b=(b_1,\dots,b_n)$ is
  {\it admissible}  if $\b\ne 0$ and the quiver spanned by $Q$ and $q$ with the unique frozen vertex $q$ is of finite mutation type.   

\end{definition}

Therefore, the question of classification of mutation-finite exchange matrices is equivalent to finding all admissible vectors for every mutation-finite quiver without frozen vertices. 

\medskip

A distinguished class of cluster algebras consists of algebras of {\em finite type}: these were classified by Fomin and Zelevinsky in~\cite{FZ2} by establishing a connection with Cartan-Killing classification of simple Lie algebras. They also proved in~\cite{FZ4} that adding any coefficients to a cluster algebra of finite type results in a mutation-finite cluster algebra. Moreover, this characterizes cluster algebras of finite type: if every exchange matrix with given principal part is mutation-finite, then the principal part defines an algebra of finite type. A stronger conjecture was made in~\cite{FZ4} stating that it is sufficient to check the mutation-finiteness of the algebra with principal coefficients only, this was proved by Seven~\cite{S}.   

In particular, this provides the answer for the finite type.

\begin{prop}[\cite{FZ4}]
 If $Q$ is of finite type then  any vector $\b$ is  admissible.

\end{prop}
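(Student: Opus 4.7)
The plan is to reduce admissibility of an arbitrary coefficient vector $\b$ to the classical structure theorem for cluster algebras of finite type, which asserts that the cluster complex (equivalently, the exchange graph) depends only on the Cartan--Killing type of the principal part and is independent of the coefficients.

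First I would recall from \cite{FZ2} that a cluster algebra whose principal part is of finite type has only finitely many distinct clusters; let $N$ denote the number of vertices of the corresponding exchange graph. Next I would note that mutation of the extended $(n+1)\times n$ matrix $\tilde{B}$, obtained from $B_Q$ by appending the row $\b$, acts on the top $n\times n$ block exactly as in the coefficient-free case, while the row $\b$ is updated by the standard formula for coefficient rows. Therefore, the mutation orbit of $\tilde{B}$ projects naturally onto the mutation orbit of $B_Q$, and to conclude it suffices to bound the number of distinct coefficient rows lying over each vertex of the projected orbit.

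The key input, which is the substantive content of the cited result in \cite{FZ4}, is the combinatorial rigidity of the cluster pattern of a finite type algebra under changes of coefficients: any two mutation sequences producing the same coefficient-free seed also produce the same extended seed. Accepting this, each of the $N$ vertices of the exchange graph determines a unique matrix in the mutation class of $\tilde{B}$, so that mutation class has at most $N$ elements, and $\b$ is admissible for every choice.

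The main obstacle is precisely establishing this rigidity, i.e.\ showing that the finite exchange graph of $Q$ governs the coefficient-extended pattern as well. This is not formal from the mutation rule alone; it uses the explicit finite-type classification and the description of seeds via roots of the corresponding root system, as carried out in \cite{FZ4}. Once this step is granted, the finiteness of the mutation class in our one-frozen-vertex setting is automatic.
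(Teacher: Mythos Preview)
The paper does not supply a proof of this proposition; it is stated as a citation of a result from \cite{FZ4}, with the surrounding text noting only that Fomin and Zelevinsky ``proved in~\cite{FZ4} that adding any coefficients to a cluster algebra of finite type results in a mutation-finite cluster algebra.'' There is therefore no in-paper argument to compare against.

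Your sketch is correct in substance and correctly identifies the non-trivial input: the theorem from \cite{FZ4} that for finite type the exchange graph is finite and independent of the choice of coefficients (so that two mutation sequences yielding the same coefficient-free seed yield the same extended seed). From this the finiteness of the mutation class of $\tilde B$ follows exactly as you say, since each of the finitely many (labeled) seeds carries a single extended exchange matrix. You also rightly flag that this rigidity is the real content and is not a formal consequence of the mutation rule; indeed, establishing it is precisely what \cite{FZ4} does, and the present paper simply invokes that result.
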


The next large class of quivers consists of quivers from surfaces~\cite{FST}. We first prove the following statement.

\begin{prop}[Theorem~\ref{t_per}]
  \label{p_t_per}
 If $Q$ is arising from a surface then  $\b$ is admissible if and only if it corresponds to a peripheral lamination. 

\end{prop}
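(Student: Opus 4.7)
The plan is to invoke the Fomin--Shapiro--Thurston dictionary \cite{FST} between cluster algebras of surface type and marked surfaces, together with the lamination-to-shear-coordinate correspondence for coefficients. Under this dictionary, quivers in the mutation class of $Q$ correspond to (tagged) triangulations $T$ of an associated marked surface $\S$, and mutations correspond to flips of arcs. Integer coefficient vectors $\b$ attached to a fixed base triangulation $T_0$ are in bijection with integral measured laminations $L$ on $\S$ via shear coordinates $b_i = b_i(T_0, L)$, and the bijection is equivariant with respect to flips and mutations. Thus $\b$ is admissible if and only if the shear coordinate vectors of the corresponding lamination $L$ take only finitely many values as $T$ ranges over all tagged triangulations of $\S$; equivalently, since they are integers, they are uniformly bounded in absolute value.

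For the forward direction, assume every component of $L$ is peripheral. A direct local computation in a neighbourhood of the distinguished boundary component or puncture shows that each peripheral component $\gamma$ contributes only bounded shear coordinates $b_i(T, \gamma)$ (in fact taking values in $\{-1, 0, 1\}$) to each arc $a_i$ of any triangulation $T$; in particular $b_i(T, \gamma) = 0$ whenever $a_i$ is disjoint from the peripheral collar. Summing over the finitely many components of $L$ gives a uniform bound on $|b_i(T, L)|$ independent of $T$, so only finitely many coefficient vectors appear in the mutation class.

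For the reverse direction, we argue the contrapositive. If $L$ has a non-peripheral component $\gamma$, then $\gamma$ is an essential simple curve not freely isotopic to any peripheral curve, so there exists a simple essential curve $\delta$ on $\S$ with geometric intersection number $i(\gamma, \delta) \ge 1$. The Dehn twist $T_\delta$ is realisable by a finite sequence of flips of triangulations of $\S$, and iterating $T_\delta^k$ produces triangulations on which the geometric intersection number of $L$ with arcs grows linearly in $k$. After choosing the base triangulation so that $\delta$ (and its iterated twist images) lie in favourable position, the shear coordinates grow at the same rate, so the mutation class of $\b$ is infinite.

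The main technical obstacle is the quantitative step in the reverse direction: because shear coordinates are signed counts, one must rule out cancellations that could keep a coordinate bounded even while intersection numbers diverge. This is handled by choosing $\delta$ transverse to $L$ and flipping $T_0$ so that $\delta$ decomposes as a union of arcs of the triangulation, after which the Dehn twist acts transparently on shear coordinates. The remaining technical points (tagged versus untagged loops at punctures, closed surfaces versus those with boundary, and reduction to single-component laminations) follow the standard template of \cite{FST}.
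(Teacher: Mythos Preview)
Your strategy is essentially correct, but the paper's argument is markedly cleaner in both directions, and the difference is instructive.

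For the forward direction, instead of bounding shear coordinates of a peripheral curve by a local computation, the paper observes that a peripheral lamination is invariant under every Dehn twist (it can be isotoped off any simple closed curve), hence under the entire mapping class group. Since there are only finitely many triangulations up to the mapping class group, and the shear-coordinate vector of $L$ depends only on the pair $(T,L)$ up to that action, finiteness is immediate. Your $\{-1,0,1\}$ claim for a single peripheral component is not obvious for arcs with both endpoints on the relevant boundary component and would require a genuine case analysis; the invariance argument bypasses this entirely.

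For the reverse direction, the paper avoids your ``main technical obstacle'' of signed cancellations altogether. Applying $D_C^k$ to the triangulation while fixing $L$ is the same as applying $D_C^{-k}$ to $L$ while fixing $T$. Since $C$ meets $L$, the laminations $D_C^{-k}(L)$ are pairwise distinct, and by the bijection between laminations and integer shear-coordinate vectors (\cite{FT,FG}) they therefore have pairwise distinct coordinate vectors with respect to $T$. Injectivity does all the work; no growth estimate is needed. Incidentally, your phrase ``$\delta$ decomposes as a union of arcs of the triangulation'' does not quite make sense since $\delta$ is a closed curve, but with the bijection argument that step becomes unnecessary.
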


Due to results of Gu~\cite{Gu1}, Proposition~\ref{p_t_per} provides an algorithm which determines whether a given quiver from a surface with a frozen vertex is mutation-finite: using~\cite{Gu1}, one can reconstruct a triangulation, then one can reconstruct a lamination using a procedure from~\cite{FT}, and then it is straightforward to check whether a given lamination is peripheral. We will give a more explicit characterization of coefficient vectors corresponding to peripheral laminations in Section~\ref{q-surfaces}.  

Next, we consider affine and exceptional mutation-finite classes. Every mutation class of quivers of affine type contains a representative with a double arrow, so the main tool in the considerations is the following necessary condition (which we call  the {\it annulus property}).

\begin{prop}[Corollary~\ref{cor-ann}]
  \label{prop-ann}
Let $Q$ be a quiver containing a double arrow from $v_1$ to $v_2$. Then a vector $\b$ is admissible only if $b_1=-b_2\le 0$.
\end{prop}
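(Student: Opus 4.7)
The plan is to reduce the statement to the case $Q=(v_1\Rightarrow v_2)$ (the Kronecker quiver) and then carry out an explicit piecewise-linear analysis of the mutation dynamics on the pair $(b_1,b_2)\in\ZZ^2$.

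The reduction is immediate from the mutation formula: for $k\in\{1,2\}$, the updated values of $b_{iq}$ and $b_{v_1v_2}$ after $\mu_k$ depend only on the three arrow counts already present among $\{v_1,v_2,q\}$. Thus the three-vertex sub-configuration on $\{v_1,v_2,q\}$ evolves self-consistently under the sub-semigroup generated by $\mu_1$ and $\mu_2$, and admissibility of $\b$ forces all these counts to stay bounded throughout the orbit. Hence it is enough to treat $Q=(v_1\Rightarrow v_2)$.

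Next I would tabulate the composition $M:=\mu_2\circ\mu_1$, which returns the quiver to its original orientation and thus induces a piecewise-linear self-map of $\ZZ^2$. A short case analysis on the signs of $b_1$, $b_2$ and the intermediate quantity $b_2+2b_1$ produces five linear branches; the \emph{generic} one (valid for $b_1,b_2<0$, and also for $b_1<0,\ b_2\ge 0,\ b_2+2b_1<0$) is the linear map $M_0(b_1,b_2)=(3b_1+2b_2,\ -b_2-2b_1)$. The matrix of $M_0$ has characteristic polynomial $(\lambda-1)^2$ with the single eigenline along $(1,-1)$, and one computes that $M_0$ preserves the functional $c:=b_1+b_2$ and satisfies $M_0(b_1,b_2)-(b_1,b_2)=2c\cdot(1,-1)$. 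Consequently, once an $M$-orbit enters the generic branch with $c\ne 0$, the norm $|b_1|+|b_2|$ grows linearly with the number of iterations, contradicting admissibility.

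It remains to identify the admissible vectors with the fixed locus of this dynamics. A direct calculation shows that $(b,-b)$ with $b\le 0$ is in the generic branch and is $M_0$-fixed; moreover a single mutation $\mu_1$ takes it to $(-b,b)$ in the flipped orientation, which is again of the admissible form. Conversely, one checks branch-by-branch that any $(b_1,b_2)$ violating $b_1=-b_2\le 0$ enters the generic branch after at most two applications of $M$ and that the value of $c$ at the entry point is nonzero (the potential escape $c=0$ in a non-generic branch is excluded by the branch-defining sign conditions themselves, for instance by the condition $b_2\ge -2b_1$ in the branch where $b_1<0$ and $b_2+2b_1\ge 0$). Hence such a $\b$ generates an unbounded orbit and fails to be admissible. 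The main technical obstacle is this last bookkeeping---tracking $c$ through the handful of non-generic branches to rule out hidden periodic orbits---but it is routine once the piecewise-linear map $M$ is written down explicitly.
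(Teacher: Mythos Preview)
Your argument is correct, but the paper takes a different, geometric route. There the Kronecker subquiver $v_1\!\Rightarrow\! v_2$ is identified with the annulus $S_{1,1}$ having one marked point on each boundary; Theorem~\ref{t_per} (admissible vectors $\leftrightarrow$ peripheral laminations) is invoked, and the only peripheral lamination on $S_{1,1}$ consists of copies of the core closed curve, whose negative shear coordinates give exactly $b_1=-b_2\le 0$ (Example~\ref{annulus}). Your approach bypasses the surface model entirely and studies the piecewise-linear dynamics of $M=\mu_2\circ\mu_1$ on $\ZZ^2$ directly; the key observation that the generic linear piece $M_0$ preserves $c=b_1+b_2$ and translates by $2c(1,-1)$ is exactly right, and the branch bookkeeping you describe does go through (there are really four linear formulas, not five, but this is harmless). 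What your approach buys is that the corollary becomes logically independent of Theorem~\ref{t_per} and of the lamination formalism, so it could stand on its own as an elementary lemma; what the paper's approach buys is a uniform explanation that immediately generalises to larger annuli (Lemma~\ref{l_ann}) and, via Theorem~\ref{t_per}, to all surface types.
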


In the affine case $\t A$ we use Proposition~\ref{p_t_per} to show that the annulus property is also sufficient (see Lemma~\ref{l_ann} and Remark~\ref{a-any}).  

The same result applies to other affine quivers, but here their treatment is based on their cluster modular groups studied in~\cite{KG}. 

\begin{prop}[Theorem~\ref{t_af_e}]
For the representatives of the mutation classes of affine types $\t D$ and $\widetilde E$ shown in Fig.~\ref{af_e678}, a vector is admissible if and only if it satisfies the annulus property.

\end{prop}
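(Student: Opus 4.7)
The ``only if'' direction is an immediate consequence of Proposition~\ref{prop-ann}: each of the representatives in Fig.~\ref{af_e678} contains a double arrow, which forces $b_1=-b_2\le 0$. For the converse, fix $\b$ with $b_1=-b_2=-b\le 0$ and let $\widehat Q$ denote the ice quiver obtained by attaching to $Q$ a frozen vertex $q$ with these coefficients; we must show that the mutation class of $\widehat Q$ is finite. Because $Q$ is mutation-finite, the set of unlabeled mutable parts occurring in the mutation class of $\widehat Q$ is a finite collection $\{Q^{(1)},\dots,Q^{(k)}\}$, so it suffices to prove that for each $i$ only finitely many coefficient vectors $\b'$ appear over $Q^{(i)}$.

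The plan is to encode this finiteness as finiteness of an orbit of the cluster modular group $\Gamma(Q)$, whose structure for the affine types $\widetilde D$ and $\widetilde E$ is computed in~\cite{KG}. After fixing a labeling of each $Q^{(i)}$ and a reference mutation path $Q\to Q^{(i)}$, the coefficient vectors occurring over $Q^{(i)}$ form a single $\Gamma(Q)$-orbit under the induced (piecewise-linear) action on coefficients, so the theorem is equivalent to finiteness of the $\Gamma(Q)$-orbit of $\b$ on the ``annulus locus'' $L=\{b_1+b_2=0\}$. From~\cite{KG} one extracts, for each type in Fig.~\ref{af_e678}, a presentation of $\Gamma(Q)$ as a finite group of quiver symmetries extended by a single infinite-order generator realized by an explicit mutation sequence. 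The signs arising along this sequence are controlled by the relation $b_1+b_2=0$, so the induced map on $L$ is genuinely linear and can be written as an integer matrix.

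The main obstacle is the last verification: for each of the four families one must check that this infinite-order generator acts on $L$ with finite order, so that the whole action of $\Gamma(Q)$ on $L$ factors through a finite group and orbits are automatically finite. The reason to expect this is geometric, parallel to the $\widetilde A$ case of Proposition~\ref{p_t_per}: the infinite generator of $\Gamma(Q)$ is a Dehn-twist-type move along a loop whose algebraic counterpart is exactly the double arrow, and the annulus relation $b_1+b_2=0$ forces the coefficient vector---viewed as a lamination---to have zero transverse intersection with that loop, making it fixed by the twist. Carrying out the explicit matrix computation case by case for $\widetilde D_n,\widetilde E_6,\widetilde E_7,\widetilde E_8$ turns this expectation into a proof and, combined with the reduction above, yields the theorem.
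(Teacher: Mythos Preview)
Your overall architecture---reduce admissibility to finiteness of the $\Gamma(Q)$-orbit of $\b$ on the annulus locus, using the description of $\Gamma(Q)$ from~\cite{KG}---is exactly the paper's strategy. The difference, and the gap, is in how the finite-order claim is established.

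First, the structure you quote from~\cite{KG} is not quite right: for these representatives the cluster modular group is abelian, generated by \emph{three} explicit mutation sequences $\mu^{(1)},\mu^{(2)},\mu^{(3)}$ (one per ``wing'' of the quiver), not by a finite symmetry group plus a single infinite generator. Second, your assertion that ``the signs arising along this sequence are controlled by the relation $b_1+b_2=0$, so the induced map on $L$ is genuinely linear'' is unsupported and in general false: the generators mutate at vertices of the wings, whose coefficients $b_i$ are completely unconstrained by the annulus relation, so the tropical action remains piecewise linear on $L$. You cannot simply ``write it as an integer matrix''. Consequently the sentence ``Carrying out the explicit matrix computation case by case\dots\ yields the theorem'' is not a proof but a hope, and one that does not survive the non-linearity.

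The paper bypasses all of this with a geometric trick you are close to but do not use. Remove any one wing $Q_i$: the remaining subquiver $\langle v_0,v_1,Q_j,Q_k\rangle$ is of type $\widetilde A$, so by Lemma~\ref{l_ann} the restriction of $\b$ (which still satisfies $b_0=-b_1\le 0$) is the shear-coordinate vector of a \emph{peripheral} lamination on an annulus. On that annulus the generator $\mu^{(k)}$ is literally a cyclic rotation of the boundary marked points on the side corresponding to $Q_k$, hence acts with order $|Q_k|+1$ on the restricted coefficients and trivially on $Q_j$ and on $b_0,b_1$. Running this for each pair of wings shows each $\mu^{(k)}$ has finite order on $\b$, so the abelian group they generate has finite orbit---no sign analysis or matrix computation needed. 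This is the missing idea you should supply in place of the deferred computation.
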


This result is then generalized to all quivers of affine type containing a double arrow.

\begin{prop}[Theorem~\ref{aff_gen}]
If $Q$ is a quiver of affine type containing a double arrow, then a vector $\bm b$ is
admissible if and only if $\bm b$ satisfies the annulus property.
\end{prop}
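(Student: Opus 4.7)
The ``only if'' direction is immediate: if $\bm b$ is admissible, then Corollary~\ref{cor-ann} applied at any double arrow of $Q$ yields $b_1=-b_2\le 0$.

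For the ``if'' direction, fix a sequence of mutations $\mu$ carrying $Q$ to one of the previously handled representatives $Q_0$---either those appearing in Fig.~\ref{af_e678} (treated in Theorem~\ref{t_af_e}) or the representative of the $\widetilde A$ class (treated via Lemma~\ref{l_ann} and Remark~\ref{a-any}). Apply $\mu$ to the ice quiver $(Q,\bm b)$ to obtain $(Q_0,\bm b_0)$ where $\bm b_0=\mu(\bm b)$. If $\bm b_0$ still satisfies the annulus property at the double arrow of $Q_0$, then Theorem~\ref{t_af_e} (or Lemma~\ref{l_ann}) implies that $(Q_0,\bm b_0)$ is mutation-finite, whence $(Q,\bm b)$ is also mutation-finite, since mutation-finiteness is an invariant of the mutation class.

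The crucial claim is the following preservation statement: if $Q$ and $Q'=\mu_k(Q)$ both contain a double arrow, and $\bm b$ satisfies the annulus property at $Q$, then $\mu_k(\bm b)$ satisfies the annulus property at $Q'$. This is proved by a direct computation using the mutation rules for the extended exchange matrix, with cases according to the position of $k$ relative to the endpoints of the double arrow---whether $k$ coincides with one of these endpoints (in which case the double arrow reverses but its coefficients transform covariantly, as one checks by a short computation showing $b_1=-b_2$ is preserved and the sign condition flips consistently), whether $k$ is adjacent to them, or whether $k$ is disjoint from them. Iterating the claim along a mutation sequence $\mu$ whose intermediate quivers all retain a double arrow reduces the problem to verifying admissibility at $(Q_0,\bm b_0)$.

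The principal obstacle is to ensure that such a mutation sequence $\mu$ exists for every affine quiver with a double arrow, which amounts to showing that the mutation-class representatives containing a double arrow form a subset connected under mutations preserving this property (so that one need not pass through a configuration where the annulus condition is vacuously lost). For the affine types $\widetilde D$ and $\widetilde E$ this can be verified using the cluster modular group descriptions from~\cite{KG} already invoked in the proof of Theorem~\ref{t_af_e}; for $\widetilde A$, the surface interpretation from Proposition~\ref{p_t_per} provides the analogous control via peripheral laminations. Together these inputs make the case analysis for the preservation claim tractable and complete the reduction to the previously established representative cases.
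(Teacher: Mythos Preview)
Your overall strategy matches the paper's: carry an arbitrary affine quiver with a double arrow to a standard representative by a mutation sequence that preserves the annulus property, then invoke Theorem~\ref{t_af_e} (or Lemma~\ref{l_ann} and Remark~\ref{a-any}). However, two steps are left as assertions rather than arguments.

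First, your preservation claim---that a single mutation $\mu_k$ which keeps a double arrow present also keeps the annulus property---is not actually proved. The case ``$k$ adjacent to an endpoint of the double arrow'' is listed but never analyzed. It happens to be vacuous (in a mutation-finite quiver, a vertex adjacent to exactly one endpoint of a double arrow already produces a mutation-infinite rank~3 subquiver, and a vertex adjacent to both endpoints must sit in the oriented triangle $v_2\!\to\! v_k\!\to\! v_1$, so $\mu_k$ destroys the double arrow), but you need to say this; otherwise the ``direct computation'' is not a computation at all.

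Second, and more importantly, you do not establish that the double-arrow quivers are connected by such mutations. Invoking~\cite{KG} is not to the point: the cluster modular group consists of mutation sequences returning to the \emph{same} quiver, and says nothing about paths between distinct double-arrow quivers. The paper handles this concretely and in a way that makes your general preservation lemma unnecessary: for $\widetilde D_n$ it appeals to the explicit classification in~\cite{H}, and for $\widetilde E_{6,7,8}$ it inspects the (finite) mutation classes directly, to conclude that every quiver with a double arrow is obtained from the representative in Fig.~\ref{af_e678} by mutations only at vertices \emph{not connected to} $v_0$ and $v_1$. Such mutations do not touch $b_0,b_1$, so the annulus property is preserved trivially along the entire path. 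This is the missing idea: rather than proving an abstract preservation lemma and a connectedness statement, one identifies the specific connecting mutations and observes they are far from the double arrow.
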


For the extended affine quivers and quivers of type $X_7$ we take a specific representative $Q$ from the mutation class (see Figs.~\ref{e6_11}--\ref{x7})  and an element of the cluster modular group $\varphi$ to show that the annulus property for $Q$ is not compatible with the annulus property for $\varphi(Q)$, which results in the following statement.

\begin{prop}[Theorems~\ref{t_e6_11}--\ref{t_x7}]
Let $Q$ be of type $E_6^{1,1}$, $E_7^{1,1}$, $E_8^{1,1}$ or $X_7$. Then there is no admissible vector $\b$.
\end{prop}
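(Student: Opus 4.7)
The plan is to handle each of the four exceptional mutation classes by the same template: pick a representative $Q$ from the mutation class (as specified in Figs.~\ref{e6_11}--\ref{x7}) which contains at least one double arrow, record the constraint on $\b$ imposed by Proposition~\ref{prop-ann}, then apply a carefully chosen element $\varphi$ of the cluster modular group of $Q$ so that $\varphi(Q)$ is still a quiver (on the same set of mutable vertices and the same frozen vertex $q$) containing a double arrow \emph{in a different position}. Applying Proposition~\ref{prop-ann} to $\varphi(Q)$ yields a second set of linear constraints on the components of the transformed coefficient vector $\varphi(\b)$. Rewriting these constraints in terms of $\b$ and combining them with the original ones should force $\b=0$, contradicting admissibility.

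In more detail, the first step is bookkeeping: for each of the fixed representatives $Q$ of types $E_6^{(1,1)}$, $E_7^{(1,1)}$, $E_8^{(1,1)}$, $X_7$ from Figs.~\ref{e6_11}--\ref{x7}, list all the double arrows present and, via Proposition~\ref{prop-ann}, read off the corresponding equalities $b_i=-b_j\le 0$. This already pins down the signs of several coordinates of $\b$ and forces certain pairs to be equal up to sign. The second step is to invoke the cluster modular group, relying on the description in~\cite{KG}, in order to exhibit a nontrivial $\varphi$ which sends $Q$ back to a quiver on the same mutable vertices with the same underlying unlabelled shape, but such that a double arrow now appears between a different pair $(v_k,v_\ell)$. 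Since admissibility is preserved under mutations, if $\b$ is admissible for $Q$ then $\varphi(\b)$ is admissible for $\varphi(Q)$, so one may again apply Proposition~\ref{prop-ann} to the new double arrow and extract fresh constraints on $\varphi(\b)$.

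The third step is the explicit computation: track the components $b_i$ along the sequence of mutations realising $\varphi$ using the standard mutation rule for frozen entries, express the new coefficients in terms of the original $b_i$, and then write out the combined system of linear (in)equalities. For all four quiver types the double arrows available in $Q$ together with those available in $\varphi(Q)$ (and, if needed, in $\varphi^2(Q)$ etc.) should cover enough pairs of mutable vertices to force every $b_i$ to vanish. If a single $\varphi$ does not suffice, the plan is to iterate, applying several elements of the cluster modular group until every mutable vertex has been involved in a double arrow in some $\varphi^k(Q)$.

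The main obstacle I anticipate is the third step: identifying a suitable $\varphi$ (or a short list of such elements) and controlling how $\b$ transforms under the mutation sequence realising $\varphi$. In particular, the mutation of frozen entries introduces max-plus expressions $[b_i]_+$, so one must argue case by case on the signs of the $b_i$ at each step; here the sign information coming from the annulus property at stage $Q$ is what keeps the book-keeping manageable. For $E_6^{(1,1)}$, $E_7^{(1,1)}$, $E_8^{(1,1)}$ the cluster modular groups computed in~\cite{KG} contain enough symmetries to move the double arrow around, and a short verification should close the argument; for $X_7$ the cluster modular group is smaller, so one has to choose $\varphi$ with more care and possibly combine the annulus property with the full list of multiple arrows appearing in the representative, but the same scheme applies.
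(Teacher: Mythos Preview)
Your plan is essentially the same as the paper's and would work. A couple of points where the paper's execution differs from what you sketch are worth noting.

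For the three $E^{(1,1)}$ types the paper does not look up $\varphi$ in~\cite{KG}; instead it writes down by hand a mutation sequence $\mu$ (a product of two commuting blocks $\mu_*$ and $\mu_\diamond$, iterated $3$, $5$, and $9$ times for $E_6^{(1,1)}$, $E_7^{(1,1)}$, $E_8^{(1,1)}$ respectively) with the property $\mu(Q)=Q^{\mathrm{op}}$. So the double arrow does \emph{not} move to a new pair of vertices: it stays between the same two vertices but reverses direction. The annulus property in $Q$ gives $b_8=-b_7\le 0$, while in $Q^{\mathrm{op}}$ it gives $b_7'=-b_8'\le 0$; the computation then shows $b_8'\le b_8\le 0$, forcing $b_8'=0$, and a second pass through the explicit formulas squeezes all other $b_i$ to zero. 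Your anticipated difficulty with the piecewise-linear $[\,\cdot\,]_+$ terms is handled exactly as you guessed: the sign constraints from the first application of the annulus property resolve every $[\,\cdot\,]_+$ along the way.

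For $X_7$ your intuition that one $\varphi$ is not enough is correct: the paper uses three short mutation sequences $\mu_{012},\mu_{034},\mu_{056}$, each sending $Q$ to an isomorphic quiver with the double arrows relocated, harvests two linear equations from each, adds all six to get $b_0=0$, and then invokes the inequality half of the annulus property once more to finish.
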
  

The case of quiver $Q$ of type $X_6$ is different: it admits an admissible vector such that the quiver spanned by $Q$ and the frozen vertex $q$ is isomorphic to $X_7$. In fact, the quiver admits a series of admissible vectors as follows.

\begin{prop}[Theorem~\ref{t_x6}]
  Let $Q$ be a quiver of type $X_6$ with a single arrow from $v_0$ to $v_5$, where $v_5$ is a leaf (see Fig.~\ref{x7}). Then a vector $\boldsymbol{b}$ is admissible if and only if $b_5=-2b_0\ge 0$ and all other $b_i$ vanish. Admissible vectors for all representatives of the mutation class are shown in Fig.~\ref{pic-x6}.
\end{prop}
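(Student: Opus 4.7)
The proof splits into verifying sufficiency of the stated condition and then necessity. For sufficiency, I observe that when $b_0=-1$ and $b_5=2$ (with all other coordinates vanishing), the ice quiver obtained by attaching $q$ to $Q$ coincides, as a quiver, with $X_7$, the vertex $q$ playing the role of the seventh vertex, as already flagged in the paragraph preceding the statement. Since the mutation class of $X_7$ is finite, the mutation class of our ice quiver is a subset of it (we simply forbid mutation at $q$) and hence finite. For arbitrary $k\ge 1$, the coefficient-mutation formula is $\mathbb{Z}_{>0}$-homogeneous in $\bm{b}$: replacing $\bm{b}$ by $k\bm{b}$ and applying any mutation at a mutable vertex produces $k$ times the mutation-image of $\bm{b}$, so the mutation class of $(Q,k\bm{b})$ is in bijection with that of $(Q,\bm{b})$. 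Taking $\bm{b}=(-1,0,0,0,0,2)$ thus yields admissibility for every vector in the family $b_5=-2b_0\ge 0$.

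For necessity, I apply the annulus property (Proposition~\ref{prop-ann}) across the mutation class of $X_6$. The representative in Fig.~\ref{x7} contains double arrows between some of its mutable vertices, and each such double arrow immediately forces a relation $b_i+b_j=0$ with $b_i\le 0$ on the corresponding coordinates, thereby eliminating or tying together several of them. I then pick one or two elements of the cluster modular group of $X_6$ (available via~\cite{KG}), or equivalently short explicit mutation sequences, to reach further representatives carrying double arrows between other pairs of mutable vertices; after updating $\bm{b}$ under the intermediate coefficient mutations, the annulus property in the new quiver yields additional linear constraints on the original $\bm{b}$. Taken together, these constraints cut the space of admissible vectors down to the one-parameter family in the statement, with the sign condition $b_5\ge 0$ arising from the inequality part of the annulus property at a suitable representative. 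Admissible vectors for the remaining representatives of the mutation class (Fig.~\ref{pic-x6}) are then obtained by transporting $\bm{b}$ along the corresponding mutation sequences.

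The main obstacle is the appearance of the factor $2$ in $b_5=-2b_0$, since a single annulus relation only produces ratios $\pm 1$. This factor must emerge through the coefficient-mutation formula: after mutating at a vertex whose local geometry forces a multiplicity-two contribution to $v_5$, the updated coordinate at $v_5$ picks up $b_0$ with a factor of $2$, and equating the result with the annulus relation that must hold in the mutated quiver delivers $b_5=-2b_0$. Carrying out this tracking explicitly for a well-chosen short mutation sequence is the key calculation; the small size of the $X_6$ mutation class, already enumerated in Fig.~\ref{pic-x6}, reduces the remaining verifications to a finite and routine case analysis.
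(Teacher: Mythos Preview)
Your sufficiency argument is correct and in fact cleaner than the paper's. The paper does not use the $X_7$-plus-homogeneity trick; instead it checks that each generator of the cluster modular group of $X_6$ (taken from~\cite{I}) fixes $\bm b$, and then invokes the finiteness of the mutation class modulo this group, as in Theorem~\ref{t_af_e}. Your route avoids importing the cluster modular group entirely: the positive homogeneity of the coefficient mutation rule (which holds because the principal part is untouched and $[\lambda x]_+=\lambda[x]_+$, $\sgn(\lambda x)=\sgn(x)$ for $\lambda>0$) reduces everything to the single vector $(-1,0,0,0,0,2)$, where the ice quiver is literally $X_7$. This is a genuine simplification.

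For necessity, your outline matches the paper's strategy, but you should be aware that it does not unwind as smoothly as you suggest. After imposing the two visible annulus relations and the two further relations coming from $\mu_0\mu_1\mu_2$ and $\mu_0\mu_3\mu_4$ (these are inherited from the $X_7$ computation and already contain the factor~$2$ you are looking for: $b_4+b_5+2b_0=0$ and $b_2+b_5+2b_0=0$), one still has a two-parameter family. The paper then mutates at $v_5$ first and repeats the same two sequences; because $[b_5]_+$ appears, the resulting equalities are only piecewise linear, and one is forced into a case split on the sign of $b_5$. In the case $b_5\le 0$ a spurious one-parameter family $\bm b=(0,-\beta,\beta,-\beta,\beta,-\beta)$ survives all equalities, and a \emph{further} mutation $\mu_0\mu_1\mu_2$ producing a new double arrow $v_4v_5$ is needed to kill it via the annulus inequality. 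So ``a well-chosen short mutation sequence'' is not quite enough: two rounds of the $X_7$-type sequences, a sign dichotomy, and one extra elimination step are required. None of this is hard, but it is where the actual work lies, and your sketch does not yet account for it.
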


In the next proposition we consider the quivers of rank $2$ (note that the first two parts have been already considered previously).  

\begin{prop}[Theorem~\ref{t_rk2}]
  Let $Q$ be a rank two quiver with the arrow from $v_1$ to $v_2$ of weight $a>0$. Let $\b=(b_1,b_2)$ be an integer vector. Then
  \begin{itemize}
  \item[(1)] if $a=1$ then $\b$ is admissible for any $b_1,b_2$; 
  \item[(2)] if $a=2$ then $\b$ is admissible if and only if $b_1=-b_2\le 0$;
  \item[(3)] if $a>2$ then there are no admissible vectors. 
    
  \end{itemize}

\end{prop}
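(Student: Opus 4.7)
For part (1), $Q$ is of finite type $A_2$, so by the finite-type proposition above every vector $\b$ is admissible. For part (2), $Q$ is the Kronecker quiver, which contains a double arrow; Proposition~\ref{prop-ann} yields the necessary condition $b_1=-b_2\le 0$. Conversely, the Kronecker quiver has affine type $\t A_{1,1}$, and by the result for $\t A$ (Lemma~\ref{l_ann} together with Remark~\ref{a-any}) the annulus property is also sufficient.

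For part (3), fix $a\ge 3$ and a nonzero integer vector $\b=(b_1,b_2)$; the plan is to show that iterated mutation produces infinitely many distinct extended matrices. Let $\epsilon=\pm 1$ record the direction of the arrow between $v_1$ and $v_2$, with $\epsilon=+1$ meaning $v_1\to v_2$. Expanding the mutation rule, $\mu_k$ negates $c_k$, flips $\epsilon$, and adds $ac_k$ to the other coefficient in a sign-dependent subregion (leaving it unchanged otherwise). Consider the cone
\begin{equation*}
C=\bigl\{(c_1,c_2):\epsilon=+1,\ c_1<0,\ c_2+ac_1<0\bigr\},
\end{equation*}
and set $T=\mu_2\mu_1$. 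A direct computation shows that on $C$ the map $T$ acts linearly via
\begin{equation*}
M=\begin{pmatrix}a^2-1 & a\\ -a & -1\end{pmatrix},\qquad \det M=1,\ \tr M=a^2-2.
\end{equation*}
For $a\ge 3$ the eigenvalues $\lambda_\pm=(a^2-2\pm a\sqrt{a^2-4})/2$ are real, irrational, and satisfy $\lambda_+>1>\lambda_->0$, so no nonzero integer vector lies on either eigenspace.

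The argument concludes in two steps. First, a short case analysis on the sign pattern of $\b$, exploiting the symmetries $v_1\leftrightarrow v_2$ and $\b\mapsto-\b$, shows that a preparatory mutation sequence of length at most four places the coefficient vector into $C$; the degenerate case $(b_1,0)$ with $b_1>0$, $\epsilon=+1$ is representative, landing at $(-(a^2-1)b_1,ab_1)\in C$ after $\mu_2\mu_1\mu_2\mu_1$. Second, since the $\lambda_+$-eigenline has slope $a/(\lambda_++1)$ strictly less than $a$ (the slope bounding $C$), the $T$-orbit of any point in $C$ off the eigenlines remains in $C$ and converges in direction to the $\lambda_+$-eigenline, so its Euclidean norm grows like $\lambda_+^n$ and produces infinitely many distinct integer coefficient vectors. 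The main obstacle is the combination of the forward-invariance verification for $C$ and the sign-pattern case analysis for entry into $C$; once both are settled, the exponential growth under $\lambda_+$ closes the argument.
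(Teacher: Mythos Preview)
Your treatment of parts (1) and (2) is correct and matches the paper's (terse) argument.

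For part (3), however, the forward-invariance claim is false. Take $a=3$ and $(c_1,c_2)=(-3,8)$: this lies in $C$ since $c_1<0$ and $c_2+3c_1=-1<0$, yet
\[
M\begin{pmatrix}-3\\8\end{pmatrix}=\begin{pmatrix}8\cdot(-3)+3\cdot 8\\-3\cdot(-3)-8\end{pmatrix}=\begin{pmatrix}0\\1\end{pmatrix},
\]
which is not in $C$. The reason your slope comparison fails is that \emph{both} eigenlines of $M$ (not just the expanding one) lie in the interior of $C$: for $a=3$ the contracting eigendirection has slope $-(3+\sqrt{5})/2\approx -2.618$, strictly between the boundary slope $-3$ and the expanding slope. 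Points of $C$ on the far side of the $\lambda_-$-eigenline (such as $(-3,8)$) are in a different $M$-invariant sector and get pushed across $c_1=0$. Once the orbit leaves $C$ the piecewise-linear map $T$ no longer agrees with $M$, so the eigenvalue growth argument collapses. The fix---restricting to the genuine $M$-invariant sector between the two eigenlines---is possible, but then your ``entry in at most four mutations'' step becomes substantially harder, since that sector is much narrower than $C$.

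By contrast, the paper's proof of (3) is a two-line monotonicity argument with no spectral analysis at all: after at most two mutations (and relabeling) one arranges $b_2\ge 0\ge b_1$ with $|b_1|\ge |b_2|$; then a single mutation $\mu_1$ gives $b_2'=b_2+ab_1<b_2+2b_1\le b_1$, so $|b_2'|>|b_1|\ge |b_2|$, and after swapping labels the hypotheses are restored. Iterating yields unbounded coefficients directly. You may want to replace your eigenvalue machinery with this argument.
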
  

Finally, we specify a particular triangulation for every surface and give the admissibility criterion for the corresponding quiver, see Theorem~\ref{t_standard}. The criterion is also based on  the annulus property.

The results are extended to the general skew-symmetrizable case in Section~\ref{symmetrizable} by using diagrams in place of quivers and orbifolds in place of surfaces. The modified annulus property for arrows of weight $(1,4)$ is defined in Theorem~\ref{t_af_double4}.

\medskip
We now combine the results in one theorem.

\begin{theorem} Let $Q$ be a quiver/diagram and $\b=(b_1,\dots,b_n)$ be an integer vector.  
  \begin{itemize}
  \item[(1)] if $Q$ is of finite type then  any vector $\b$ is  admissible;
  \item[(2)] if $Q$ is of affine type and $Q$ contains a double arrow or an arrow of weight $(1,4)$, then a vector $\b$ is admissible if and only if $\b$  satisfies the annulus property; 
  \item[(3)] if $Q$ is arising from a surface/orbifold then  $\b$ is admissible if and only if it corresponds to some peripheral lamination; the criterion for admissibility is given for a specific representative of the mutation class  in Theorems~\ref{t_standard} and~\ref{t_standard_orb};
  \item[(4)] if $Q$ is of type $X_6$,  all possible admissible vectors are shown in Fig.~\ref{pic-x6};
  \item[(5)] otherwise, there is no admissible vector.  

  \end{itemize}  

\end{theorem}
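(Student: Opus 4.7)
The plan is to assemble the statement from the preceding results. First I would check that the five cases exhaust the classification of mutation-finite quivers and diagrams. By~\cite{FeSTu1,FeSTu2}, every mutation-finite $Q$ without frozen vertices belongs to at least one of the following overlapping classes: rank two, finite type, affine type, arising from a surface or orbifold, extended affine type $E_j^{(1,1)}$ for $j=6,7,8$, or the exceptional type $X_6$ or $X_7$. Finite type falls into case~(1); the surface/orbifold class is case~(3), which also absorbs the affine type $\t A$ via its annulus realisation; every remaining affine mutation class admits a representative containing a double arrow (or, in the skew-symmetrizable setting, an arrow of weight $(1,4)$), landing in case~(2); the class $X_6$ is case~(4); and the extended affine classes $E_6^{(1,1)}$, $E_7^{(1,1)}$, $E_8^{(1,1)}$ together with $X_7$ form case~(5). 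The rank-two case of weight $a$ further disperses: $a=1$ is finite type $A_2$ (case~(1)), $a=2$ is affine $\t A_1$ with a double arrow (case~(2)), and $a\ge 3$ lies in case~(5).

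Second, in each bin I would invoke the corresponding earlier result: case~(1) from the cited proposition of Fomin--Zelevinsky; case~(2) from Theorem~\ref{aff_gen}, extended by Theorem~\ref{t_af_double4} for the weight $(1,4)$ situation; case~(3) from Theorem~\ref{t_per} together with its orbifold analogue, made explicit by Theorems~\ref{t_standard} and~\ref{t_standard_orb}; case~(4) from Theorem~\ref{t_x6}; and case~(5) from Theorems~\ref{t_e6_11}--\ref{t_x7} together with the $a\ge 3$ clause of Theorem~\ref{t_rk2}.

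The main bookkeeping step is to verify consistency on the overlaps of the underlying classification, since a single mutation class may appear in several of the classes listed above. The most substantive overlap is between affine type $\t A_n$ (which, after mutation to a double-arrow representative, would formally fall under case~(2)) and the annulus surface class (case~(3)); here I would observe that on such a representative the annulus property coincides with peripherality of the corresponding lamination on the annulus, essentially by Lemma~\ref{l_ann} and Remark~\ref{a-any}. Overlaps involving finite type are vacuous because case~(1) imposes no restriction on $\b$, and overlaps involving $X_6$, $X_7$ or the extended affine types reduce to a direct comparison of the resulting (empty or one-parameter) sets of admissible vectors. I expect this compatibility check to be the only non-trivial step; the rest of the proof is assembly and citation.
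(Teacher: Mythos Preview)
Your proposal is correct and matches the paper's approach exactly: the theorem is stated in the introduction as a summary (``We now combine the results in one theorem''), with no separate proof, so the intended argument is precisely the assembly-and-citation you describe. The only small bookkeeping omission is in case~(5): in the skew-symmetrizable setting you should also cite the theorem in Section~\ref{symmetrizable} ruling out the extended affine diagrams $G_2^{(*,+)}$, $G_2^{(*,*)}$, $F_4^{(*,+)}$, $F_4^{(*,*)}$, and part~(3) of Theorem~\ref{t_rk2_} for rank-two diagrams of weight greater than~$4$.
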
  

A criterion for being mutation-finite can be also reformulated in terms of the annulus property applied to the whole mutation class, this was proposed by Sergey Fomin. The statement is similar to the analogous criterion for quivers/diagrams without frozen vertices (see e.g.~\cite[Corollary 8]{DO}).

\begin{theorem}[Theorem~\ref{cr3}]
  Let $Q$ be a quiver/diagram with a frozen vertex $v$. Suppose that the subquiver $Q\setminus v$ is mutation-finite. 
  Then $Q$ is mutation-finite if and only if the annulus property holds in every quiver/diagram $Q'$ mutation-equivalent to $Q$ for every double arrow contained in $Q'\setminus v$.
 
\end{theorem}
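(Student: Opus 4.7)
The plan is to prove both directions by leveraging the classification assembled in the Main Theorem. The ``only if'' direction is immediate from Proposition~\ref{prop-ann}: if $Q$ is mutation-finite, then every $Q'$ in its mutation class is also mutation-finite, so applying Proposition~\ref{prop-ann} to each such $Q'$ yields the annulus property at every double arrow of $Q'\setminus v$.

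For the ``if'' direction I would split into cases according to the mutation type of $Q\setminus v$ (which is mutation-finite by assumption, hence falls into one of the known classes). If $Q\setminus v$ is of finite type, the proposition immediately following Definition~\ref{def_b} shows that every coefficient vector is admissible, so $Q$ is automatically mutation-finite. If $Q\setminus v$ is affine and its mutation class contains a double arrow (or, in the skew-symmetrizable setting, a weight $(1,4)$ arrow), Theorems~\ref{aff_gen} and~\ref{t_af_double4} reduce admissibility to the annulus property at that arrow in a suitable representative $Q'$; mutating $Q$ to $Q'$ and applying the hypothesis at $Q'$ yields admissibility of the corresponding coefficient vector, hence mutation-finiteness of $Q$. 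The rank~$2$ and $X_6$ cases follow analogously from Theorems~\ref{t_rk2} and~\ref{t_x6}.

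The cases that require more than a direct citation are the extended affine types $E_6^{1,1}$, $E_7^{1,1}$, $E_8^{1,1}$, the exceptional type $X_7$, and the surface/orbifold case. For the first group, Theorems~\ref{t_e6_11}--\ref{t_x7} show that no nonzero $\b$ is admissible, and their proofs already exhibit an explicit element $\varphi$ of the cluster modular group for which the annulus properties at a chosen representative and at its image $\varphi(Q)$ are incompatible whenever $\b\ne 0$; thus the hypothesis forces $\b=0$, in which case $Q$ is trivially mutation-finite. For the surface/orbifold case I would argue by contrapositive via Proposition~\ref{p_t_per}: if the lamination $\ell$ associated to $Q$ is not peripheral, then one can flip to a triangulation whose ice quiver $Q'$ contains a double arrow in $Q'\setminus v$ along which $\ell$ winds non-peripherally, and one verifies that this produces a violation of the annulus property at $Q'$. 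The main obstacle is this last reduction: one needs to guarantee that the global failure of peripherality can always be detected locally by a flip-equivalent triangulation carrying the appropriate double arrow, which is where the explicit dictionary between peripheral laminations and admissible coefficient vectors developed in Section~\ref{q-surfaces} is essential.
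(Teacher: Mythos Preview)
Your case-by-case skeleton matches the paper's proof exactly, and your handling of the finite, affine, rank~2, extended-affine, $X_7$, and $X_6$ cases is essentially what the paper does (for $X_6$ the precise reference is Remark~\ref{x6-ann}, which records that the proof of Theorem~\ref{t_x6} uses only annulus-property violations).

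The one substantive divergence is in the surface case. You correctly isolate the obstacle---given a non-peripheral lamination, one must exhibit a triangulation in the same flip class carrying a double arrow at which the annulus property fails---but your proposed tool, the dictionary of Section~\ref{q-surfaces}, is not what the paper uses and would not close the gap directly. Theorem~\ref{t_standard} characterizes admissibility for the \emph{standard} triangulation by three conditions (a1)--(a3), of which only (a2) is the annulus property; a non-peripheral lamination may fail (a1) or (a3) while the standard triangulation carries no double arrow at which (a2) is violated (e.g.\ on punctured disks the standard triangulation may have no double arrows at all). Turning a failure of (a1) or (a3) into an annulus-property violation elsewhere would require further argument.

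The paper's route is instead purely geometric: since the pure mapping class group is generated by Dehn twists along finitely many simple closed curves (Humphries generators for $g\ge 2$, with small modifications for $g\le 1$), a non-peripheral lamination $L$ must cross one of these generating curves $C$. One then takes a marked point on each side of $C$, builds two loops $\gamma_1,\gamma_2$ tracing $C$ from those points, completes to a triangulation containing $\gamma_1,\gamma_2$, and observes that the restriction of $L$ to the annulus bounded by $\gamma_1\cup\gamma_2$ is bridging; Lemma~\ref{l_ann} then gives the desired violation of the annulus property.
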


From this one can conclude the following.

 \begin{cor}[Corollary~\ref{cor_cr}]
Let $Q$ be a quiver/diagram with a frozen vertex. Then
 $Q$ is mutation-finite if and only if for every quiver $Q'$ in the mutation class of $Q$ every rank $3$ subquiver/subdiagram of $Q'$ is mutation-finite.

 \end{cor}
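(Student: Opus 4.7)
The plan is to deduce the corollary from Theorem~\ref{cr3} combined with the analogous coefficient-free criterion \cite[Corollary~8]{DO} and the rank $2$ admissibility classifications of Theorem~\ref{t_rk2} (together with Theorem~\ref{t_af_double4} in the non-skew-symmetric case).

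The key elementary tool is that mutation commutes with restriction to a subquiver, provided one mutates only at vertices of that subquiver. For a rank $3$ subquiver $S$ of some $Q'$ and a vertex $a\in S$, the exchange formula shows that the change of the arrow between the two other vertices of $S$ depends only on arrows internal to $S$; hence $\mu_a(Q')|_S = \mu_a(Q'|_S)$, and iteratively the mutation class of $S$ is exactly the collection of restrictions to the three vertices of $S$ of quivers obtained from $Q'$ by sequences of mutations at vertices of $S$. Thus if $Q$ is mutation-finite, then for any $Q'\in\mathrm{mut}(Q)$ and any rank $3$ subquiver $S$, the mutation class of $S$ is contained in the finite set of restrictions of $\mathrm{mut}(Q)$ to three vertices, and the forward direction is immediate.

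For the reverse direction, assume every rank $3$ subquiver of every $Q'\in\mathrm{mut}(Q)$ is mutation-finite. Applying the commutation observation to mutations at mutable vertices identifies $\mathrm{mut}(Q\setminus v)$ with $\{Q'\setminus v : Q'\in\mathrm{mut}(Q)\}$, so every rank $3$ subquiver of every quiver in $\mathrm{mut}(Q\setminus v)$ is mutation-finite; then \cite[Corollary~8]{DO} yields that $Q\setminus v$ is mutation-finite, making Theorem~\ref{cr3} applicable. It now suffices to verify the annulus property in every $Q'\in\mathrm{mut}(Q)$ for every double arrow (respectively arrow of weight $(1,4)$) contained in $Q'\setminus v$. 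If such an arrow runs from $v_1$ to $v_2$, then the rank $3$ subquiver on $\{v_1,v_2,v\}$ is mutation-finite by hypothesis, and Theorem~\ref{t_rk2}(2) (respectively Theorem~\ref{t_af_double4}) forces $b_1=-b_2\le 0$, which is precisely the annulus property.

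The main delicacy lies in the commutation-with-restriction step that underpins both directions: one must restrict attention to mutations at vertices inside $S$, since the analogous statement is false if vertices outside $S$ are mutated. Once this is in place, the corollary is a clean assembly of Theorem~\ref{cr3}, \cite[Corollary~8]{DO}, and the rank $2$ admissibility results.
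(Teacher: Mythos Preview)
Your proof is correct and follows essentially the same approach as the paper: the forward direction is the standard fact that subquivers of mutation-finite quivers are mutation-finite (which the paper simply calls ``evident''), and the reverse direction is the contrapositive of the paper's argument, reducing to mutation-finiteness of $Q\setminus v$ and then invoking Theorem~\ref{cr3} together with the rank~$2$ classification. One small remark: for the $(1,4)$ arrow the relevant rank~$2$ input is Theorem~\ref{t_rk2_}(2) rather than Theorem~\ref{t_af_double4}, though the resulting condition is of course the same.
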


\medskip
\noindent
The paper is organized as follows. In Section~\ref{background} we recall necessary background concerning triangulated surfaces and laminations on them.  Section~\ref{surfaces} is devoted to quivers from surfaces and the connection between admissible vectors and peripheral laminations. In Section~\ref{aff} we consider quivers of affine types, in Sections~\ref{extended} and~\ref{x} we treat extended affine  quivers and quivers of types $X_6$ and $X_7$. In the short Section~\ref{rk2} we consider quivers of rank 2. Section~\ref{q-surfaces} characterizes admissible vectors for a particular triangulation of a surface. In  Section~\ref{symmetrizable} all results are extended to the general context of skew-symmetrizable mutation classes. Finally, in Section~\ref{sec_cr} we discuss the criterion of mutation-finiteness in terms of the annulus property.

\subsection*{Acknowledgements}
We would like to thank Sergey Fomin for the question inspiring the current project and for helpful suggestions, Michael Shapiro for stimulating discussions, and Dani Kaufman for sharing then unpublished results of~\cite{KG} with us. We are grateful to the anonymous referee for useful remarks. A substantial part of the paper was written at the Isaac Newton Institute for Mathematical Sciences, Cambridge; we are grateful to the organizers of the program ``Cluster algebras and representation theory'', and to the Institute for support and hospitality during the program; this work was supported by EPSRC grant no EP/R014604/1.

\section{Background}
\label{background}

\subsection{Matrix mutation}

We start by reminding the definition of matrix mutation (we adopt the notation from~\cite{FZ4}).

Given an integer skew-symmetric $n\times n$ matrix $B=(b_{ij})$, the mutation $\mu_k$ of $B$ for $k\in \{ 1,\dots,n\}$ is defined by $\mu_k(B)=B'=(b_{ij}')$ where
$$
b_{ij}'=
\begin{cases}
  -b_{ij}& \text{if $i=k$ or $j=k$}\\
  b_{ij}+\sgn(b_{ik})[b_{ik}b_{kj}]_+& \text{otherwise,}\\
\end{cases}  
$$
where $\sgn(x)$ denotes the sign function and 
$[x]_+=\max\{x,0 \}.$

For an extended $m\times n$ matrix $\overline B$ with $m>n$ and skew-symmetric principal part given by first $n$ rows, the mutation is provided by the same formula.

\begin{remark}
A skew-symmetric $n\times n$ matrix $B=(b_{ij})$ can be represented by a quiver with $n$ vertices $v_1,\dots,v_n$ and $b_{ij}$ arrows from $v_i$ to $v_j$. Matrix mutation then can be reformulated in the quiver language, see Fig.~\ref{mut}.  

\end{remark}  

\begin{figure}[!h]
\begin{center}
  \epsfig{file=./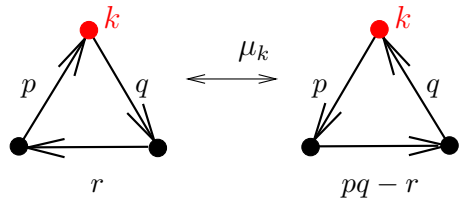,width=0.375\linewidth}
            \put(-166,26){\small $p$}
                \put(-56,26){\small $p$}
                 \put(-123,26){\small $q$}
                \put(-13,26){\small $q$}
                \put(-45,-12){\small $pq - r$}
                \put(-140,-12){\small $ r $}
                \put(-88,40){ $\mu_k$}
                \put(-135,50){\color{red} $k$}
                \put(-25,50){\color{red} $k$}     
\caption{Quiver mutation. Here $p,q$ are positive, and the sign of $r$ and $pq-r$ can be negative (which corresponds to opposite direction of the respective arrows).}
\label{mut}
\end{center}
\end{figure}

\subsection{Construction of quivers from triangulations}

We briefly recall the construction of quivers from triangulated surfaces~\cite{FST}.

Let $S$ be a connected orientable surface with boundary and with a finite set $M$ of marked points (such that every boundary component contains at least one marked point). Let $T$ be a triangulation of $S$ by the arcs having their endpoints in $M$.
Suppose that $T$ has no self-folded triangles (i.e. every triangle in $T$ is bounded by three distinct arcs or boundary segments). 
We construct a quiver $Q$ whose vertices $v_1,\dots, v_n$ correspond to the arcs $e_1,\dots,e_n$ of $T$.
The number of arrows in $Q$ from $v_i$ to $v_j$ is defined as

{\small
\begin{multline}b_{ij}=\#\{\text{triangles with sides $e_i$ and $e_j$, with $e_j$ following $e_i$ in clockwise order} \}-\\
  \#\{\text{triangles with sides $e_i$ and $e_j$, with $e_j$ following $e_i$ in counterclockwise order} \}.
\end{multline}
}

For more subtle rules for treating self-folded triangles see~\cite{FST}.

It is shown in~\cite{FST} that mutations of the quiver $Q$ correspond to flips of the triangulation $T$. It is easy to see from the definition above that combinatorially equivalent triangulations of $S$ give rise to isomorphic quivers (we say that triangulations are combinatorially equivalent if one can be taken to the other by an orientation-preserving homeomorphism of the surface). As it is shown in~\cite{Gu1}, a triangulation of a surface can be uniquely reconstructed from the corresponding quiver (up to finitely many low rank examples).

\subsection{Classification of mutation-finite quivers} 

We will heavily use the following.

\begin{theorem}[\cite{FeSTu1}]
  A connected mutation-finite quiver is either of rank 2, or a quiver arising from a triangulation of a surface, or
  a quiver mutation-equivalent to one of the eleven quivers $E_6$, $E_7$, $E_8$,  $\widetilde E_6$,  $\widetilde E_7$,  $\widetilde E_8$,  $E^{(1,1)}_6$, $E^{(1,1)}_7 $, $E^{(1,1)}_8$, $X_6$, $X_7$
 shown on Fig.~\ref{mut_fin}.
  
\end{theorem}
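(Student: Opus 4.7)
The plan is to combine a strong local constraint with a structural dichotomy: either the quiver $Q$ comes from a triangulated surface (and is thereby automatically mutation-finite), or $Q$ has bounded rank and falls into a finite list that can be enumerated directly.

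The first step is to observe that mutation-finiteness passes to induced subquivers, since deleting a vertex $v$ commutes with mutations performed at vertices other than $v$. In particular, every rank~$3$ induced subquiver of a mutation-finite $Q$ must itself be mutation-finite. I would then classify mutation-finite rank~$3$ quivers by direct computation: an acyclic rank~$3$ quiver with weights $(a,b,c)$ is mutation-finite iff the underlying weighted graph is of finite or affine Cartan--Killing type, while a cyclic rank~$3$ quiver $(a,b,c)$ is controlled by the Markov-type invariant $a^{2}+b^{2}+c^{2}-abc$ and is mutation-finite only for $(1,1,1)$ (oriented triangle, flipping between cycle orientations) and $(2,2,2)$ (the Markov quiver). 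The upshot is a very strong local constraint: in a mutation-finite $Q$ no arrow has weight $\ge 3$, and double arrows can appear only inside a handful of permitted local configurations.

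The second step is the Fomin--Shapiro--Thurston block decomposition. A quiver is called block-decomposable if it assembles from a short list of five elementary blocks by identifying outlets, and such quivers are precisely those arising from triangulations of bordered surfaces; they are mutation-finite because mutations correspond to flips of arcs. One then proves the converse (the harder direction): any mutation-finite quiver of rank greater than a small explicit bound is block-decomposable. The idea is an induction on the number of vertices, using the rank~$3$ constraints to show that a mutation-finite $Q$ of large enough rank must contain a block-decomposable substructure which, thanks to the local restrictions, extends uniquely to a block decomposition of all of~$Q$.

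For quivers of rank below this bound one then carries out a finite, computer-assisted enumeration of mutation classes: starting from the allowed rank~$3$ pieces, extend by one vertex at a time and check whether the resulting mutation class is finite. This enumeration isolates exactly the eleven sporadic classes $E_6, E_7, E_8, \widetilde E_6, \widetilde E_7, \widetilde E_8, E_6^{(1,1)}, E_7^{(1,1)}, E_8^{(1,1)}, X_6, X_7$, and the rank~$2$ case is handled separately and trivially. The main obstacle is the induction step showing that non-block-decomposable behaviour cannot persist as the rank grows; establishing the rank bound requires patient combinatorial control of how double arrows and oriented cycles propagate under repeated mutations, and this is where the bulk of the technical work lies. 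A secondary obstacle is verifying, for each sporadic candidate, that its mutation class is genuinely finite, which is done by direct (finite) computation of the mutation orbit.
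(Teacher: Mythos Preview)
This theorem is not proved in the paper under review: it is quoted verbatim from \cite{FeSTu1} as background (see Section~\ref{background}), and no argument is given here. So there is no ``paper's own proof'' to compare your proposal against.

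That said, your outline is a faithful high-level summary of the strategy actually used in \cite{FeSTu1}: hereditary mutation-finiteness for subquivers, a rank~$3$ local analysis yielding the bound on arrow multiplicities, the Fomin--Shapiro--Thurston block decomposition to identify the surface case, and a computer-assisted enumeration to pin down the eleven exceptional classes below the rank bound. One small correction: your list of mutation-finite oriented $3$-cycles is incomplete. The cycle $(1,1,2)$ is also mutation-finite --- it is the quiver of an annulus of type $\widetilde A_{1,2}$, and a single mutation takes it to an acyclic triangle --- so the cyclic cases are $(1,1,1)$, $(1,1,2)$, $(2,2,2)$, not just the first and last. (Your Markov-type quantity $a^2+b^2+c^2-abc$ equals $4$ for both $(1,1,2)$ and $(2,2,2)$, which is consistent with this.) This does not affect the overall architecture of the argument, but the rank~$3$ classification feeds directly into the inductive step, so it is worth getting right.
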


\begin{figure}[!h]
\begin{center}
  \epsfig{file=./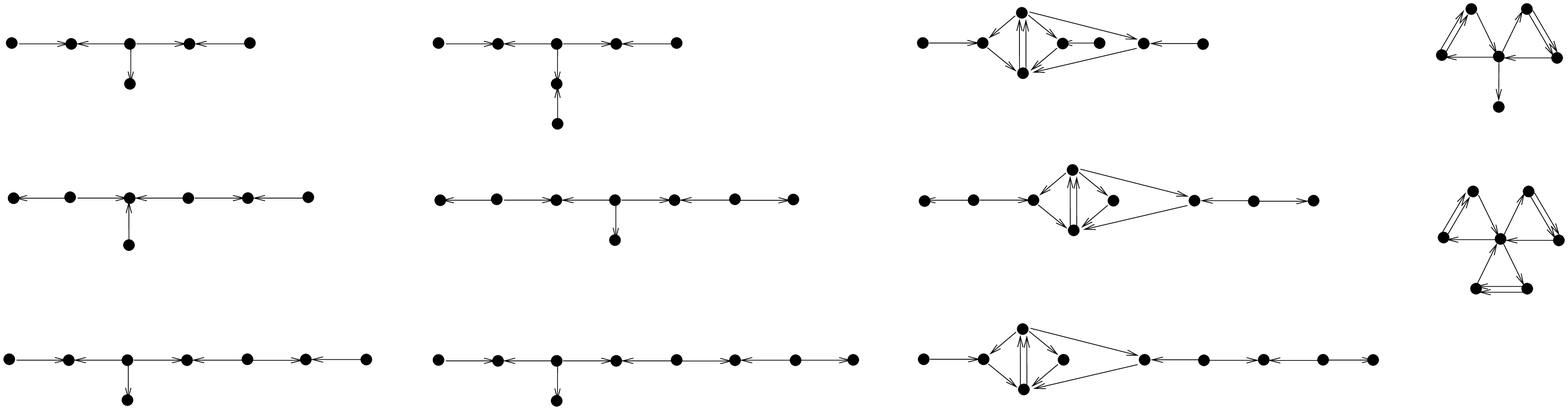,width=0.99\linewidth}
  \put(-453,82){ $E_6$}
  \put(-453,38){ $E_7$}
  \put(-453,-7){ $E_8$}
  \put(-330,82){ $\widetilde E_6$}
  \put(-330,38){ $\widetilde E_7$}
  \put(-330,-7){ $\widetilde E_8$}
  \put(-189,82){\small  $\widetilde E_6^{(1,1)}$}
  \put(-189,38){\small  $\widetilde E_7^{(1,1)}$}
  \put(-189,-7){\small  $\widetilde E_8^{(1,1)}$}
  \put(-45,77){$X_6$}
  \put(-45,27){$X_7$}
\caption{Eleven exceptional finite mutation classes}
\label{mut_fin}
\end{center}
\end{figure}

\subsection{Laminations as coefficients for surface case}
\label{lam}

It is shown in~\cite{FT} that in the case of a quiver from triangulated surface $S$, the coefficient vectors can be represented by {\it laminations} on the same surface $S$, and that the coefficient vectors can be computed from the triangulation and lamination using {\it shear coordinates}.

\begin{definition}[\cite{FT}, Def.~12.1]
An {\it integral unbounded measured lamination}, or just a {\it lamination} for short, on a marked surface $(S,M)$ is a finite collection of non-self-intersecting and pairwise non-intersecting curves in $S$, modulo isotopy
relative to $M$, subject to the restrictions specified below. Each curve must be one
of the following:
\begin{itemize}
\item[-]  a closed curve (an embedded circle);
\item[-] a curve connecting two unmarked points on the boundary of $S$;
\item[-] a curve starting at an unmarked point on the boundary and, at its other
end, spiralling into a puncture (either clockwise or counterclockwise); 
\item[-]  a curve both of whose ends spiral into punctures (not necessarily distinct);
\end{itemize}
where the following types of curves are not allowed:
\begin{itemize}
\item[-] a curve that bounds an unpunctured or once-punctured disk;
\item[-] a curve with two endpoints on the boundary of $S$ which is isotopic to a
piece of boundary containing no marked points, or a single marked point;
\item[-] a curve with two ends spiralling into the same puncture in the same direction without enclosing anything else.
\end{itemize}

\end{definition}

When speaking about two curves $\gamma_1$ and $\gamma_2$ (for example an arc of triangulation and a curve from a lamination) we always assume that the number of crossings is minimal possible for the curves in the homotopy classes of 
 $\gamma_1$ and $\gamma_2$ respectively. 

\begin{definition}[\cite{FT}, Def.~12.2]
  Let $L$ be a lamination and let $T$ be a triangulation without self-folded triangles of the same surface. For each
  arc $\gamma$ in $T$, the corresponding {\it shear coordinate} of $L$ with respect to the triangulation $T$, denoted by
  $b_\gamma (T,L)$, is defined as a sum of contributions from all intersections of curves in $L$ with the arc $\gamma$. Specifically, such an intersection contributes $+1$
(resp., $-1$) to $b_\gamma (T,L)$ if the corresponding segment of a curve in $L$ cuts through
the quadrilateral surrounding $\gamma$ cutting through edges  as shown in Fig.~\ref{shear} on the left (resp., on the right). Note
that at most one of these two types of intersection can occur. Note also that even
though a spiralling curve can intersect an arc infinitely many times, the number of
intersections that contribute to the computation of $b_\gamma (T,L)$ is always finite.

Shear coordinates can also be defined for arcs involved in self-folded triangles, see~\cite[Section~13]{FT}.
\end{definition}

Note that   the vector $\b=(b_1,\dots,b_n)$ from Definition~\ref{def_b} consists of negative shear coordinates of the corresponding lamination.

\begin{figure}[!h]
\begin{center}
  \epsfig{file=./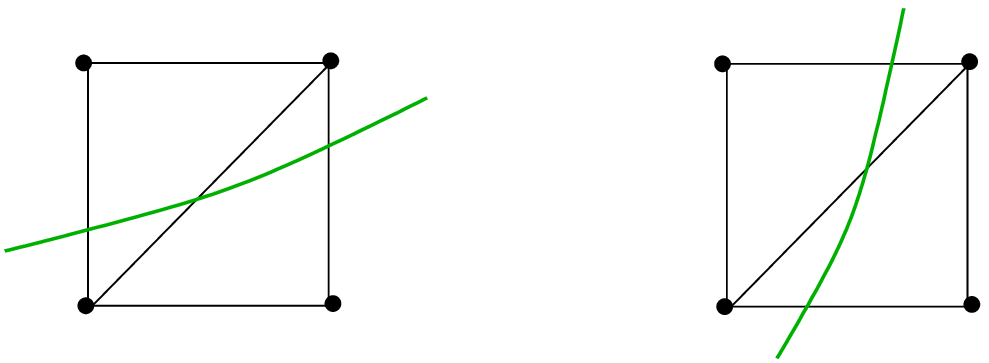,width=0.5\linewidth}
  \put(-233,33){ $+1$}
  \put(-23,83){ $-1$}
  \put(-53,27){\small $\gamma$}
  \put(-173,57){\small $\gamma$}
\caption{Shear coordinates.  }
\label{shear}
\end{center}
\end{figure}

It is known (see~\cite[Theorem~13.6]{FT}, see also~\cite[Section~3]{FG}) that 
 for a given triangulation $T$, the map
$L\mapsto (b_\gamma (T,L))_{\gamma\in T}$ 
provides a bijection between laminations and $\Z^n$.

In particular, for every triangulation $T$ and every arc $\gamma_0\in T$   there exists an {\it elementary lamination} $L$ such that  $b_{\gamma_0}(T,L)=1$ and $b_{\gamma_i}(T,L)=0$ for all $\gamma_i\in T$, $i\ne 0$. This elementary lamination consists of one curve which follows $\gamma_0$ but has its endpoints changed: for endpoints of $\gamma_0$ at a boundary marked point, the end of the elementary lamination is shifted to the left along the boundary, for endpoints of $\gamma_0$ at a puncture, the end of the elementary lamination is spiralling into the puncture anti-clockwise if the end is untagged and clockwise otherwise. We will also use {\it negative elementary lamination} defined by   $b_{\gamma_0}(T,L)=-1$ and $b_{\gamma_i}(T,L)=0$ for all $\gamma_i\in T$, $i\ne 0$. The negative elementary lamination also consists of one curve tracing the arc $\gamma_0$, but having boundary endpoints shifted to the right and the puncture end points spiralling to the puncture in the clockwise direction.

  \section{Quivers from surfaces and peripheral laminations}
  \label{surfaces}

  Let $Q$ be a quiver constructed by a triangulation $T$ of a surface $S$. As it was mentioned in Section~\ref{lam}, choosing a coefficient vector $\b$ is equivalent to a choice of a lamination $L$ on $S$. Since mutations correspond to flips of triangulations (and we can reach every triangulation by a sequence of flips), the vector $\b$ is admissible if and only if the shear coordinates of the lamination $L$ on all triangulations of $S$ take finitely many values only.

  \begin{definition}
    \label{per}
A curve on a marked surface $S$ will be called {\it peripheral} if it belongs to some lamination on $S$ and can be isotopically deformed to (a part of) a boundary component of $S$.
By a {\it peripheral lamination} we understand a lamination consisting of peripheral curves.
\end{definition}

In this section, we show that admissible vectors are in bijection with peripheral laminations (see Theorem~\ref{t_per}). In Section~\ref{q-surfaces} we will reformulate the result in terms of quivers.

\begin{theorem}
  \label{t_per}
  Let $Q$ be a quiver from a triangulated surface $S$. Then
admissible vectors for $Q$ are in bijection with  peripheral laminations on $S$.

\end{theorem}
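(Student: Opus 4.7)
The plan is to translate admissibility of $\b$ into a statement about shear coordinates and then characterize which laminations give bounded shear coordinates under all triangulations. Recall from Section~\ref{lam} that for any triangulation $T$ the map $L \mapsto (-b_\gamma(T,L))_{\gamma \in T}$ is a bijection between laminations on $S$ and $\Z^n$, sending the lamination $L$ associated with $\b$ to $\b$ itself; moreover, a flip of $T$ transforms shear coordinates of any fixed $L$ by the same rule as the mutation of the extended exchange matrix. Combined with the connectivity of the flip graph of $S$, this reduces the theorem to showing that the set of shear coordinate vectors $\{(b_\gamma(T',L))_{\gamma \in T'} : T' \text{ a triangulation of } S\}$ is finite if and only if $L$ is peripheral.

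For the direction ``peripheral $\Rightarrow$ admissible'', I would argue that each peripheral curve $c \in L$ contributes only boundedly to any shear coordinate. Since $c$ is isotopic in $S$ to a piece of $\partial S$, it may be placed inside an arbitrarily thin collar of $\partial S$, so only the ``boundary end'' of an arc $\gamma$ of a triangulation $T'$ can produce $S$-shape intersections of $\gamma$ with $c$. A local analysis of the quadrilateral surrounding $\gamma$ then shows that the number of such $S$-shape intersections is bounded by a constant depending only on the region of $S$ cut off by $c$, independently of $\gamma$ or $T'$. Summing over the finitely many components of $L$ yields a uniform bound $|b_\gamma(T',L)| \le C(L,S)$, so only finitely many integer coefficient vectors arise.

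For the converse ``non-peripheral $\Rightarrow$ not admissible'', I would construct, for each non-peripheral component $c \in L$, a sequence of triangulations under which shear coordinates at $c$ grow without bound. When $c$ is an essential closed curve not isotopic to any boundary component, I pick a triangulation $T_0$ containing an arc $\gamma_0$ with $i(c,\gamma_0) \ge 1$ and iterate the Dehn twist $\tau_c$: in $\tau_c^k(T_0)$ all new crossings of $\tau_c^k(\gamma_0)$ with $c$ form $S$-shapes of the same sign inside the quadrilateral surrounding $\tau_c^k(\gamma_0)$, and the corresponding shear coordinate grows linearly in $k$. When $c$ is a non-peripheral arc with boundary endpoints, the same argument goes through using a Dehn twist about an auxiliary simple closed curve that essentially intersects $c$; such an auxiliary curve exists precisely because $c$ is not peripheral. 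When $c$ spirals into a puncture $p$, the relevant mapping-class-group element is the rotation around $p$ (realized by a fixed finite sequence of flips of the arcs at $p$), and iterating it produces an unbounded number of same-sign $S$-shape intersections with the spiralling end of $c$.

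The main obstacle is the converse direction, and specifically the control of signs: growing intersection numbers do not automatically force growing shear coordinates, since individual $\pm 1$ contributions can cancel. The heart of the argument will be to produce, in each of the three cases (non-peripheral closed curve, non-peripheral boundary-to-boundary arc, spiralling arc), a local twist configuration in which every additional crossing is forced into the same orientation relative to the distinguished quadrilateral around the arc, so that the signed sum grows monotonically in absolute value. Verifying this requires a careful local picture near the twist locus, together with the observation that the twists employed are realized by sequences of flips and therefore correspond to genuine elements in the mutation class.
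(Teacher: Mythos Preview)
Your reduction to shear coordinates and the two-direction structure match the paper, but both halves of your argument take a harder route than the paper's, and in the converse direction you are working around an obstacle that the paper's proof avoids entirely.

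For the forward direction, the paper does not attempt any local bound on shear coordinates. Instead it observes that a peripheral lamination is invariant under every Dehn twist (each twist curve can be isotoped off the boundary collars), hence invariant under the full mapping class group of $S$. Since $S$ has only finitely many combinatorial types of triangulations, and any two triangulations of the same combinatorial type differ by a mapping class, the shear coordinate vector of a peripheral $L$ with respect to $T'$ depends only on the combinatorial type of $T'$. Finiteness follows at once. Your local-bound approach is plausible, but the asserted uniform bound ``depending only on the region of $S$ cut off by $c$, independently of $\gamma$ or $T'$'' is left unjustified, and making it precise is not entirely trivial.

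For the converse, the paper completely sidesteps the sign problem you flag as the main obstacle. The observation is that computing shear coordinates of $L$ with respect to $D_C^k(T)$ is the same as computing shear coordinates of $D_C^{-k}(L)$ with respect to the fixed triangulation $T$. If $L$ is non-peripheral, there is a simple closed curve $C$ meeting $L$ essentially, so the laminations $D_C^{-k}(L)$, $k\in\Z$, are pairwise distinct; by the bijectivity of shear coordinates (which you already recalled) their coordinate vectors with respect to $T$ are pairwise distinct. That is all: no growth estimate, no monotonicity of signs, no case split into closed curves, boundary-to-boundary arcs, and spiralling arcs. Your three-case analysis with controlled-sign twist configurations may be completable, but it is considerably more work and the ``careful local picture'' you defer to is exactly where the difficulty lies; the paper's argument shows it is unnecessary.
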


\begin{proof}
  We need to show that the vector of shear coordinates of a lamination $L$ takes finitely many values if and only if $L$ is peripheral.
  
First, consider a peripheral lamination $L$. It is preserved by any Dehn twist along any closed curve  on the surface, and hence, it is preserved by the whole mapping class group of the surface (as the latter is generated by twists).  

  Observe that for a given surface $S$ there is only a finite number of combinatorial types of triangulations (in particular, this is precisely the reason why quivers originating from surfaces are mutation-finite), and combinatorially equivalent triangulations can be taken to each other by elements of the mapping class group of $S$. This implies that given an initial triangulation $T$ and the corresponding quiver $Q$, there is a finite number of mutation sequences applying which together with elements of the mapping class group we can reach any triangulation of $S$. Since shear coordinates of $L$ are invariant under the action of the mapping class group, this implies that the vector of shear coordinates of $L$ takes one of finitely many values.

Now, consider a lamination $L$ which is not peripheral. Then there exists a closed curve $C$ crossing $L$. Let $T$ be a triangulation and $D_C^{k}(T)$, $k\in \Z$  be the images of $T$ under iterative applications of  Dehn twist $D_C$ along $C$. We claim that shear coordinates of $L$ with respect to  $D_C^{k}(T)$ take infinitely many different values.
Indeed, to apply  $D_C$ to $T$ with keeping $L$ intact is the same as applying  $D_C^{-1}$ to $L$  and preserving $T$.
As $C$ intersects $L$, the Dehn twists  $D_C^{-k}(L)$ will produce infinitely many different laminations.
Due to the bijection between laminations and their shear coordinates, this implies that the shear coordinates of laminations $D_C^{-k}(L)$ with respect to triangulation $T$ are different. Hence, the shear coordinates of $L$ with respect to  triangulations   $D_C^{k}(T)$ are different, and thus take infinitely many values. 
This implies that non-peripheral laminations do not correspond to admissible vectors.
\end{proof}  

Notice that if a surface has no boundary, then it contains no peripheral curves. This gives rise to the following corollary of Theorem~\ref{t_per}.

\begin{cor}
  \label{no_bdry}
If a surface has no boundary, then the quiver of any of its triangulations has no admissible vectors. 

  \end{cor}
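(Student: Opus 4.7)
The plan is to derive this as an immediate consequence of Theorem~\ref{t_per} together with the definitions of peripheral curve (Definition~\ref{per}) and admissible vector (Definition~\ref{def_b}). By Theorem~\ref{t_per}, admissible vectors for $Q$ are in bijection with peripheral laminations on $S$, so it suffices to show that when $S$ has empty boundary, the only peripheral lamination on $S$ is the empty one, and then to observe that the empty lamination corresponds to the zero vector, which is explicitly excluded from admissibility.

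The first step is to unpack Definition~\ref{per}: a curve is peripheral if it can be isotoped to a subset of a boundary component of $S$. If $\partial S=\emptyset$, then there is nothing for a curve to be isotoped onto, so no curve on $S$ is peripheral. Consequently the only peripheral lamination on $S$ is the empty collection of curves.

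The second step is to match this back to the shear coordinate side. Under the bijection between laminations on $S$ and $\Z^n$ given by shear coordinates (cited from~\cite[Theorem~13.6]{FT} in Section~\ref{lam}), the empty lamination corresponds to the zero vector $\b=0$. Since Definition~\ref{def_b} requires admissible vectors to satisfy $\b\ne 0$, this lamination does not give an admissible vector. Combining with Theorem~\ref{t_per}, no admissible vector exists.

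No part of the argument presents a real obstacle; the content of the corollary is essentially a translation of Theorem~\ref{t_per} under the hypothesis $\partial S=\emptyset$. The only point worth being careful about is remembering that the admissibility condition forbids $\b=0$, so that one cannot salvage a ``trivial'' admissible vector from the empty lamination.
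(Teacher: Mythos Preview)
Your proof is correct and follows the same approach as the paper, which simply notes that a surface without boundary has no peripheral curves and invokes Theorem~\ref{t_per}. You are in fact slightly more careful than the paper in explicitly addressing the empty lamination and checking that it corresponds to $\b=0$, which Definition~\ref{def_b} excludes.
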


\begin{example}
\label{annulus}
Let $Q$ be the affine quiver $\widetilde A_1$ (two vertices $v_1$ and $v_2$ connected by a double arrow from $v_1$ to $v_2$). It corresponds to an annulus with one marked point on each boundary component, see  Fig.~\ref{ann}. The only peripheral curve on the annulus coincides with the unique closed curve inside this annulus (here we use the fact that every boundary component contains only one marked point). So, every peripheral lamination consists of an integer number of copies of this closed curve.
As one can see from Fig.~\ref{ann}, the corresponding coefficient vector satisfies $$b_1=-b_2\le 0$$
(recall that $b_i$ denotes negative shear coordinate, i.e. the number of arrows from a vertex $v_i$ to the frozen vertex).
  
\end{example}

The result of Example~\ref{annulus} can be reformulated as follows.

\begin{cor}[Annulus property]
\label{cor-ann}  
  Let $Q$ be the rank 2 quiver with a double arrow from $v_1$ to $v_2$ and $\b=(b_1,b_2)$ be an admissible vector. Then $b_1=-b_2\le 0$.
This will be called the {\it annulus property for $v_1\!=\!\!>\!v_2$}.
\end{cor}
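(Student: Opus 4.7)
The plan is to simply package Example~\ref{annulus} into a statement about admissible vectors by invoking Theorem~\ref{t_per}. The rank $2$ quiver with a double arrow from $v_1$ to $v_2$ is exactly the quiver $\widetilde{A}_1$ associated to the triangulation of the annulus $S$ with one marked point on each boundary component, using the two arcs depicted in Fig.~\ref{ann}. So admissibility of the coefficient vector $\b=(b_1,b_2)$ can be read off from laminations on $S$.

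First I would apply Theorem~\ref{t_per} to reduce to peripheral laminations on $S$. Then I would classify these: on the annulus with a single marked point on each boundary component, a simple closed curve isotopic to a boundary component is the unique (up to isotopy) non-trivial peripheral curve, because any non-closed curve must have both endpoints at unmarked boundary points, of which there are none available to produce a non-trivial arc within the allowed isotopy classes. Hence every peripheral lamination is of the form $L_k$ consisting of $k\ge 0$ parallel copies of the core curve of the annulus.

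Next I would compute the shear coordinates $b_{\gamma_1}(T,L_k)$ and $b_{\gamma_2}(T,L_k)$ of $L_k$ with respect to the two arcs $\gamma_1,\gamma_2$ of $T$. A single copy of the core curve crosses each arc exactly once, and the quadrilateral surrounding $\gamma_1$ and the one surrounding $\gamma_2$ receive the two opposite configurations in Fig.~\ref{shear}. Thus one arc contributes $+1$ and the other $-1$ per copy, giving shear coordinates $(-k,k)$ (with the sign convention forced by Fig.~\ref{ann}). Recalling from Section~\ref{lam} that $b_i$ is the \emph{negative} shear coordinate, this yields $b_1=k$ on one side and $b_2=-k$ on the other, i.e. $b_1=-b_2\le 0$ in the orientation of the double arrow $v_1\!=\!\!>\!v_2$.

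There is no serious obstacle beyond making sure the sign conventions line up with the orientation of the double arrow, which is the only genuine book-keeping step: one should verify that the arc whose shear coordinate becomes positive under the core-curve lamination is the one whose vertex in the quiver is the source of the double arrow, so that after switching signs the inequality points in the direction $b_1\le 0$ rather than $b_2\le 0$. With Fig.~\ref{ann} this verification is immediate, and the corollary follows.
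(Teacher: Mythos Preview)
Your approach is essentially the paper's own: both invoke Theorem~\ref{t_per}, observe that on the once-marked annulus the only peripheral laminations are multiples of the core curve, and then read off $b_1=-b_2\le 0$ from Fig.~\ref{ann}. Two small corrections are worth noting. First, your reason for excluding non-closed peripheral curves is not quite right: there \emph{are} unmarked boundary points on each boundary circle; the actual reason is that any non-closed curve isotopic into a boundary component would be isotopic to a boundary segment containing at most the single marked point there, and such curves are explicitly disallowed in the definition of a lamination (Section~\ref{lam}). Second, your intermediate sign is reversed: shear coordinates $(-k,k)$ would give $b_1=k\ge 0$ after negation, contrary to the claim; the correct assignment is shear coordinates $(+k,-k)$, hence $b_1=-k\le 0$ and $b_2=k$. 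You flag this yourself as bookkeeping to be settled by the figure, so the argument survives once the signs are put right.
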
  

Corollary~\ref{cor-ann} leads to the following necessary condition for a coefficient vector $\b$ to be admissible, which we will heavily use throughout the paper.

\begin{definition}[Annulus property]
For an arbitrary quiver $Q$, a coefficient vector $\b=(b_1,\dots,b_n)$ satisfies  the {\it annulus property}  if for  every double arrow  $v_i\!=\!\!>\!v_j$ in $Q$ we have  $b_i=-b_j\le 0$.
\end{definition}

\begin{figure}[!h]
\begin{center}
  \epsfig{file=./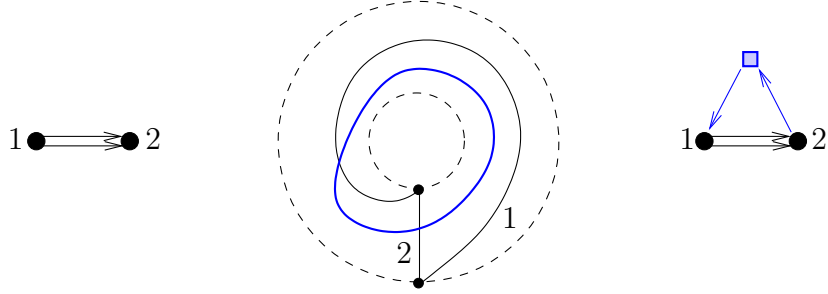,width=0.65\linewidth}
  \put(-303,53){$1$}
  \put(-251,53){$2$}
  \put(-116,23){$1$}
  \put(-156,10){$2$}
  \put(-49,53){$1$}
  \put(1,53){$2$}
\caption{Quiver $\widetilde A_1$, annulus and admissible coefficients. }
\label{ann}
\end{center}
\end{figure}

\section{Quivers of affine type}
\label{aff}

Let $L$ be a lamination on an annulus. A curve $C\in L$ 
 is called {\it bridging} if it has endpoints on both boundary components of the annulus (in other words, if and only if it is not peripheral).

\begin{lemma}
\label{l_ann}  
  Let $S_{p,q}$ be the annulus with  $p$ and $q$ boundary marked points triangulated as in Fig.~\ref{ann_big}. Then a vector $(b_1,\dots,b_n)$ is admissible if and only if it satisfies the annulus property.

\end{lemma}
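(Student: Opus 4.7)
The plan is to invoke Theorem~\ref{t_per}, which identifies admissible vectors with peripheral laminations, so that the task reduces to a shear-coordinate characterization of peripheral laminations for the triangulation from Fig.~\ref{ann_big}. The necessity direction is automatic, since Corollary~\ref{cor-ann} applies to each double arrow of the quiver $Q$ of this triangulation. The content of the lemma is therefore: if the (negative) shear coordinate vector of a lamination $L$ satisfies the annulus property with respect to the fixed triangulation, then $L$ must be peripheral.

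To prove this, I would first identify the bridging arcs $\gamma_1,\dots,\gamma_k$ in Fig.~\ref{ann_big} (cyclically ordered around the annulus) and use the quiver construction of~\cite{FST} to verify that each consecutive pair $\gamma_i,\gamma_{i+1}$ contributes a double arrow to $Q$, because the two arcs share a triangle on each side of the annulus, while the remaining boundary-parallel arcs only contribute single arrows. The annulus property thus imposes $b_i+b_{i+1}=0$ with $b_i\le 0$ at each consecutive pair of bridging arcs, and nothing else beyond what can be obtained from peripheral curves.

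The core of the proof is a local shear-coordinate analysis performed as follows. First, peripheral curves in $L$ (closed core-parallel curves and boundary-parallel arcs) contribute $0$ to the shear coordinate of every bridging arc, since their crossings of a bridging arc do not execute the zigzag pattern recorded in Section~\ref{lam}. Second, a bridging (non-peripheral) curve of $L$ crosses the quadrilateral surrounding any two consecutive bridging arcs $\gamma_i,\gamma_{i+1}$ through the pair of opposite edges determined by the two boundary segments of the annulus, and hence contributes the same sign $\pm 1$ to both $b_i$ and $b_{i+1}$. Third, since all bridging curves in $L$ are pairwise disjoint, they necessarily wind around the annulus coherently, so their shear contributions to consecutive bridging arcs add with a matching sign rather than cancelling. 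Putting these three points together, the equation $b_i+b_{i+1}=0$ forces the bridging component of $L$ to be empty, making $L$ peripheral as required.

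The main obstacle I anticipate is the careful verification of the three points above when the parameters $p$ or $q$ are small and the triangulation in Fig.~\ref{ann_big} degenerates, for example when some sides of the quadrilateral around $\gamma_i$ collapse onto a single marked boundary segment, or when a "fan" of bridging arcs consists of a single arc on one side. These degenerate cases can be handled by choosing the elementary peripheral laminations of Section~\ref{lam} as an explicit generating set on $S_{p,q}$ and invoking the bijection between laminations and shear-coordinate vectors to conclude that every solution of the annulus property is realized by a peripheral lamination, with no room for non-peripheral contributions.
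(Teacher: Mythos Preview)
Your argument has a genuine gap at its core. In the triangulation of Fig.~\ref{ann_big} there are exactly two bridging arcs, namely $\gamma_1$ and $\gamma_2$, and a single double arrow between them; all other arcs lie in boundary fans. So the ``local shear-coordinate analysis'' must be carried out for the quadrilaterals around $\gamma_1$ and $\gamma_2$ inside the sub-annulus $S_{1,1}$. There your three points fail. First, the closed core-parallel curve $C$ is peripheral but does \emph{not} contribute $0$: it gives $(b_1,b_2)=(-1,1)$. Second, and more seriously, a single bridging curve does \emph{not} contribute with the same sign to $b_1$ and $b_2$: the quadrilateral around $\gamma_1$ has $\gamma_2$ on two opposite sides and boundary segments on the other two, so a bridging curve that winds can enter through a boundary side and leave through a $\gamma_2$ side, producing shear contributions of opposite sign or zero. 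Explicitly, bridging curves on $S_{1,1}$ realise $(b_1,b_2)\in\{(0,1),(1,0),(0,-1),(-1,0),(-1,2),(-2,3),(-2,1),\dots\}$, so the deduction ``$b_1+b_2=0\Rightarrow$ no bridging curve'' breaks down. Your fallback in the last paragraph (generating the solution set by elementary peripheral laminations) is not a proof either: laminations do not form a group, and surjectivity of the map from peripheral laminations onto $\{b_1=-b_2\le 0\}$ is precisely the statement to be proved.

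The paper's proof proceeds differently: it restricts to the shaded sub-annulus $S_{1,1}$, observes that non-closed peripheral curves have $b_1=b_2=0$ there while $C$ (which cannot coexist with a bridging curve) has $b_1=-b_2=-1$, so it suffices to rule out collections of mutually disjoint bridging curves. Since any two compatible bridging curves on $S_{1,1}$ differ by at most one Dehn twist, one simply lists the finitely many possible $(b_1,b_2)$ for a single bridging curve and for a compatible pair, and checks that none satisfy $b_1=-b_2\le 0$. This explicit enumeration is what is missing from your outline.
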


\begin{figure}[!h]
\begin{center}
  \epsfig{file=./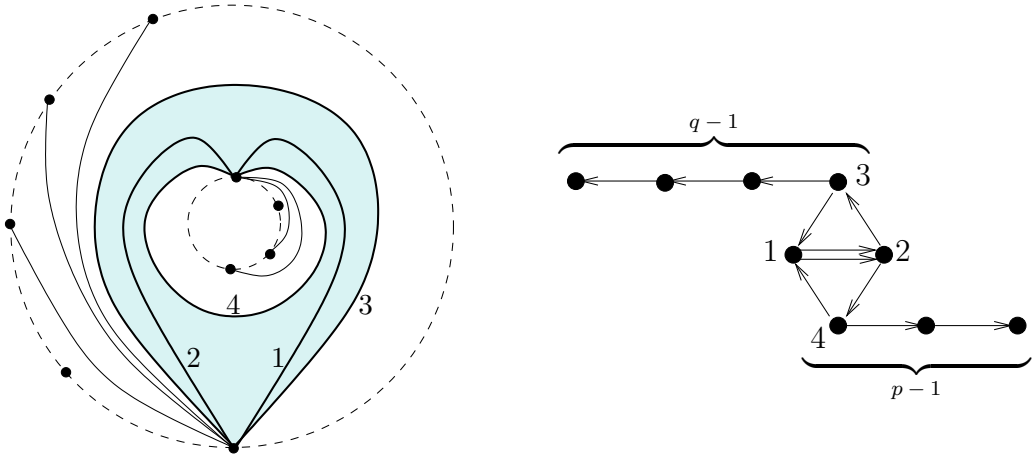,width=0.85\linewidth}
  \put(-303,53){\small $4$}
  \put(-253,53){\small $3$}
  \put(-318,33){\small $2$}
  \put(-286,33){\small $1$}
  \put(-65,102){$3$}
  \put(-82,40){$4$}
  \put(-100,73){$1$}
  \put(-50,73){$2$}
  \put(-89,37){  $ \underbrace{\phantom{aaaaaaaaaaaaaa}}_{\text{$p-1$}}   $    }  
  \put(-181,108){  $ \overbrace{\phantom{aaaaaaaaaaaaaaaaaaa}}^{\text{$q-1$}}   $    }  
\caption{Triangulated annulus $S_{p,q}$ with the corresponding quiver of type $\widetilde A_{p,q}$. }
\label{ann_big}
\end{center}
\end{figure}

\begin{proof}
  The annulus property $b_1=-b_2\le 0$ is necessary by Corollary~\ref{cor-ann}. We need to prove that it is also sufficient for admissibility of  $(b_1,\dots,b_n)$. In view of Theorem~\ref{t_per}, this is equivalent  to proving that for every vector   $(b_1,\dots,b_n)$ satisfying $b_1=-b_2\le 0$ there exists a peripheral lamination resulting in this vector. Since every vector is realisable by some lamination, we see that  it is sufficient to show
that a  lamination satisfying the condition  $b_1=-b_2\le 0$ cannot contain bridging curves.

Suppose that $L$ is a lamination on  $S_{p,q}$ with $b_1=-b_2\le 0$ and containing a bridging curve $l$.
Consider the restriction $\overline L$ of the lamination $L$ to the shaded annulus  $S_{1,1}$ with one marked point at each boundary component (see  Fig.~\ref{ann_big}).
The restriction $\bar l$ of the curve $ l$ to  $S_{1,1}$  is a bridging curve for  $S_{1,1}$. In Fig.~\ref{ann-bridge} we show a triangulated annulus (left) and its universal cover (right). For every bridging curve, we draw its lift (we normalize it by drawing the ``lower'' end in the same square of the universal cover)
and  compute its (negative) shear coordinates. 
 Notice that  every peripheral curve satisfies either $b_1=b_2=0$ (if it is not the closed curve) of $b_1=-b_2=-1$ otherwise. The latter is not contained in $L$ in presence of a bridging curve. Hence, peripheral curves in $L$ do not affect $b_1$ and $ b_2$, and it is sufficient
 to check coordinates of all collections of mutually non-intersecting bridging curves. 

 Any bridging curve on  $S_{1,1}$ can be obtained from any other bridging curve by application of a power of the Dehn twist along the unique closed curve, and if two curves differ by more than one twist then they intersect each other.  One can easily see that no bridging curve on  $S_{1,1}$ satisfies $|b_1|=|b_2|$, and coordinates $(b_1,b_2)$ of a pair of bridging curves  differing by one twist can take values $(-2k-1,2k+3), (1,1),(1,-1),(-1,-1),(-2k-2,2k+1)$ for $k\ge 0$ (see Fig.~\ref{ann-bridge}). Noone of these satisfies  $b_1=-b_2\le 0$, so we obtain a contradiction. 
\end{proof}

\begin{figure}[!h]
\begin{center}
  \epsfig{file=./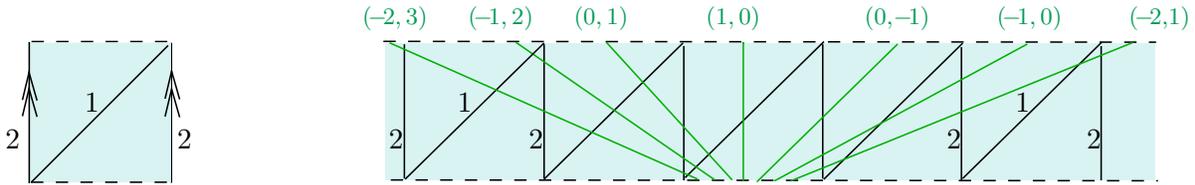,width=0.95\linewidth}
  \put(-438,14){\small $2$}
  \put(-373,14){\small $2$}
  \put(-408,28){\small $1$}
  \put(-267,28){\small $1$}
  \put(-293,14){\small $2$}
  \put(-240,14){\small $2$}
  \put(-56,28){\small $1$}
  \put(-82,14){\small $2$}
  \put(-29,14){\small $2$}  
  \put(-303,61){\color{ForestGreen} \scriptsize $(\!-\!2,3)$}
  \put(-263,61){\color{ForestGreen} \scriptsize $(\!-\!1,2)$}
  \put(-223,61){\color{ForestGreen} \scriptsize $(0,1)$}
  \put(-173,61){\color{ForestGreen} \scriptsize $(1,0)$}
  \put(-113,61){\color{ForestGreen} \scriptsize $(0,\!-\!1)$}
 \put(-63,61){\color{ForestGreen} \scriptsize $(\!-\!1,0)$}
\put(-13,61){\color{ForestGreen} \scriptsize $(\!-\!2,\!1)$}
  \caption{Triangulated annulus $S_{1,1}$, its universal cover,  and bridging curves with corresponding values of $(b_1,b_2)$ (recall that we define $b_i$ as negative shear coordinates).  }
\label{ann-bridge}
\end{center}
\end{figure}

\begin{remark}
  \label{a-any}
  Notice that the proof of Lemma~\ref{l_ann} does not use any properties of the triangulation of $S_{p,q}$ outside of the shaded annulus. In other words, the same proof works for any triangulation  of $S_{p,q}$ with the associated quiver containing a double arrow.   

  \end{remark}

We will now use Lemma~\ref{l_ann} to classify all admissible vectors for the remaining quivers of affine type.

Take the representatives of the mutation classes  $\t D_n$, $\widetilde E_6$, $\widetilde E_7$, $\widetilde E_8$ shown
in Fig.~\ref{af_e678}. A necessary condition on an admissible vector follows from the annulus property. 
The following theorem shows that every coefficient not breaking the annulus property is admissible.

\begin{figure}[!h]
\begin{center}
  \epsfig{file=./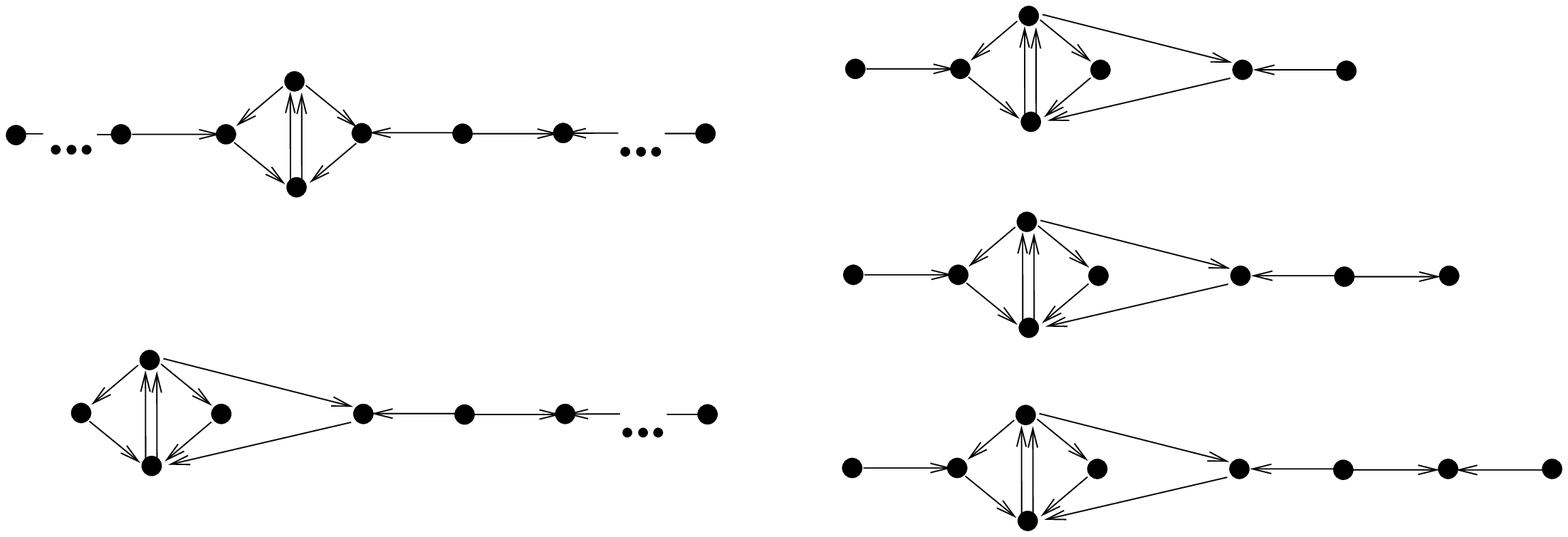,width=0.99\linewidth}
  \put(-215,160){$\widetilde E_6 $}
  \put(-162,151){\small $0$}
  \put(-162,109){\small $1$}  
  \put(-130,131){\small $2$}  
  \put(-181,138){\small $3$}  
  \put(-211,138){\small $4$}  
  \put(-97,138){\small $5$}  
  \put(-67,138){\small $6$}  
\put(-215,97){$\widetilde E_7 $}  
  \put(-162,92){\small $0$}
  \put(-162,50){\small $1$}  
  \put(-131,71){\small $2$}  
  \put(-180,78){\small $3$}  
  \put(-211,78){\small $4$}  
  \put(-97,78){\small $5$}  
  \put(-67,78){\small $6$}  
  \put(-38,78){\small $7$}  
\put(-215,40){$\widetilde E_8 $}  
  \put(-162,37){\small $0$}
  \put(-162,-8){\small $1$}  
  \put(-131,16){\small $2$}  
  \put(-180,23){\small $3$}  
  \put(-211,23){\small $4$}  
  \put(-97,23){\small $5$}  
  \put(-67,23){\small $6$}  
  \put(-38,23){\small $7$}  
  \put(-8,23){\small $8$}
  \put(-455,135){$\widetilde A_{p,q} $}
  \put(-353,100){  $ \underbrace{{\phantom{aaaaaaaaaaaaaaaaa}}}_{\text{$q-1$}}$    }  
\put(-455,100){  $ \underbrace{{\phantom{aaaaaaaaaaa}}}_{\text{$p-1$}}   $    }  
\put(-455,60){$\widetilde D_n $}  
  \put(-416,53){\small $0$}
  \put(-416,9){\small $1$}  
  \put(-384,32){\small $2$}  
  \put(-441,32){\small $n$}    
  \put(-348,39){\small $3$}    
  \put(-320,39){\small $4$}    
  \put(-290,39){\small $5$}    
  \put(-260,39){\small $n\!-\!1$}    
\caption{Representatives of affine mutation classes $\widetilde A_{p,q}$,  $\widetilde D_n$, $\widetilde E_{n}$. }
\label{af_e678}
\end{center}
\end{figure}

\begin{theorem}
 \label{t_af_e} 
Let $Q$ be the quiver of type  $\t D_n$, $\widetilde E_6$, $\widetilde E_7$ or $\widetilde E_8$ shown
in Fig.~\ref{af_e678}. A coefficient vector $\b$ is admissible if and only if it satisfies $b_0=-b_1\le 0$.

\end{theorem}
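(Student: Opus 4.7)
The necessity direction is immediate from the annulus property (Corollary~\ref{cor-ann}): every quiver in Fig.~\ref{af_e678} contains the double arrow $v_0\!\Rightarrow\! v_1$ and no other multiple arrows, so the only constraint imposed by that property is $b_0=-b_1\le 0$.

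For sufficiency, fix an integer vector $\b$ with $b_0=-b_1\le 0$ and arbitrary $b_i$ for $i\ge 2$. Since the principal part $Q$ is mutation-finite, it suffices to prove that under iterated mutations of the ice quiver (obtained by adjoining the frozen vertex to $Q$ via $\b$), the coefficient vector takes only finitely many values. The plan is to exploit the cluster modular group $\Gamma(Q)$, whose structure in the affine case is explicitly described by Kaufman~\cite{KG}: it is a semidirect product of the finite automorphism group of $Q$ and an infinite cyclic subgroup generated by a distinguished mutation sequence $\tau$ that plays the role of a translation. I would proceed in three steps: (i) write down $\tau$ explicitly for each of the four types from~\cite{KG}; (ii) compute the induced linear map $T_\tau\colon\Z^n\to\Z^n$ on the extra column of the extended exchange matrix; and (iii) show that the half-space $S=\{\b\mid b_0=-b_1\le 0\}$ is $T_\tau$-stable and that $T_\tau|_S$ has finite orbits. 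Combined with the finite mutation class of $Q$ and the finiteness of the automorphism part of $\Gamma(Q)$, this would yield the desired finiteness, since every coefficient vector reachable from $\b$ is obtained from finitely many base vectors by applying powers of $T_\tau$, each contributing a finite orbit in $S$.

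The type $\t D_n$ admits a convenient shortcut: $Q$ arises from a triangulated surface, so Theorem~\ref{t_per} reduces the problem to verifying that every $\b$ with $b_0=-b_1\le 0$ corresponds to a peripheral lamination. This can be done by a shear-coordinate computation in the spirit of Lemma~\ref{l_ann}, using that the double arrow $v_0\!\Rightarrow\! v_1$ is the combinatorial trace of the unique essential closed curve on the underlying surface, so any bridging curve through that annular neighborhood is forced to give $|b_0|\ne|b_1|$ in the way analyzed there.

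The main obstacle is step (iii) for the exceptional cases $\t E_6,\t E_7,\t E_8$, where no surface model is available. The geometric picture to implement is that, after identifying the coefficient lattice with (a quotient of) the affine root lattice of the corresponding Lie algebra, $T_\tau$ should act by translation along the null root; the annulus property is then precisely the vanishing of the coefficient vector along this null direction, which kills the unbounded part of the $\tau$-action and leaves a finite-order residue. Verifying this will require an honest case-by-case computation of $T_\tau$ from the explicit $\tau$ given in~\cite{KG}, together with an inspection of its Jordan form; no essentially new conceptual ingredient is anticipated, but the bookkeeping for all three exceptional types is non-trivial.
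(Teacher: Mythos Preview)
Your overall strategy---reduce sufficiency to showing that the cluster modular group $\Gamma(Q)$ acts with finite orbits on coefficient vectors satisfying the annulus constraint---is the same as the paper's, and the necessity argument and the $\t D_n$ shortcut via Theorem~\ref{t_per} are fine.

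For the exceptional types there is a genuine gap. Mutation of the extended exchange matrix acts on the coefficient column by a \emph{piecewise-linear} (tropical) map, not a linear one: each step involves terms $[b_{ik}b_{kj}]_+$. There is therefore no well-defined ``induced linear map $T_\tau$'' on $\Z^n$, and a Jordan-form/null-root analysis does not apply as stated. One might hope that on the half-space $S=\{b_0=-b_1\le0\}$ all intermediate signs are determined so that $T_\tau|_S$ becomes genuinely linear, but this is exactly what would need proof and is not automatic, since the remaining $b_i$ ($i\ge2$) are unconstrained. A second discrepancy: according to~\cite{KG} as quoted in the paper, $\Gamma(Q)$ for these affine types is abelian and generated by \emph{three} mutation sequences $\mu^{(1)},\mu^{(2)},\mu^{(3)}$, not by a single translation $\tau$ up to finite automorphisms.

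The paper avoids any explicit computation by a trick you are missing. To each generator $\mu^{(k)}$ it associates a ``wing'' $Q_k$ attached to the double edge; deleting any one wing $Q_i$ leaves a quiver of type $\t A$, i.e.\ an annulus with polygons glued to its boundaries. By Lemma~\ref{l_ann} the constraint $b_0=-b_1\le0$ forces the restricted coefficient vector to come from a peripheral lamination on that annulus, and $\mu^{(k)}$ ($k\ne i$) acts on such a lamination as a cyclic rotation of the boundary marked points on one side---hence with finite order, fixing $b_0,b_1$ and every $b_j$ with $j\in Q_i$. Running this for all pairs shows each generator has finite order on $\b$ and affects only its own wing. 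Your root-lattice heuristic is morally right, but the wing-removal reduction turns it into a short surface argument with no case-by-case matrix calculation.
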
  

Before proving the theorem, we recall the notion of the {\em cluster modular group} as the group generated by sequences of mutations (followed by permutations of the vertices of a quiver if needed) preserving the initial quiver (see e.g.~\cite{FeSTTu} for a detailed definition, where the term ``mapping class group of a cluster algebra'' is used instead, and~\cite{F,KG} for detailed descriptions of the cluster modular groups for affine and extended affine algebras).   

\begin{proof}

The necessity of the assumption of the theorem follows from the annulus property. We now prove the sufficiency.

  It is shown in~\cite{KG} that the cluster modular group for $Q$ is an abelian group generated by three mutation sequences (followed by certain permutations) described below. Define the sets of indices\\

  \noindent
  $I_\text{odd}=\begin{cases}
    \!\{i\in[5,k],i\,{\rm odd}\}& \text{\!\!for type $\t E_k$}\\
    \!\{i\in[3,n\!-\!1],i\,{\rm odd}\}& \text{\!\!for type $\t D_n$}
  \end{cases}
  $\ \quad\  
$    I_\text{even}=\begin{cases}
    \!\{i\in[5,k],i\,{\rm even}\}& \text{\!\!for type $\t E_k$}\\
    \!\{i\in[3,n\!-\!1],i\,{\rm even}\}& \text{\!\!for type $\t D_n$}
    \end{cases}
$\\

\noindent
and define the composite mutations $\mu_{\rm odd}$ and $\mu_{\rm even}$ as compositions of commuting mutations in $I_{\rm odd}$ and $I_{\rm even}$ respectively.

In these terms the generators of the cluster modular group can be written as follows:
\begin{itemize}
\item[] $\mu^{(1)}=\mu_2\circ\mu_1\circ\mu_0 $ with cyclic permutation $(v_2v_1v_0)$
\item[] $\mu^{(2)}=\begin{cases}
    \mu_4\circ\mu_3\circ\mu_1\circ\mu_0&  \text{with permutation } (v_3v_1v_0) \text{ for type } \t E_k\\
    \mu_{n}\circ\mu_1\circ\mu_0 & \text{with permutation } (v_{n}v_1v_0) \text{ for type } \t D_n
    \end{cases}$
\item[] $\mu^{(3)} = \mu_\text{even}\circ\mu_\text{odd}\circ\mu_1\circ\mu_0$ with permutation $(v_5v_1v_0)$ or $(v_3v_1v_0)$ for $\t E_k$ and $\t D_n$ resp. 
\end{itemize}

Inside $Q$ consider the following subquivers which we will call {\it wings} (we list the vertices of the subquivers in the brackets):
$$Q_1=\langle v_2\rangle,  \qquad Q_2=\langle v_3,v_4\rangle \text{ or } \langle v_{n}\rangle \text{ for $\t E_k$ and $\t D_n$ resp.}, \qquad Q_3=\langle v_{I_{\rm odd}},v_{I_{\rm even}} \rangle.
$$

We claim that each of $\mu^{(k)}$, $k=1,2,3$,  only changes the value of $b_i$ if $i\in Q_k$ and does not affect others.
To see this, consider the subquivers $Q\setminus Q_i=\langle v_0,v_1,Q_j,Q_k\rangle$, $i,j,k$ distinct. Each of these corresponds to a triangulated annulus, with an annulus $S_{1,1}$ inside (which corresponds to the subquiver $\langle v_0,v_1\rangle$) and  polygons  attached to each of its boundaries (which correspond to the wings).
By Lemma~\ref{l_ann}, the assumption $b_0=-b_1\le 0$ implies that the restriction of vector $\b$ on $Q\setminus Q_i$ is defined by some peripheral lamination on the corresponding annulus. The mutation  $\mu^{(k)}$ acts as a cyclic permutation of the boundary marked vertices of the triangulation corresponding to the $k$-th wing.  Therefore, this element acts trivially on the wing $Q_j$ and on the values of $b_0$ and $b_1$, while the order of the action on $Q_k$ is equal to the number of vertices in $Q_k$ plus one. Since we could choose the quiver $Q\setminus Q_j$ instead, $\mu^{(k)}$ acts trivially on $Q_i$ as well.
Thus, the action of the whole cluster modular group on the vector $\b$ has a finite orbit. 

The rest of the proof is similar to the proof of Theorem~\ref{t_per}. As $Q$ is mutation-finite, the number of distinct mutation sequences modulo the action of the cluster modular group is finite. Together with the finiteness of the orbit of $\b$ under the action of the cluster modular group this results in the admissibility of $\b$.
\end{proof}

We now generalize the result of Theorem~\ref{t_af_e} to all quivers of affine type containing a double arrow.

\begin{theorem}
\label{aff_gen}
If $Q$ is of affine type and $Q$ contains a double arrow, then a vector $\b$
is admissible if and only if $\b$ satisfies the annulus property.
\end{theorem}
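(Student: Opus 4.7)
Necessity of the annulus property follows from Corollary \ref{cor-ann}, so I focus on sufficiency. The mutation class of $Q$ is one of $\t A_{p,q}$, $\t D_n$, $\t E_6$, $\t E_7$, $\t E_8$, and I would split by case. For $\t A_{p,q}$, every representative with a double arrow is the quiver of some triangulation of $S_{p,q}$, and Remark \ref{a-any} asserts that the proof of Lemma \ref{l_ann} applies to any such triangulation. This settles the case at once.

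For $\t D_n, \t E_6, \t E_7, \t E_8$, the plan is to reduce to Theorem \ref{t_af_e}. Given $Q$ containing a double arrow and $\b$ satisfying the annulus property, I choose a mutation sequence $\sigma$ from $Q$ to the standard representative $Q_0$ of Fig.~\ref{af_e678}. Since $Q_0$ itself carries no double arrow, the comparison with the admissibility condition $b_0=-b_1\le 0$ is indirect. The trick is to route $\sigma$ through an intermediate representative $\widehat Q_0=\mu_2(Q_0)$ in which $v_0$ and $v_1$ become endpoints of a double arrow; then the condition $b_0=-b_1\le 0$ at $Q_0$ translates via $\mu_2$ into the annulus property at the double arrow of $\widehat Q_0$. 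It then suffices to connect $Q$ to $\widehat Q_0$ through quivers carrying a double arrow at every step, transporting the annulus property along the way.

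The technical heart is the following local invariance: if quivers $Q_1$ and $Q_2=\mu_k(Q_1)$ both contain double arrows and $\b_1$ satisfies the annulus property for $Q_1$, then $\mu_k(\b_1)$ satisfies it for $Q_2$. Using the mutation formula for the extended matrix together with a case analysis of how a double arrow $v_i\!=\!\!>\!v_j$ in $Q_1$ interacts with the mutation vertex $v_k$ (whether $k\in\{i,j\}$ or $k\not\in\{i,j\}$, and which double arrows of $Q_2$ survive or newly appear), one checks that the sign condition $b_i=-b_j\le 0$ transfers correctly to every double arrow of $Q_2$.

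The main obstacle is constructing, in the exceptional types $\t E_6,\t E_7,\t E_8$, a mutation path from an arbitrary double-arrow representative $Q$ to $\widehat Q_0$ in which every intermediate quiver carries a double arrow. Since each mutation class is finite, this can in principle be verified by enumeration of all representatives containing double arrows together with the single-step mutations between them. A cleaner route is to exploit the cluster modular group generators from the proof of Theorem \ref{t_af_e}: they already link standard representatives to their modular translates, and checking that they can be realised by double-arrow-preserving paths reduces the problem to a short finite check rather than a full enumeration.
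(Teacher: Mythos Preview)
Your treatment of type $\t A$ agrees with the paper. For $\t D$ and $\t E$ there are two problems.

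First, a factual slip: the standard representatives $Q_0$ in Fig.~\ref{af_e678} \emph{do} carry a double arrow, namely $v_0\!=\!\!>\!v_1$; this is why the admissibility condition $b_0=-b_1\le 0$ in Theorem~\ref{t_af_e} \emph{is} the annulus property, and why the proof of that theorem identifies $\langle v_0,v_1\rangle$ with an annulus $S_{1,1}$. Your detour through $\widehat Q_0=\mu_2(Q_0)$ is unnecessary.

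Second, and more seriously, the ``local invariance'' that you call the technical heart is asserted but not proved, and a purely local case analysis of the mutation formula cannot establish it. Such an analysis, based only on how the arrow $v_i\!=\!\!>\!v_j$ interacts with $v_k$, makes no use of the hypothesis that $Q$ is affine; it would apply verbatim inside the mutation class of $X_6$. But there the claim fails: the vector $(0,-\beta,\beta,-\beta,\beta,-\beta)$ with $\beta>0$ satisfies the annulus property at both double arrows of the quiver in Fig.~\ref{x7}, yet the proof of Theorem~\ref{t_x6} shows that after $\mu_{012}$ a new double arrow $v_4v_5$ appears for which the annulus property is violated. Somewhere along that path a single mutation between two quivers with double arrows breaks the property---precisely what your case analysis would have to exclude. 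Any valid argument must therefore feed in a \emph{global} feature of the affine classes; your proposal supplies none, and your fallback (``in principle by enumeration'') is not a proof.

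The paper does exactly this global step, and much more cheaply than local invariance. For $\t D_n$ it invokes the classification of~\cite{H}, and for $\t E_6,\t E_7,\t E_8$ it inspects the finite mutation classes, to conclude that every representative with a double arrow is reached from $Q_0$ by mutations only at vertices $v_i$ with $i\ge 4$ (resp.\ $i\ge 6$). None of these vertices is adjacent to $v_0$ or $v_1$, so such mutations change neither the double arrow $v_0\!=\!\!>\!v_1$ nor the values $b_0,b_1$; the annulus property at the given $Q$ is then literally the condition $b_0=-b_1\le 0$ at $Q_0$, and Theorem~\ref{t_af_e} finishes.
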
  

\begin{proof}
  For the quivers of types $\widetilde A$ the statement follows from Remark~\ref{a-any}. All quivers of type $\widetilde D$ are classified in~\cite{H}, and it follows from the classification that any quiver with a double arrow can be obtained from the quiver $Q$ of type $\t D_n$ shown in Fig.~\ref{af_e678} by mutations in vertices $v_i$ for $4\le i\le n$. Therefore, we can mutate our quiver to $Q$ preserving the annulus property, so Theorem~\ref{t_af_e} implies that the vector $\b$ is admissible. 

  The proof for types $\widetilde E_6$,  $\widetilde E_7$,  $\widetilde E_8$ is similar: the inspection of the mutation classes shows that all quivers with double arrows are obtained from the quivers in Fig.~\ref{af_e678} by a sequence of mutations at vertices $v_6,v_7,v_8$. As none of these vertices is connected to $v_0$ and $v_1$, such a sequence of mutations cannot break the annulus property.
  \end{proof}  

\section{Extended affine quivers}
\label{extended}

In this section, we prove that there are no admissible vectors for extended affine types $E_{6,7,8}^{(1,1)}$. For every mutation class we choose a specific representative containing a double arrow and find an element $\mu$ from the cluster modular group such that the application of $\mu$ breaks the annulus property. 

\subsection{Mutation class of $E_6^{(1,1)}$}
\begin{theorem}
\label{t_e6_11}  
There is no admissible vector for a quiver in the mutation class of $E_6^{(1,1)}$.

\end{theorem}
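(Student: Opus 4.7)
The plan is to follow the strategy sketched in the introduction: fix the specific representative $Q$ of the mutation class of $E_6^{(1,1)}$ drawn in Fig.~\ref{e6_11} (which contains a double arrow), and exhibit an element $\varphi$ of the cluster modular group of $Q$ whose action on a hypothetical admissible coefficient vector turns the annulus property on $Q$ into a system of sign-constrained linear relations with no nonzero solution. Suppose for contradiction that $\b=(b_1,\dots,b_n)$ is admissible. By Corollary~\ref{cor-ann} applied to each double arrow $v_i\!\Rightarrow\!v_j$ in $Q$, we obtain the constraints $b_i=-b_j\le 0$.

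Next, I would choose $\varphi$ to be a sequence of mutations followed by a permutation of vertices that returns $Q$ to itself as a quiver. Since $\varphi(Q)=Q$ and the class is mutation-finite, the transformed vector $\varphi(\b)$ must itself be admissible, hence also must satisfy the annulus property on $Q$: $\varphi(\b)_i=-\varphi(\b)_j\le 0$ for every double arrow of $Q$. The key computation is to track how $\b$ evolves under $\varphi$ by iterating the standard mutation rule on the frozen row of the extended matrix; each $\mu_k$ replaces the entries in the frozen row by an explicit piecewise-linear function of the current values. Combining the annulus constraints for $\b$ with the annulus constraints for $\varphi(\b)$ should yield a chain of sign-constrained equalities forcing every $b_i=0$, which contradicts $\b\ne 0$ in Definition~\ref{def_b}.

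A useful auxiliary trick, which I expect to make the bookkeeping considerably shorter, is to exploit Lemma~\ref{l_ann} on embedded subquivers: whenever $Q$ (or some mutation of it) contains a subquiver arising from a triangulated annulus $S_{p,q}$, the lamination description forces an extra annulus-type equality on the restriction of $\b$. Because the cluster modular group of $E_6^{(1,1)}$ acts with enough orbits on such embedded annuli (as can be read off from the description of the cluster modular group in~\cite{KG}), one can accumulate enough independent equalities $b_i=-b_j$ together with sign conditions $b_i\le 0$ to conclude that $\b=0$.

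The main obstacle I anticipate is choosing $\varphi$ (or, equivalently, the family of annulus subquivers to exploit) so that the constraints produced are genuinely transverse to those already imposed; a random element of the cluster modular group may well act trivially on $\b$ if the annulus constraints are preserved by it. The correct $\varphi$ should therefore be one that permutes the double arrows of $Q$ or moves the frozen vertex relative to them, so that the annulus property evaluated before and after the mutation sequence produces inequivalent linear conditions. Once such a $\varphi$ is identified, the remaining work is a finite and explicit (but tedious) matrix-mutation calculation.
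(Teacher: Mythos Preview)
Your outline correctly identifies the strategy the paper uses, but what you have written is a plan, not a proof: the entire content lies in the steps you defer (``I would choose $\varphi$'', ``the remaining work is a finite and explicit calculation''). The paper's choice is $\mu=\mu_*\circ\mu_\diamond\circ\mu_*$ with $\mu_*=\mu_3\mu_2\mu_1$ and $\mu_\diamond=\mu_7\mu_6\mu_5\mu_4$; this sends $Q$ to $Q^{\mathrm{op}}$ rather than back to $Q$. The reason this specific sequence works is a monotonicity that kills all case distinctions in the piecewise-linear mutation formulas: at each of the three composite steps the coordinate $b_8$ only acquires summands of the form $-[-x]_+\le 0$, so $b_8^{(3)}\le b_8^{(2)}\le b_8^{(1)}\le b_8\le 0$. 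Because the double arrow in $Q^{\mathrm{op}}$ is reversed, the annulus property there forces $b_8^{(3)}\ge 0$, hence $b_8^{(3)}=0$ and every inequality collapses to an equality, which pins down the signs of all intermediate quantities. One then computes $b_7^{(3)}$ and finds it is a sum of non-negative terms forced to vanish, yielding $\b=0$. Without a sequence engineered to have this one-sided structure, the max-plus formulas branch and there is no guarantee the constraints close up; identifying such a sequence \emph{is} the proof, not a routine afterthought.

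Your auxiliary trick---applying Lemma~\ref{l_ann} to embedded $\widetilde A$-subquivers---is harmless but buys nothing. It is true that a full subquiver (together with the frozen vertex) of a mutation-finite quiver remains mutation-finite under mutations at its own vertices, so Lemma~\ref{l_ann} and Remark~\ref{a-any} do apply to such a subquiver. But those results say precisely that the annulus property on the double arrow is the \emph{only} admissibility constraint on an annulus quiver; there is no ``extra annulus-type equality'' beyond $b_i=-b_j\le 0$, which is already Corollary~\ref{cor-ann}. The paper does not use this device for $E_6^{(1,1)}$, and it would not shorten the argument.
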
  

\begin{proof}

It is sufficient to prove the statement for one quiver from the mutation class. We consider the quiver $Q$ shown in Fig.~\ref{e6_11}, left. Suppose that $\b=(b_1,\dots,b_8)$ is an admissible vector.

\medskip
\noindent
{\bf Plan of the proof and notation.}
The subquiver $\langle v_7,v_8\rangle$ is of type $\widetilde A_1$, so from the annulus property for $v_8=>v_7$  we have 
\begin{equation}
\label{(1)}  
  b_8=-b_7\le 0.
\end{equation}  

We will find a sequence of mutations $\mu$ taking $Q$ to the opposite quiver $Q^{op}$ (where $Q^{op}$ is obtained from $Q$ by reversing all arrows) and check that after the application of the mutation sequence $\mu$ to ${\bm b}$ the annulus property does not hold.

More precisely, let $$\mu_{*}=\mu_{3}\circ \mu_{2}\circ \mu_{1}  \quad \text{and} \quad 
\mu_\diamond=\mu_{7}\circ \mu_{6}\circ \mu_{5}\circ \mu_{4}$$
(notice that the components of each of these composite mutations commute) and consider the following sequence of three composite mutations:
$$ \mu=\mu_*\circ \mu_\diamond\circ \mu_*.
$$
Observe that  $Q_1:=\mu_*(Q)$ is the quiver shown in  Fig.~\ref{e6_11}, right, 
$Q_2:= \mu_\diamond\circ \mu_*(Q)=Q_1^{op}$ is the quiver opposite to $Q_1$, and  $Q_3:=\mu(Q)=Q^{op}$ is the quiver opposite to $Q$.

\begin{figure}[!h]
\begin{center}
  \epsfig{file=./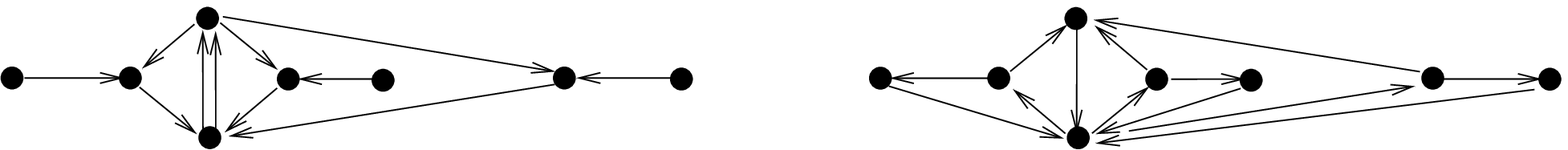,width=0.99\linewidth}
  \put(-455,25){\scriptsize 4}
  \put(-419,25){\scriptsize 1}
  \put(-371,25){\scriptsize 2}
  \put(-345,24){\scriptsize 5}
  \put(-293,25){\scriptsize 3}
  \put(-260,25){\scriptsize 6}
  \put(-399,40){\scriptsize 7}
  \put(-399,-4){\scriptsize 8}
  \put(-205,25){\scriptsize 4}
  \put(-169,25){\scriptsize 1}
  \put(-121,25){\scriptsize 2}
  \put(-95,24){\scriptsize 5}
  \put(-43,25){\scriptsize 3}
  \put(-10,25){\scriptsize 6}
  \put(-149,40){\scriptsize 7}
  \put(-148,-4){\scriptsize 8}
\caption{$Q=E_6^{1,1}$ (left) and $\mu_*(Q)$ (right). }
\label{e6_11}
\end{center}
\end{figure}

We now compute how the vector $\b$ changes under the sequence of mutations. Denote its components
by $b_i^{(1)}$,  $b_i^{(2)}$ and  $b_i^{(3)}$  after applying $\mu_*$,  $\mu_\diamond\circ \mu_*$ and   $\mu=\mu_*\circ \mu_\diamond\circ \mu_*$ respectively.

If $\b$ is an admissible vector, then  $(b_1^{(3)},\dots,b_8^{(3)})$ satisfies the annulus property for $v_7\!=\!\!>\!v_8$ in $Q_3$ (as $Q_3=Q^{op}$), i.e. one must have
\begin{equation}
\label{(2)}  
b_7^{(3)}=-b_8^{(3)}\le 0,
\end{equation}
but the computation will show this implies $\b= 0$.

\medskip
\noindent
{\bf Computation of  $b_8^{(3)}$.}
We start by computing $b_8^{(1)}$, $b_8^{(2)}$ and $b_8^{(3)}$:
\vspace{10pt}

\begin{itemize}\setlength\itemsep{1em}
\item[]   $b_8^{(1)}=b_8-[-b_1]_+ -[-b_2]_+ -[-b_3]_+ \le b_8 \le 0$;  
\item[]   $b_8^{(2)}=b_8^{(1)}-[-b_4^{(1)}]_+ -[-b_5^{(1)}]_+ -[-b_6^{(1)}]_+ -[-b_7^{(1)}]_+ \le b_8^{(1)} \le 0$;
\item[]   $b_8^{(3)}=b_8^{(2)}-[-b_1^{(2)}]_+ -[-b_2^{(2)}]_+ -[-b_3^{(2)}]_+ \le b_8^{(2)} \le 0$.

\end{itemize}

If $b_8^{(3)}\ne 0$ then we obtained  $b_8^{(3)}< 0$ which contradicts~(\ref{(2)}). Therefore  $b_8^{(3)}= 0$.
Furthermore, since $b_8\le 0$ and all summands above are also non-positive, the condition $b_8^{(3)}= 0$ is satisfied if and only if $b_8= 0$ and all entries in the computation above vanish. This results in the following constrains:
\begin{equation}
  \label{(3)}b_7=b_8=0,\quad b_1,b_2,b_3\ge 0,\quad b_4^{(1)},b_5^{(1)},b_6^{(1)},b_7^{(1)}\ge 0,\quad b_1^{(2)},b_2^{(2)},b_3^{(2)}\ge 0.
  \end{equation}

\medskip
\noindent
{\bf Computation of  $b_7^{(3)}$.}
Since  $b_8^{(3)}= 0$, (\ref{(2)}) implies that $b_7^{(3)}= 0$. Our goal is to express  $b_7^{(3)}$ via the components of $\b$ to find further constrains on $b_i$. We do this by first expressing $b_7^{(3)}$ via $b_i^{(2)}$, then computing required $b_i^{(2)}$ in terms of $b_j^{(1)}$, etc. While computing we will use the inequalities~(\ref{(3)}).

\begin{itemize}\setlength\itemsep{0.3em}
\item[]   $b_7^{(1)}=b_7+[b_1]_+ +[b_2]_+ +[b_3]_+ =b_1+b_2+b_3$;  
\item[]   $b_7^{(2)}=-b_7^{(1)}=-b_1-b_2-b_3$;
\item[]   $b_7^{(3)}=b_7^{(2)}+[b_1^{(2)}]_+ +[b_2^{(2)}]_+ +[b_3^{(2)}]_+ =b_7^{(2)}+b_1^{(2)} +b_2^{(2)}+b_3^{(2)}$.\\
\end{itemize}

\begin{itemize}\setlength\itemsep{0.3em}
\item[]  $b_i^{(1)}=-b_i\le 0$ for $i=1,2,3$;
\item[]  $b_4^{(1)}=b_4+b_1\ge 0$;  \qquad  $b_5^{(1)}=b_5+b_2\ge 0$; \qquad  $b_6^{(1)}=b_6+b_3\ge 0$; 
\item[]  $b_7^{(1)}=b_1+b_2+b_3\ge 0$;
\vspace{10pt}
\item[]  $b_1^{(2)}=b_1^{(1)}+b_4^{(1)}+b_7^{(1)}=(-b_1)+(b_4+b_1)+(b_1+b_2+b_3=b_4+b_1+b_2+b_3)\ge 0$;
\item[]  $b_2^{(2)}=b_2^{(1)}+b_5^{(1)}+b_7^{(1)}=b_5+b_1+b_2+b_3\ge 0$;
\item[]  $b_3^{(2)}=b_3^{(1)}+b_6^{(1)}+b_7^{(1)}=b_3+b_1+b_2+b_3\ge 0$.
\end{itemize}

Finally, we obtain
$$
b_7^{(3)}=b_7^{(2)}+b_1^{(2)} +b_2^{(2)}+b_3^{(2)}=(b_4+b_1)+(b_5+b_2)+(b_6+b_3)+b_1+b_2+b_3= 0.
$$

Notice that every summand in the sum is non-negative. Therefore, every summand is zero, in particular, $b_1=b_2=b_3=0$, from which we have
$b_4=b_5=b_6=0$. Since we also know $b_7=b_8=0$, we conclude that $\b=0$, which implies there are no non-zero admissible vectors.
\end{proof}

\subsection{Mutation class of $E_7^{(1,1)}$}
\begin{theorem}
There is no admissible vector for a quiver in the mutation class of $E_7^{(1,1)}$.

\end{theorem}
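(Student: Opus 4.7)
The plan is to follow verbatim the template of the proof of Theorem~\ref{t_e6_11}. I would first fix a specific representative $Q$ of the $E_7^{(1,1)}$ mutation class containing a double arrow $v_i\!=\!\!>\!v_j$; by analogy with the $E_6^{(1,1)}$ picture, it will consist of the double arrow with three wings attached, but now one of the wings is longer by one vertex to account for the rank increase. The annulus property for this double arrow then yields the initial constraint $b_i=-b_j\le 0$ on any admissible $\b$.

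Next I would extract from the cluster modular group description of~\cite{KG} an element $\mu$ of the form $\mu=\mu_*\circ\mu_\diamond\circ\mu_*$ that sends $Q$ to the opposite quiver $Q^{op}$. Here $\mu_*$ should be the composite of commuting mutations at the vertices adjacent to both ends of the double arrow, and $\mu_\diamond$ the composite of commuting mutations at the outer wing vertices together with one end of the double arrow; the components of each composite commute because the corresponding vertices are mutually disconnected in the intermediate quivers. Since $\mu(Q)=Q^{op}$, admissibility of $\b$ forces the mutated vector $\b^{(3)}$ to satisfy the annulus property on the reversed double arrow.

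The deduction $\b=0$ then splits into two stages. In the first, I would iterate the mutation formula on the coordinate $b_j^{(3)}$, producing a chain of inequalities $b_j^{(3)}\le b_j^{(2)}\le b_j^{(1)}\le b_j\le 0$ coming from repeated $[\,\cdot\,]_+$-terms whose arguments are a priori non-positive. The $Q^{op}$-annulus property forces $b_j^{(3)}\ge 0$, hence $b_j^{(3)}=0$, which collapses the entire chain and pins down the signs of many $b_k$'s as well as the vanishing of $b_i$ and $b_j$. In the second stage I would evaluate $b_i^{(3)}$ using these sign data: the $[\,\cdot\,]_+$ terms simplify to their arguments, and the resulting expression rearranges into a sum of non-negative multiples of the remaining $b_k$'s, whose common vanishing follows from $b_i^{(3)}=0$.

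The main obstacle is purely combinatorial: identifying the correct composite $\mu$ and verifying that $\mu(Q)=Q^{op}$, while keeping the sign bookkeeping consistent through the longer wing, where one extra intermediate mutation enters the computation of $b_k^{(2)}$ for $k$ lying on that wing. I expect no new phenomena beyond the $E_6^{(1,1)}$ case, because the additional wing vertex only contributes one further non-negative summand to the terminal expression for $b_i^{(3)}$, which is still forced to vanish by the annulus property for $Q^{op}$.
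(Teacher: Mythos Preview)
Your template is right, but the mutation word you propose is too short. In the $E_6^{(1,1)}$ proof the triple $\mu_*\circ\mu_\diamond\circ\mu_*$ reaches $Q^{op}$ because every wing is of type $A_2$; once a wing is longer this fails. For $E_7^{(1,1)}$ the paper takes the representative of Fig.~\ref{e7_11}, uses a genuinely bipartite split $\mu_*=\mu_5\mu_4\mu_3\mu_2\mu_1$ and $\mu_\diamond=\mu_8\mu_7\mu_6$ (rather than the ``inner vertices / outer vertices plus one end'' split you describe), and needs the \emph{five}-fold word
\[
\mu=\mu_*\circ\mu_\diamond\circ\mu_*\circ\mu_\diamond\circ\mu_*
\]
before $\mu(Q)=Q^{op}$ holds. (For $E_8^{(1,1)}$ the analogous word has length nine.) So the step ``$\mu(Q)=Q^{op}$'' cannot be asserted by analogy; you must check it for your chosen representative, and with only three composites it will not hold.

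Once the correct $\mu$ is in place, your two-stage deduction goes through exactly as you outline: the first stage becomes a chain $b_9'\le\cdots\le b_9\le 0$ with five inequalities instead of three, forcing $b_9'=0$ and fixing the signs of all intermediate quantities; the second stage then expresses $b_8'$ as a sum of non-negative terms that must individually vanish. No new phenomena appear, but the bookkeeping is noticeably longer than you anticipate.
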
  

\begin{proof}
  The proof follows the same scheme as the one for the case of $E_6^{(1,1)}$, we omit explicit computations as they are very similar to the previous case but much longer.
  
  We consider the quiver $Q$  shown in Fig.~\ref{e7_11}.
  Denote
  $$\mu_{*}=\mu_5\circ \mu_4 \circ \mu_3 \circ \mu_2 \circ \mu_1  \qquad \qquad \mu_\diamond=  \mu_8 \circ \mu_7 \circ \mu_6 $$
  and consider
  $$ \mu =\mu_*\circ \mu_\diamond \circ \mu_* \circ \mu_\diamond \circ \mu_*.$$

  Let $\b=(b_1,\dots,b_9)$ be an admissible vector. Denote by $\b'=(b_1',\dots,b_9')$ the result of mutation $\mu$. 
  One can check that $\mu(Q)=Q^{op}$.  Then by the annulus property for $Q$ we have
  $$ b_9=-b_8<=0,$$
  and by the annulus property for $\mu(Q)=Q^{op}$ we need
  $$ b_8'=-b_9'<=0.$$

  A computation similar to the one for $E_6^{(1,1)}$ shows that $b_9'\le b_9\le 0$, which implies $b_9'=0=b_8'$ and similar constrains on the summands. 
Computing then $b_8'$ in exactly the same way as for  $E_6^{(1,1)}$ we conclude that all $b_i=0$ for $i=1,\dots,9$.

\begin{figure}[!h]
\begin{center}
  \epsfig{file=./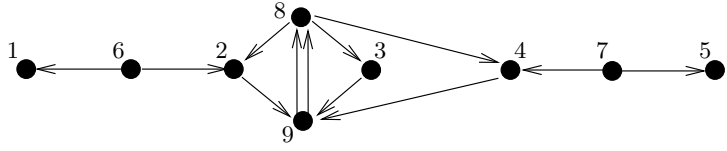,width=0.59\linewidth}
  \put(-272,28){\scriptsize 1}
  \put(-232,28){\scriptsize 6}
  \put(-193,28){\scriptsize 2}
  \put(-133,28){\scriptsize 3}
  \put(-80,28){\scriptsize 4}
  \put(-49,28){\scriptsize 7}
  \put(-10,28){\scriptsize 5}
  \put(-171,43){\scriptsize 8}
  \put(-168,-4){\scriptsize 9}
\caption{$Q=E_7^{1,1}$. }
\label{e7_11}
\end{center}
\end{figure}
\end{proof}

\subsection{Mutation class of $E_8^{(1,1)}$}
\begin{theorem}
  \label{t_e8_11}
  There is no admissible vector for a quiver in the mutation class of $E_8^{(1,1)}$.

\end{theorem}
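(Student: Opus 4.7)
The approach follows exactly the template set by the proofs for $E_6^{(1,1)}$ and $E_7^{(1,1)}$: pick a convenient representative $Q$ in the mutation class, split its vertex set into a bipartition by commuting mutations, find a mutation sequence of the form $\mu = \mu_*\circ \mu_\diamond\circ\mu_*\circ\cdots\circ\mu_*$ (an alternation ending in $\mu_*$) that sends $Q$ to $Q^{op}$, and then confront the annulus property for the double arrow of $Q$ with the annulus property for the corresponding double arrow of $Q^{op}$ after the mutation sequence.

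First I would fix a representative $Q$ of the mutation class of $E_8^{(1,1)}$ analogous to Fig.~\ref{e6_11} and Fig.~\ref{e7_11}, containing a double arrow $v_{10}\!=\!\!>\!v_9$ (using the labelling convention in which the last two vertices carry the double arrow). The annulus property then gives $b_{10}=-b_9\le 0$. I would define $\mu_*$ and $\mu_\diamond$ as products of mutations at the vertices of the two colour classes of the underlying bipartition of the mutable subquiver, so that the factors inside $\mu_*$ commute and the factors inside $\mu_\diamond$ commute. By the same pattern as for $E_6^{(1,1)}$ (3 factors) and $E_7^{(1,1)}$ (5 factors), I expect the mutation sequence $\mu=\mu_*\circ(\mu_\diamond\circ\mu_*)^{r}$ to realise $Q\mapsto Q^{op}$ for an appropriate small integer $r$ (most likely $r=3$), which I would verify by direct inspection of the quiver at each stage.

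Next I would imitate the two-step computation from the $E_6^{(1,1)}$ proof. Let $\b^{(k)}$ denote the coefficient vector after the first $k$ composite mutations, and compute $b_9^{(k)}$ step by step. Each application of a $\mu_i$ for $i$ adjacent to $v_9$ adds $-[-b_i^{(k-1)}]_+$ to $b_9^{(k-1)}$, hence the sequence $b_9,b_9^{(1)},b_9^{(2)},\ldots,b_9^{(3)}$ (using $b_9'$ for the final value) is non-increasing. The annulus property $b_9'=-b_{10}'\le 0$ and the condition $b_{10}'\ge 0$ forces $b_9'=0$, which in turn forces every summand in the chain of inequalities to vanish. This pins down the signs of all the intermediate $b_i^{(k)}$ at the vertices used by $\mu_\diamond$ and $\mu_*$, exactly as in~(\ref{(3)}).

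Finally, with all signs known I would compute $b_{10}'$ explicitly in terms of $b_1,\ldots,b_{10}$: the same inductive bookkeeping as for $E_6^{(1,1)}$ expresses $b_{10}'$ as a sum of non-negative contributions, each a non-negative linear combination of the $b_i$. The equation $b_{10}'=0$ then forces every $b_i=0$, proving the theorem. The main obstacle is purely bookkeeping: verifying that the proposed $\mu$ sends $Q$ to $Q^{op}$, and carrying the monotonicity/sign argument through a longer chain of composite mutations than in the $E_7^{(1,1)}$ case. No new conceptual ingredient is needed beyond the annulus property and the mechanics of quiver mutation; the proof can be written essentially by saying ``proceed as in the proof of Theorem~\ref{t_e6_11}, with the mutation sequence described above'' and omitting the lengthy but routine calculation.
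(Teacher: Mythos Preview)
Your plan is essentially the paper's proof, but two concrete details are off.

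First, the length of the word: the pattern $3,5,\ldots$ does not continue to $7$. For the paper's representative $Q$ of type $E_8^{(1,1)}$ one needs
\[
\mu=\mu_*\circ\mu_\diamond\circ\mu_*\circ\mu_\diamond\circ\mu_*\circ\mu_\diamond\circ\mu_*\circ\mu_\diamond\circ\mu_*,
\]
i.e.\ $r=4$ in your notation, with $\mu_*=\mu_8\mu_6\mu_4\mu_3\mu_2$ and $\mu_\diamond=\mu_9\mu_7\mu_5\mu_1$. Your ``most likely $r=3$'' would not give $Q^{op}$; since you say you would verify this by inspection, this is harmless, but be aware.

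Second, and more importantly, the vertex you track in the monotonicity argument is the wrong one. In the $E_6^{(1,1)}$ template the key point is that the \emph{source} $v_8$ of the double arrow (with $b_8\le 0$) is never mutated, so each composite step only subtracts terms of the form $[-b_i]_+$ and $b_8^{(k)}$ is non-increasing; combined with $b_8^{(3)}\ge 0$ from the annulus property in $Q^{op}$ this forces $b_8^{(3)}=0$. In your $E_8^{(1,1)}$ setup the analogous never-mutated vertex is $v_{10}$, not $v_9$; note that $v_9$ sits in $\mu_\diamond$ (so $b_9$ flips sign under $\mu_\diamond$ and is not monotone), and in any case you start with $b_9\ge 0$, so ``non-increasing'' together with ``$b_9'\le 0$'' does not pin down $b_9'=0$. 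Relatedly, the bipartition you describe should omit $v_{10}$ rather than cover all ten vertices. Once you track $b_{10}$ instead, your two-step argument goes through exactly as in the $E_6^{(1,1)}$ case, and the paper's proof indeed just says ``as before, $\mu(Q)=Q^{op}$, and an explicit computation shows that the annulus property fails for $\mu(Q)$ unless $\b=0$''.
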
  

\begin{proof}
  The proof is very similar to the one for $E_6^{(1,1)}$ and $E_7^{(1,1)}$.
  We consider the quiver $Q$  shown in Fig.~\ref{e8_11}.
  Denote
  $$\mu_{*}=\mu_8\circ \mu_6 \circ \mu_4 \circ \mu_3 \circ \mu_2  \qquad \qquad \mu_\diamond=  \mu_9 \circ \mu_7\circ \mu_5 \circ \mu_1 $$
  and consider
  $$ \mu =\mu_*\circ \mu_\diamond \circ \mu_* \circ \mu_\diamond \circ \mu_*\circ \mu_\diamond \circ \mu_* \circ \mu_\diamond \circ \mu_*.$$
As before, $\mu(Q)=Q^{op}$, and an explicit computation shows that the annulus property does not hold for $\mu(Q)$ unless $\b=0$. 

\begin{figure}[!h]
\begin{center}
  \epsfig{file=./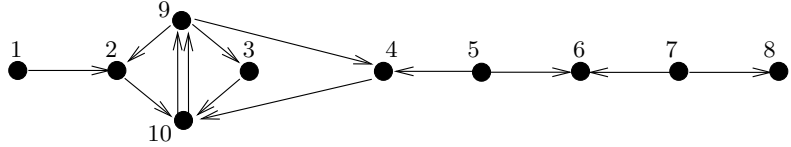,width=0.65\linewidth}
  \put(-295,28){\scriptsize 1}
  \put(-259,28){\scriptsize 2}
  \put(-207,28){\scriptsize 3}
  \put(-153,28){\scriptsize 4}
  \put(-122,28){\scriptsize 5}
  \put(-82,28){\scriptsize 6}
  \put(-47,28){\scriptsize 7}
  \put(-10,28){\scriptsize 8}
  \put(-239,43){\scriptsize 9}
  \put(-243,-4){\scriptsize 10}
\caption{$Q=E_8^{1,1}$. }
\label{e8_11}
\end{center}
\end{figure}
  \end{proof}  

\section{Mutation classes of $X_6$ and $X_7$}
\label{x}

In this section, we show that there are no admissible vectors for mutation classes $X_6$ and $X_7$. The proof is similar to the one for extended affine quivers.

\begin{theorem}
  \label{t_x7}
There is no admissible vector for a quiver in the mutation class of $X_7$.
\end{theorem}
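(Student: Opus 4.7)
The plan is to mirror the strategy developed for the extended affine cases in Theorems~\ref{t_e6_11}--\ref{t_e8_11}. Start from the representative $Q$ of the mutation class of $X_7$ displayed in Fig.~\ref{x7}, which contains (at least) one double arrow $v_i\!=\!\!>\!v_j$. If $\bm b=(b_1,\dots,b_7)$ were admissible, then by Corollary~\ref{cor-ann} we would already have the annulus inequality
\[
b_i=-b_j\le 0.
\]
The aim is to exhibit a mutation sequence $\mu$ so that $\mu(Q)$ is isomorphic to $Q^{op}$ (or at least contains a double arrow with the opposite orientation on the ``same'' pair of vertices), and then derive from the two competing annulus inequalities that $\bm b=0$.

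Concretely, I would look for two composite mutations $\mu_*$ and $\mu_\diamond$, each a product of pairwise commuting mutations at an independent set of vertices, such that an alternating product $\mu=\mu_*\circ\mu_\diamond\circ\mu_*\circ\cdots$ reverses all arrows of $Q$. This is exactly the pattern in the $E_6^{(1,1)},E_7^{(1,1)},E_8^{(1,1)}$ proofs, and the high internal symmetry of $X_7$ (two trivalent vertices connected by a double arrow, with three pendant ``wings'') makes such a sequence natural to search for by splitting the vertex set into the two mutation-commuting halves that partition the ``bipartite'' structure of $Q$. Since $X_7$ has only seven vertices, the candidate sequences can be enumerated by inspection of the mutation class, so fixing an explicit $\mu$ with $\mu(Q)=Q^{op}$ is a finite search.

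Once $\mu$ is in hand, I would track the coefficients $b^{(s)}_k$ at each intermediate step, following the bookkeeping used in the proof of Theorem~\ref{t_e6_11}. The annulus property on $Q$ gives the sign datum $b_i\le 0$, $b_j\ge 0$ at the start. After applying $\mu$, the annulus property on $\mu(Q)=Q^{op}$ reverses the roles and demands $b^{(\mathrm{final})}_j\le 0$. Expanding the mutation formulas, the coordinate $b^{(\mathrm{final})}_j$ turns out to be bounded above by a chain $b^{(\mathrm{final})}_j\le\cdots\le b_j\le 0$ in which each step subtracts a term of the form $-[-b^{(s)}_\ell]_+$. Forcing this chain to be an equality collapses every such $[-b^{(s)}_\ell]_+$ to zero, giving a list of sign constraints $b^{(s)}_\ell\ge 0$ on many intermediate coordinates.

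Finally I would compute $b^{(\mathrm{final})}_i$ using these constraints, and observe, exactly as in the $E_k^{(1,1)}$ arguments, that it becomes a sum of manifestly non-negative quantities expressed in the original $b_k$'s, whose vanishing forces each $b_k=0$. The main obstacle is purely combinatorial: identifying the correct representative of $X_7$ together with an explicit mutation sequence $\mu$ with $\mu(Q)=Q^{op}$, and verifying that the alternating pattern of $\mu_*$ and $\mu_\diamond$ keeps all intermediate tropical signs under control so that the $[x]_+$ terms evaluate unambiguously. Once the sequence is recorded, the remainder is a mechanical iteration of the mutation rule, entirely analogous to the $E_8^{(1,1)}$ calculation in Theorem~\ref{t_e8_11}.
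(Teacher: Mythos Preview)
Your proposal is a sketch rather than a proof, and it rests on a mistaken picture of the quiver. The representative of $X_7$ in Fig.~\ref{x7} is not ``two trivalent vertices connected by a double arrow with three pendant wings'': it has a single central vertex $v_0$ of valency six, with three oriented triangles $v_0v_1v_2$, $v_0v_3v_4$, $v_0v_5v_6$, each containing a double arrow. So there are \emph{three} double arrows from the outset, and the bipartite splitting you describe does not exist. You also leave the essential work---finding the explicit sequence $\mu$ and carrying out the coefficient computation---entirely undone; for $X_7$ there is a further obstacle to the $E_k^{(1,1)}$ template, namely that short mutation sequences move the double arrows around (e.g.\ after $\mu_2\mu_1\mu_0$ they sit at $v_0v_2,v_3v_6,v_4v_5$), so the monotone chain $b_j^{(\mathrm{final})}\le\cdots\le b_j$ cannot be anchored at a fixed pair of vertices the way it is in the $E$ proofs.

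The paper takes a different and shorter route that exploits the threefold symmetry directly. The annulus property for the three initial double arrows gives $b_1=-b_2\le 0$, $b_3=-b_4\le 0$, $b_5=-b_6\le 0$. One then applies each of the three short sequences $\mu_{012},\mu_{034},\mu_{056}$; in each resulting quiver the annulus property for the two newly created double arrows yields two linear equations, e.g.\ $b_3+b_6+2b_0=0$ and $b_4+b_5+2b_0=0$ from $\mu_{012}$. Summing all six equations forces $b_0=0$, and then the system collapses to $b_1=b_3=b_5=-b_2=-b_4=-b_6$. A single further inequality from the annulus property in $\mu_{012}(Q)$ then gives $b_3=0$, hence $\bm b=0$. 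No search for a sequence to $Q^{op}$ is needed; the argument is three applications of length-three mutation sequences plus elementary linear algebra.
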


\begin{proof}
  The idea is similar to the one we used for the case of $E_6^{(1,1)}$.
  
  Consider the quiver $Q$ of type $X_7$ shown in Fig.~\ref{x7}. From the annulus property for three double arrows we get
$$
b_1=-b_2\le 0 \qquad b_3=-b_4\le 0 \qquad b_5=-b_6\le 0.
$$

  The composition of mutations
  $$\mu_{012}=\mu_2\circ \mu_1 \circ \mu_0$$ takes $Q$ to an isomorphic quiver with different 
location of double arrows (after $\mu_{012}$ the double arrows will be $v_0v_2, v_3v_6$ and $v_4v_5$). 
The annulus property for the mutated quiver $\mu_{012}(Q)$ after computing all entries would result in the following equations:
$$ b_3+b_6+2b_0=0 \qquad \qquad  b_4+b_5+2b_0=0$$
(the third equation will be $b_1+b_2=0$ which is satisfied automatically).

By symmetry, an application of another composition of mutations $\mu_{034}$ leads to the equations
$$ b_1+b_6+2b_0=0 \qquad \qquad  b_2+b_5+2b_0=0,$$
and, similarly, one obtains  from $\mu_{056}$ that
$${\phantom{.}}b_3+b_2+2b_0=0 \qquad \qquad  b_4+b_1+2b_0=0.$$

Adding all six equations together we get
$$2(b_1+b_2)+2(b_3+b_4)+2(b_5+b_6)+12b_0=0
$$
which implies $b_0=0$, and thus the six equations above result in $b_1=b_3=b_5=-b_2=-b_4=-b_6\le 0$.

So far, we have only used equalities arising from the annulus property but not the inequalities. Computing the value of $b_3$ after mutation $\mu_{012}$ (call it $b_3'$), one can find that if all assumptions from above hold then $b_3'\le 0$, while from the annulus property for $\mu_{012}(Q)$ one gets $b_3'\ge 0$. This implies $b_3'=0$, which can hold in the only case of $b_3=0$ (similarly to $E$ case), and  hence $b_i=0$ for all $i\in\{0,1,2,\dots,6\}$.
\end{proof}

\begin{figure}[!h]
\begin{center}
  \epsfig{file=./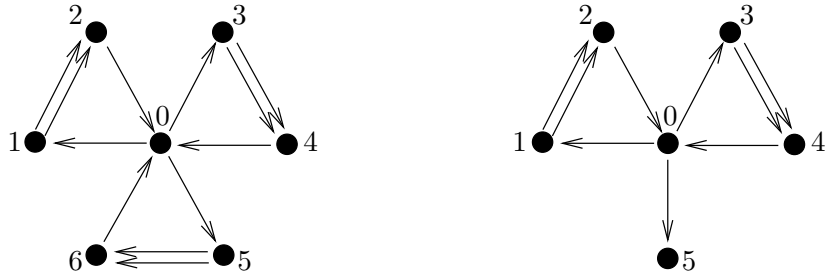,width=0.65\linewidth}
  \put(-246,55){\small 0}
  \put(-302,45){\small 1}
  \put(-279,93){\small 2}
  \put(-217,93){\small 3}
  \put(-190,45){\small 4}
  \put(-215,0){\small 5}
  \put(-279,0){\small 6}
  \put(-54,55){\small 0}
  \put(-111,45){\small 1}
  \put(-86,93){\small 2}
  \put(-25,93){\small 3}
  \put(2,45){\small 4}
  \put(-47,0){\small 5}
\caption{Quivers $X_7$ (left) and $X_6$ (right). }
\label{x7}
\end{center}
\end{figure}

\begin{theorem}
\label{t_x6}
  Let $Q$ be a quiver of type $X_6$ with a single arrow from $v_0$ to $v_5$, where $v_5$ is a leaf (see Fig.~\ref{x7}). Then a vector $\boldsymbol{b}$ is admissible if and only if $b_5=-2b_0\ge 0$ and all other $b_i$ vanish. Admissible vectors for all representatives of the mutation class are shown in Fig.~\ref{pic-x6}.
\end{theorem}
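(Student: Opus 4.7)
The plan is to mimic the strategy of Theorem~\ref{t_x7}: extract enough constraints from the annulus property applied to $Q$ and to suitable mutations of $Q$ to pin down the admissible vectors, and then verify sufficiency by identifying one specific extended quiver with $X_7$ and exploiting linear scaling.

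For necessity, I would first apply the annulus property to the two double arrows $v_1\!\Rightarrow\!v_2$ and $v_3\!\Rightarrow\!v_4$ of $Q$, obtaining $b_1=-b_2\le 0$ and $b_3=-b_4\le 0$. The next step is to compute $\mu_{012}(Q)=\mu_2\circ\mu_1\circ\mu_0(Q)$: a routine calculation shows that $\mu_{012}(Q)$ is again of type $X_6$, now with double arrows $v_0\!\Rightarrow\!v_1$ and $v_4\!\Rightarrow\!v_5$ (with $v_2$ playing the new central role and $v_3$ the new leaf). Writing out the transformed coefficient vector $\b^{(3)}$ using the mutation formula and simplifying via the identities $[x]_+-[-x]_+=x$ and $[x]_++[-x]_+=|x|$ together with the original annulus equalities, the new annulus equality $b^{(3)}_4=-b^{(3)}_5$ collapses to
$$
b_5=b_3-2b_0,
$$
while the sign condition $b^{(3)}_4\le 0$ splits on the sign of $b_0$: if $b_0\ge 0$ it reads $b_4+2b_0\le 0$, which together with $b_4\ge 0$ forces $b_0=b_4=0$; if $b_0<0$ it reduces to $b_4\le 0$, which together with $b_4\ge 0$ forces $b_4=0$ and hence $b_3=0$. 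Next, I would invoke the manifest $\mathbb{Z}/2$ symmetry of $Q$ swapping the two triangles ($v_1\leftrightarrow v_3$, $v_2\leftrightarrow v_4$, fixing $v_0,v_5$) applied to the mutation $\mu_{034}=\mu_4\circ\mu_3\circ\mu_0$ to derive, by the same computation, $b_1=b_2=0$ in the case $b_0<0$ and $b_0=b_1=b_2=0$ in the case $b_0\ge 0$. Combining, the case $b_0\ge 0$ collapses to $\b=0$ (not admissible by Definition~\ref{def_b}), while the case $b_0<0$ yields exactly the announced form $b_1=b_2=b_3=b_4=0$, $b_5=-2b_0$.

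For sufficiency, the vector $\b_{(1)}=(-1,0,0,0,0,2)$ adjoins $q$ to $Q$ via one arrow $q\to v_0$ and a double arrow $v_5\!\Rightarrow\!q$, completing the triangle $v_0\to v_5\!\Rightarrow\!q\to v_0$ and turning the extended quiver into exactly $X_7$ (Fig.~\ref{x7}, left); since $X_7$ is mutation-finite, so is the ice quiver obtained by freezing $q$, which shows $\b_{(1)}$ is admissible. I would then use the fact that the mutation formula for a coefficient row is homogeneous of degree one under positive scaling, so the mutation class of the ice quiver with coefficients $k\b_{(1)}$ is simply $k$ times the mutation class for $k=1$ and hence finite. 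This yields admissibility of all $(-k,0,\dots,0,2k)$ with $k\ge 1$, completing the if-and-only-if. The description of admissible vectors for the other representatives of the mutation class promised in Fig.~\ref{pic-x6} then follows by pushing the vectors above forward along arbitrary mutation sequences.

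The main obstacle is the bookkeeping for $\mu_{012}$: one must expand $\b^{(3)}$ through three layers of the mutation formula, track the intermediate quantities $b^{(1)}_{i1}$ and $b^{(2)}_{i2}$ at each step, and carefully split cases on the signs of $b_0$, $b^{(1)}_1$, and $b^{(2)}_2$. Unlike the $X_7$ situation, where the three-fold symmetry yielded enough symmetric equations to force the vector to zero with almost no case distinction, here only two of the analogous composite mutations are available and the leaf $v_5$ plays an asymmetric role; this is precisely what both permits a one-parameter family of admissible vectors and forces the proof to branch on the sign of $b_0$.
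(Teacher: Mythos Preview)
Your proof is correct and in fact more streamlined than the paper's on both halves of the argument.

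For necessity, the paper uses $\mu_{012}$ and $\mu_{034}$ only to extract the \emph{equalities} $b_4+b_5+2b_0=0$ and $b_2+b_5+2b_0=0$ (borrowed from the $X_7$ computation), obtaining $b_2=b_4=:\beta$; it then applies an additional mutation $\mu_5$ before re-running $\mu_{012}$ and $\mu_{034}$ to produce further equations, and finally splits on the sign of $b_5$, with one branch requiring yet another application of $\mu_{012}$ to be ruled out. You instead exploit the \emph{inequality} half of the annulus property at $v_4\!\Rightarrow\! v_5$ in $\mu_{012}(Q)$ immediately: since $b_4^{(3)}=b_4+2[b_0]_+$, the condition $b_4^{(3)}\le 0$ forces $b_4=0$ in every case (and $b_0=0$ when $b_0\ge 0$), and the $\mathbb{Z}/2$ symmetry with $\mu_{034}$ finishes. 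This bypasses the $\mu_5$ step and the secondary case analysis entirely.

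For sufficiency, the paper invokes the explicit generators of the cluster modular group of $X_6$ from~\cite{I} and checks that each leaves the candidate vector invariant. Your argument is more elementary: the case $\b_{(1)}=(-1,0,0,0,0,2)$ completes the quiver to $X_7$, whose mutation-finiteness is already established, and positive-homogeneity of the coefficient mutation formula then handles all $k\b_{(1)}$. This avoids any appeal to~\cite{I}, at the cost of relying on the (known) finiteness of $X_7$ rather than proving finiteness intrinsically. Both arguments are valid; yours is shorter and self-contained within the paper.
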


\begin{figure}[!h]
\begin{center}
 \includegraphics[width=.9\textwidth]{./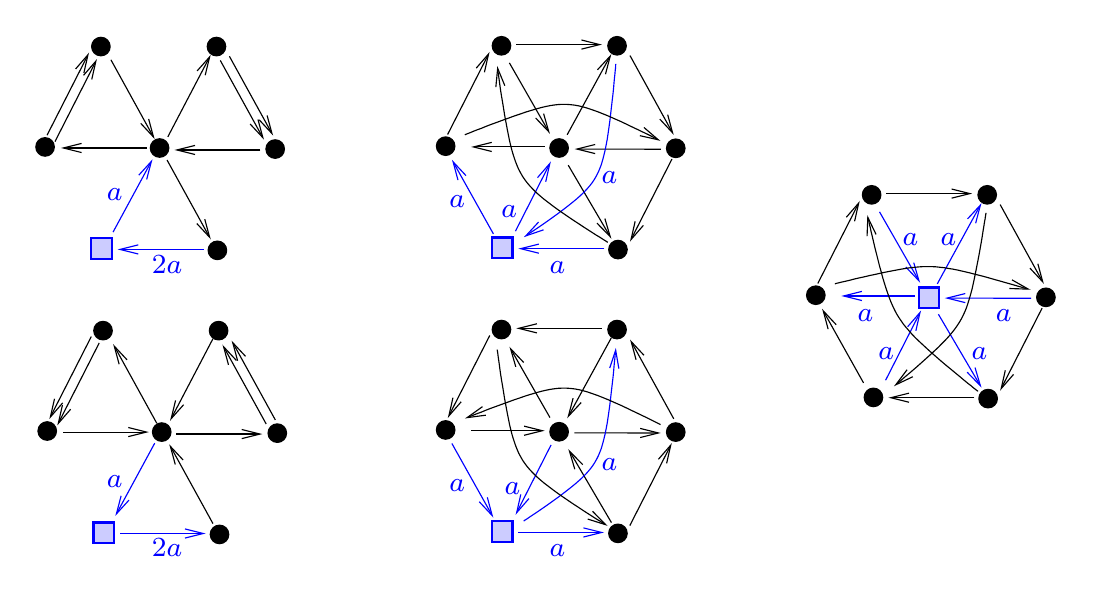}
\caption{Admissible vectors of all five quivers of type $X_6$, $a\in\mathbb N$.  }
\label{pic-x6}
\end{center}
\end{figure}

\begin{proof}
  Let $Q$ be as in Fig.~\ref{x7}, right. From the annulus property we have
  $$ b_1=-b_2\le 0  \qquad\text{and}   \qquad b_3=-b_4\le 0.$$

  \medskip
  \noindent
  {\bf Two equations from  $\mu_{012}$ and $\mu_{034}$.}
  Consider the sequences of mutations $\mu_{012}$ and $\mu_{034}$. By the annulus properties in the resulting quivers we get the following conditions:
  \begin{equation}\label{one}
    b_4+b_5+2b_0=0  \qquad \text{and}   \qquad b_2+b_5+2b_0=0. 
\end{equation}
    Notice that we do not need to compute anything here: the equations follow from the computation for $X_7$ restricted to $X_6$.
From this we conclude that $b_2=b_4$, i.e. $-b_1=-b_3=b_2=b_4=\beta$ for some $\beta\ge 0$.

  \medskip
  \noindent
  {\bf More equations from  $\mu_5$.}
To obtain more equations, we will first apply mutation $\mu_5$ to $X_6$, then $v_5$ will be a source instead of a sink, so the resulting quiver will be isomorphic to the subquiver of $X_7$ where the vertex $v_5$ is removed.

More precisely, denote by $b_i'$ the result of the application of $\mu_5$ to $\boldsymbol{b}$, and call the image of $b_5$ by $b_6'$ (to use the restriction of $X_7$). 
Then the mutations $\mu_{012}$ and $\mu_{034}$ will lead to the following two equations:
\begin{equation}
\label{two}
  b_3'+b_6'+2b_0'=0 \quad \text{and} \quad b_1'+b_6'+2b_0'=0.
\end{equation}
The entries here are computed from mutation $\mu_5$ as follows:
$$ b_1'=b_3'=-\beta  \qquad b_2'=b_4'=\beta \qquad  b_6'=-b_5  \qquad  b_0'=b_0+[b_5]_+,$$
and each of the equations in~(\ref{two}) leads to the following:
$$ -\beta-b_5+2b_0+2[b_5]_+=0.
$$
Since we also have $ \beta+b_5+2b_0=0$ from equation~(\ref{one}), we obtain the following equations:
$$
  4b_0+2[b_5]_+=0 \quad \text{and} \quad 2\beta+2b_5-2[b_5]_+=0,
$$
which can be simplified to $2b_0+[b_5]_+=0$ and $\beta=[-b_5]_+$. 

\medskip

  We now have two cases to consider: either $b_5\le 0$ or $b_5> 0$.
  If  $b_5\le 0$, then $b_0=0$ and $\beta=-b_5$, so we obtain a vector $$\boldsymbol{b}=(b_0,\dots,b_5)=(0,-\beta,\beta,-\beta,\beta,-\beta).$$
Applying mutation $\mu_{012}$, we obtain a quiver with a double arrow $v_4v_5$, and one can observe that the annulus property for this double arrow is not satisfied (unless $\beta=0$ which implies $\boldsymbol{b}=0$).

Therefore, we can assume $b_5>0$, so $\beta=0$ and $b_5=-2b_0$. We are left to show that all such vectors are admissible. The proof goes along the same lines as the proof of Theorem~\ref{t_af_e}: we check that every generator of the cluster modular group leaves vector $\boldsymbol{b}$ intact, where generators of the cluster modular group of the quiver of type $X_6$ shown in Fig.~\ref{x7} can be found in~\cite[Section 4.2]{I}.  

\end{proof}

\begin{remark}
  \label{x6-ann}
Observe that the violation of the annulus property is the only argument used in the proof of Theorem~\ref{t_x6} to show that a vector is not admissible. Therefore, we can conclude that for a quiver of type $X_6$ a vector is admissible if and only if the annulus property holds after every sequence of mutations. This observation will be generalized to all mutation-finite quivers in Section~\ref{sec_cr}. 
  \end{remark}

\section{Rank 2 quivers}
\label{rk2}

\begin{theorem}
  \label{t_rk2}
  Let $Q$ be a rank two quiver with the arrow from $v_1$ to $v_2$ of weight $a>0$. Let $\b=(b_1,b_2)$ be an integer vector. Then
  \begin{itemize}
  \item[(1)] if $a=1$ then $\b$ is admissible for any $b_1,b_2$; 
  \item[(2)] if $a=2$ then $\b$ is admissible if and only if $b_1=-b_2\le 0$;
  \item[(3)] if $a>2$ then there are no admissible vectors. 
    
  \end{itemize}
\end{theorem}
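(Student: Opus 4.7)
Parts (1) and (2) require no new arguments. If $a=1$ then $Q$ is of Dynkin type $A_2$, so every $\b$ is admissible by the Fomin--Zelevinsky proposition recalled in the introduction. If $a=2$ then $Q=\widetilde A_1$ is the quiver associated to the triangulated annulus $S_{1,1}$ (Example~\ref{annulus}), and Lemma~\ref{l_ann} with $p=q=1$ yields exactly the criterion $b_1=-b_2\le 0$.

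The substance is in (3). Assume $a>2$, take the initial arrow to be $v_1\to v_2$, and suppose $\b=(b_1,b_2)\ne 0$ is admissible. The plan is to study the composite mutation $\phi=\mu_1\circ\mu_2$, which returns the mutable part of $Q$ to its original form and hence acts on the set of admissible vectors. A direct application of the matrix-mutation formula shows that whenever the state satisfies $b_2\ge 0$ and $b_1+ab_2\ge 0$, the map $\phi$ is linear: it sends $(b_1,b_2)$ to $(-(b_1+ab_2),\,ab_1+(a^2-1)b_2)$. Equivalently, on this ``linearity region'' $\phi$ coincides with the matrix
\[
M=\begin{pmatrix}-1&-a\\a&a^2-1\end{pmatrix},\qquad \det M=1,\qquad \mathrm{tr}\,M=a^2-2.
\]

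For $a>2$ we have $\mathrm{tr}\,M>2$, so $M\in SL_2(\Z)$ is hyperbolic with distinct real eigenvalues $\lambda_\pm=\tfrac12\bigl((a^2-2)\pm a\sqrt{a^2-4}\bigr)$ satisfying $\lambda_+>1>\lambda_->0$ and $\lambda_+\lambda_-=1$; since $a^2-4$ is not a perfect square for integer $a>2$, the eigenvalues and the eigenslopes $r_\pm=(a\pm\sqrt{a^2-4})/2$ are irrational. The induced M\"obius action on the slope $r=b_2/|b_1|$ in the second quadrant is $r\mapsto\frac{(a^2-1)r-a}{ar-1}$, with repelling fixed point $r_-$ and attracting fixed point $r_+$. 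Using the identity $r_-(a-r_-)=r_-r_+=1$, a direct check shows that the cone $C=\{b_1\le 0,\ b_2\ge r_-|b_1|\}$ is forward-invariant under $M$; inside it, any nonzero integer $\b$ satisfies $\|M^n\b\|\to\infty$ exponentially with rate $\lambda_+$, because the contracting eigenline has irrational slope and so contains no nonzero lattice point.

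To complete the argument one still has to show that every nonzero $\b$ enters the cone $C$ after finitely many mutations; a short case analysis on the four sign patterns of $(b_1,b_2)$ does the job, using $\mu_1$ and $\mu_2$ to flip signs until one reaches a vector in the linearity region whose slope strictly exceeds $r_-$, after which iterating $\phi$ produces infinitely many distinct coefficient vectors and contradicts mutation-finiteness. I expect the piecewise-linear sign bookkeeping in this reduction step to be the main obstacle, while the hyperbolic linear algebra is transparent. As a sanity check, the same matrix $M$ becomes parabolic at $a=2$ (with the unique invariant line $b_1+b_2=0$ recovering the annulus property of part~(2)) and elliptic of order $3$ at $a=1$ (consistent with the finite-type case~(1)), so the trichotomy in the statement matches exactly the dynamical trichotomy of $M\in SL_2(\Z)$.
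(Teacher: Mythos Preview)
Your treatment of parts (1) and (2) matches the paper's. For part (3), however, you take a genuinely different route. The paper's argument is a two-line monotonicity estimate: after normalizing (via at most two mutations and a relabelling) to $b_2\ge 0\ge b_1$ with $|b_1|\ge |b_2|$, a single mutation $\mu_1$ gives
\[
b_2' \;=\; b_2 + ab_1 \;<\; b_2 + 2b_1 \;=\; (b_1+b_2) + b_1 \;\le\; b_1,
\]
so $|b_2'|>|b_1|$; after swapping labels the same hypotheses hold with strictly larger $\max(|b_1|,|b_2|)$, and one iterates. No eigenvalue analysis, no invariant cone, no case split is needed.

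Your $SL_2(\Z)$ approach is correct in outline and conceptually appealing --- the trichotomy elliptic/parabolic/hyperbolic at $a=1,2,>2$ is a nice structural explanation --- but it is considerably heavier than necessary, and you yourself flag that the reduction step (showing every nonzero integer vector can be mutated into the forward-invariant cone $C$) is not actually carried out. That step is not entirely trivial: you must track both the arrow direction and the two sign branches of the piecewise-linear map simultaneously, and verify that you land strictly above the repelling slope $r_-$ rather than on or below it. The paper's argument avoids this bookkeeping altogether by working with a single mutation rather than the composite $\phi=\mu_1\circ\mu_2$, and by using the elementary inequality $a>2$ directly instead of the spectral radius. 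If you want to keep the dynamical viewpoint, the cleanest fix is to note that the paper's normalization already places $\b$ in the region $\{b_1\le 0\le b_2,\ |b_1|\ge|b_2|\}$, which lies inside your linearity region and strictly above $r_-$ (since $r_-<1$), so from there your hyperbolic growth argument applies without further cases.
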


\begin{proof}
The first and second parts concern finite and affine types.

To prove the third part, notice that after at most two mutations (and swapping the labels of $v_1$ and $v_2$ if needed) we may assume that $Q=v_1\stackrel{a}\rightarrow v_2$, and $b_2\ge 0\ge b_1$. We may also assume that $|b_1|\ge |b_2|$ (otherwise replace $\mu_1$ with $\mu_2$ in the consideration below). 
Then after mutation $\mu_1$ we will get 
$$b_2'=b_2-a(-b_1)= b_2+ab_1<b_2+2b_1=(b_2+b_1)+b_1\le b_1,
$$
so, the absolute value of $b_2$ increases. Moreover, after swapping the labels of $v_1$ and $v_2$ the assumption above holds again, so we can mutate again to increase the components of the coefficient vector indefinitely.
\end{proof}

\section{Quivers from surfaces}
\label{q-surfaces}

In Section~\ref{surfaces} we gave a general characterization of admissible vectors via peripheral laminations. We now want to make this more explicit by describing admissible vectors for a special triangulation from every mutation class. We exclude from our consideration disks with at most two punctures and unpunctured annuli as these correspond to quivers of finite or affine type and thus were considered either in~\cite{FZ4} or in Section~\ref{aff}.      

If a surface has no boundary, then, by Corollary~\ref{no_bdry}, its quivers cannot have any admissible vector. Therefore, from now on we assume that a surface $S$ has at least one boundary component.

\begin{figure}[!h]
\begin{center}
  \epsfig{file=./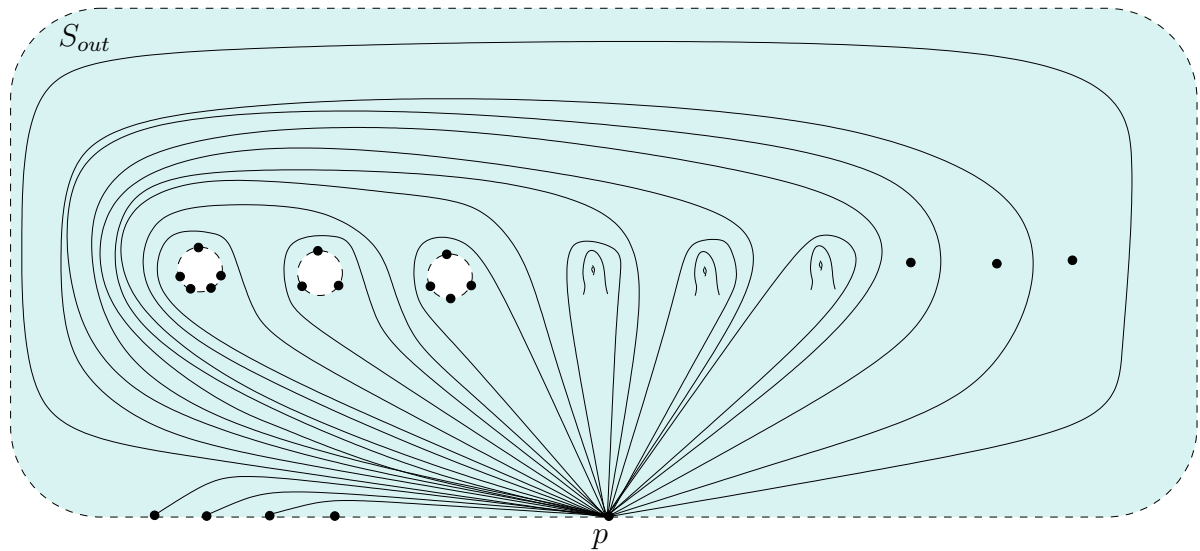,width=0.99\linewidth}
  \put(-432,180){$S_{out}$}
  \put(-230,-8){$p$}
\caption{Standard triangulation of a surface with at least one boundary component }
\label{st}
\end{center}
\end{figure}

A surface $S$ contains the following {\it features}: boundary components (each with a number of boundary marked points), punctures and handles. To construct the triangulation we do the following:

\begin{itemize}
\item[--] Choose any boundary component (we call it the {\it outer boundary component})  and a marked point $p$ on it.
All other boundary components will be called {\it inner} and the corresponding features will be called {\it holes}.  
\item[--] Place all  features along a line from left to right, first all holes, then all handles, then all punctures, as in  Fig.~\ref{st}, and enclose them by nested loops based at $p$ so that every feature (except for the leftmost one in Fig.~\ref{st}) lies inside a digon with both vertices at $p$ (recall that we excluded the case where S is a disc with one puncture).
\item[--] Triangulate the digons with features as follows:
  \begin{itemize}
  \item[-] each hole is enclosed by a loop $x_i$ and the domain inside $x_i$ triangulated as in Fig.~\ref{parts}, left;
  \item[-] each handle is enclosed by a loop $y_i$ and the domain inside $y_i$ triangulated as in Fig.~\ref{parts}, middle left;
  \item[-] each puncture inside a digon is connected by two arcs to two ends of the digon, see  Fig.~\ref{parts}, middle right;        
  \item[-] if there are no holes and handles, then the innermost monogon with two punctures is triangulated as in  Fig.~\ref{parts}, right;
  \item[-] if the outer boundary contains other marked points than $p$, then the outermost loop at $p$ separates a polygon (denote it $S_{out}$). $S_{out}$ is triangulated as shown in Fig.~\ref{st}. 
  \end{itemize}
\end{itemize}.

\begin{figure}[!h]
\begin{center}
  \epsfig{file=./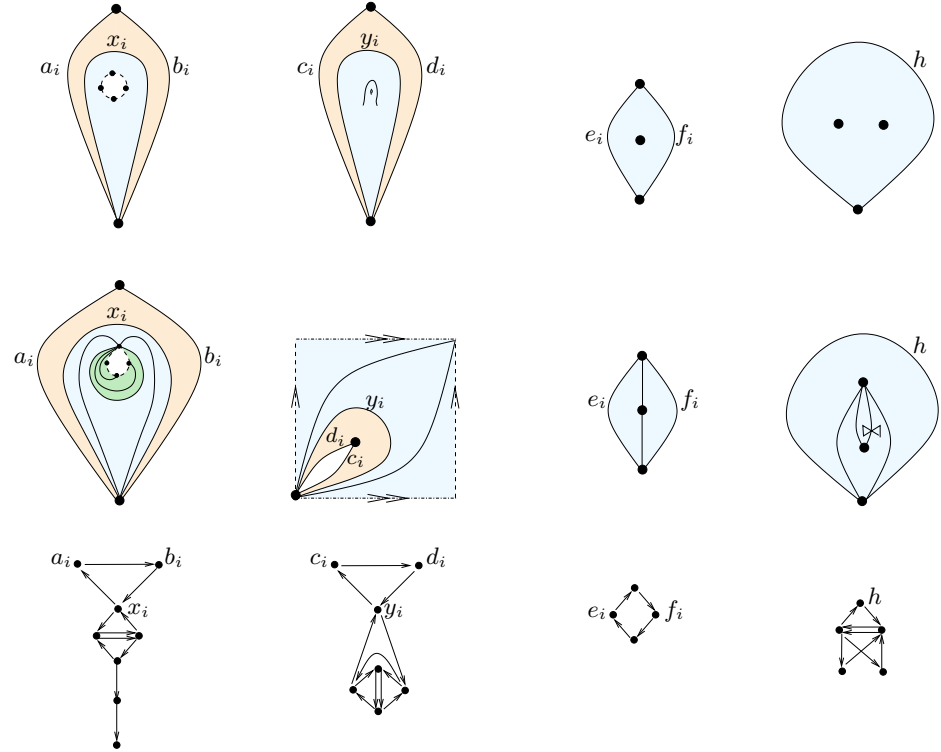,width=0.75\linewidth}
\put(-315,266){\scriptsize $x_i$ }  
\put(-340,256){\scriptsize $a_i$ }  
\put(-290,256){\scriptsize $b_i$ }  
\put(-219,267){\scriptsize $y_i$ }  
\put(-243,256){\scriptsize $c_i$ }  
\put(-194,256){\scriptsize $d_i$ }  
\put(-134,230){\scriptsize $e_i$ }  
\put(-100,230){\scriptsize $f_i$ }  
\put(-10,258){\scriptsize $h$ }  
\put(-315,163){\scriptsize $x_i$ }  
\put(-350,146){\scriptsize $a_i$ }  
\put(-278,146){\scriptsize $b_i$ }  
\put(-217,130){\scriptsize $y_i$ }
\put(-133,130){\scriptsize $e_i$ }  
\put(-98,130){\scriptsize $f_i$ }
\put(-10,150){\scriptsize $h$ }  
\put(-224,108){\tiny $c_i$ }  
\put(-232,116){\tiny $d_i$ }  
\put(-307,51){\scriptsize $x_i$ }  
\put(-336,70){\scriptsize $a_i$ }  
\put(-293,70){\scriptsize $b_i$ }  
\put(-210,51){\scriptsize $y_i$ }  
\put(-133,50){\scriptsize $e_i$ }  
\put(-104,50){\scriptsize $f_i$ }
\put(-27,55){\scriptsize $h$ }  
\put(-238,70){\scriptsize $c_i$ }  
\put(-194,70){\scriptsize $d_i$ }  
\caption{Features (top row), their standard triangulations (middle row) and corresponding quivers (bottom). Columns from left to right: a digon with a hole, a digon with a handle, a digon with a puncture, a monogon with two punctures.}
\label{parts}
\end{center}
\end{figure}

The quiver $Q$ corresponding to the standard  triangulation is shown in Fig.~\ref{st_q}.
It consists of the following elements built into a chain (from the right to the left): 
\begin{itemize}
\item[-] quiver $Q_{out}$ of triangulated outer polygon $S_{out}$; 
\item[-] quivers of  digons with  punctures;
\item[-] quivers of  digons with  handles;
\item[-] quivers of  digons with  holes;
\item[-] in case of absence of holes and handles, the leftmost element will be the quiver of a monogon with two punctures.   
\end{itemize}

\begin{figure}[!h]
\begin{center}
  \epsfig{file=./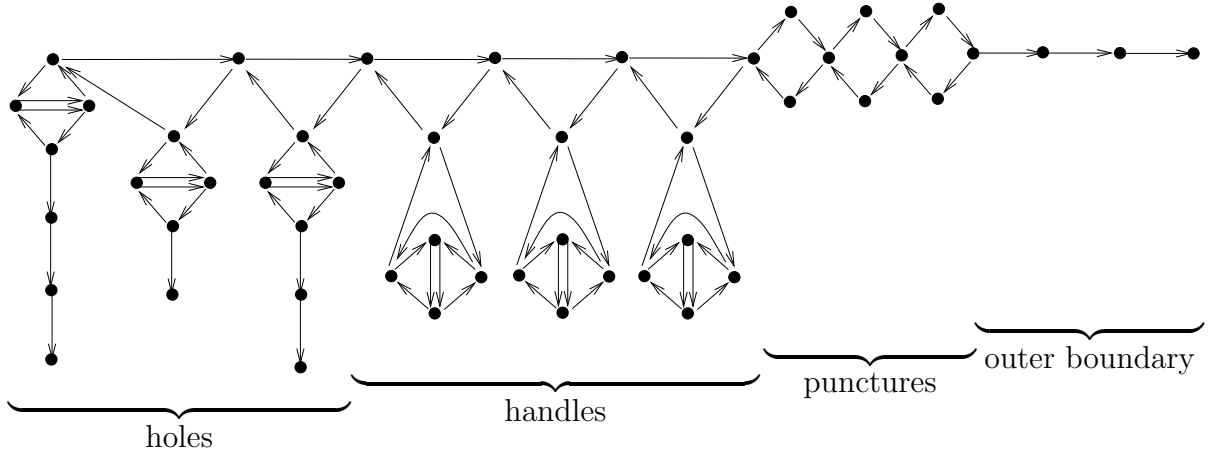,width=0.99\linewidth}
  \put(-89,20){  $ \underbrace{\phantom{aaaaaaaaaaaaaa}}_{\text{\normalsize outer boundary}}   $    }  
  \put(-169,10){  $ \underbrace{\phantom{aaaaaaaaaaaaa}}_{ \text{\normalsize punctures}}   $    }  
  \put(-325,-0){  $ \underbrace{\phantom{aaaaaaaaaaaaaaaaaaaaaaaaa}}_{\text{\normalsize handles}}   $    }  
  \put(-455,-10){  $ \underbrace{\phantom{aaaaaaaaaaaaaaaaaaaaa}}_{\text{\normalsize holes}}   $    }  
  
\caption{Quiver from standard triangulation}
\label{st_q}
\end{center}
\end{figure}

\begin{notation}
\label{not}  
We will highlight the following subquivers of $Q$, as in Fig.~\ref{q_not}:
\begin{itemize}
\item[-] $Q_{out}$: the subquiver of the outer polygon $S_{out}$ (if the outer component contains other marked points than $p$);
\item[-] two vertices, $v_1$ and $v_2$, connected to $Q_{out}$ (see Fig.~\ref{q_end} showing $v_1$ and $v_2$ depending on whether $S_{out}$ is empty and whether the first feature from the right is a hole, a handle or a puncture),
  the arcs corresponding to  $v_1$ and $v_2$ will be denoted by  $\gamma_1$ and $\gamma_2$;
  
\item[-] $Q_{in}$: the subquiver corresponding to the inner boundary components, i.e. $Q_{in}$ is spanned by all vertices corresponding to arcs of the triangulation with at least one endpoint on any of inner boundary components; 
\item[-] the subquiver $Q_I$ spanned by all other vertices of $Q$, where $I$ is the index set of vertices not lying in $Q_{out}$, $Q_{in}$ and different from $v_1$ and $v_2$.

\end{itemize}

\end{notation}

\begin{figure}[!h]
\begin{center}
  \epsfig{file=./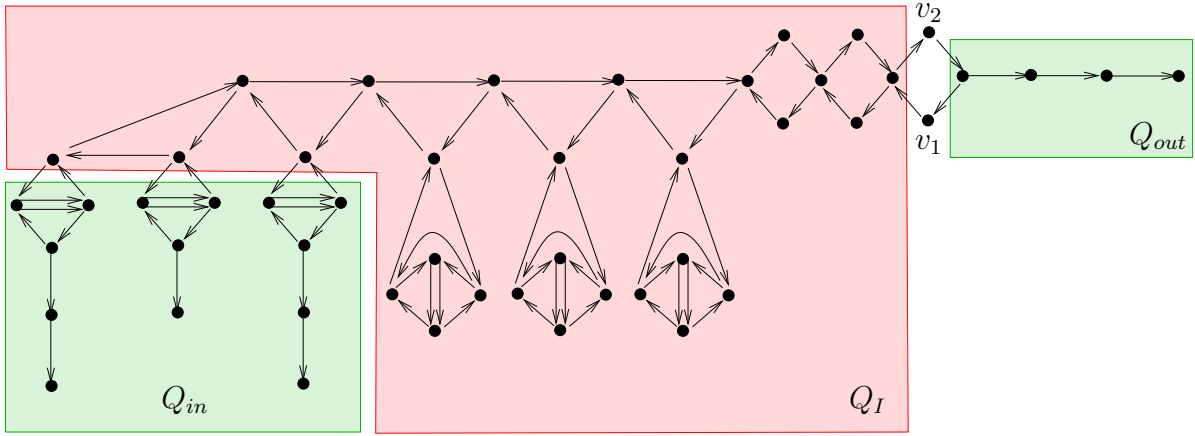,width=0.99\linewidth}
  \put(-29,110){ $Q_{out}$    }  
  \put(-110,108){ $v_1$    }  
 \put(-110,158){ $v_2$    }  
  \put(-135,10){ $Q_I$   }  
  \put(-395,10){ $Q_{in}$   }    
\caption{Notation: subquivers of the quiver for standard triangulation.}
\label{q_not}
\end{center}
\end{figure}

\begin{figure}[!h]
\begin{center}
  \epsfig{file=./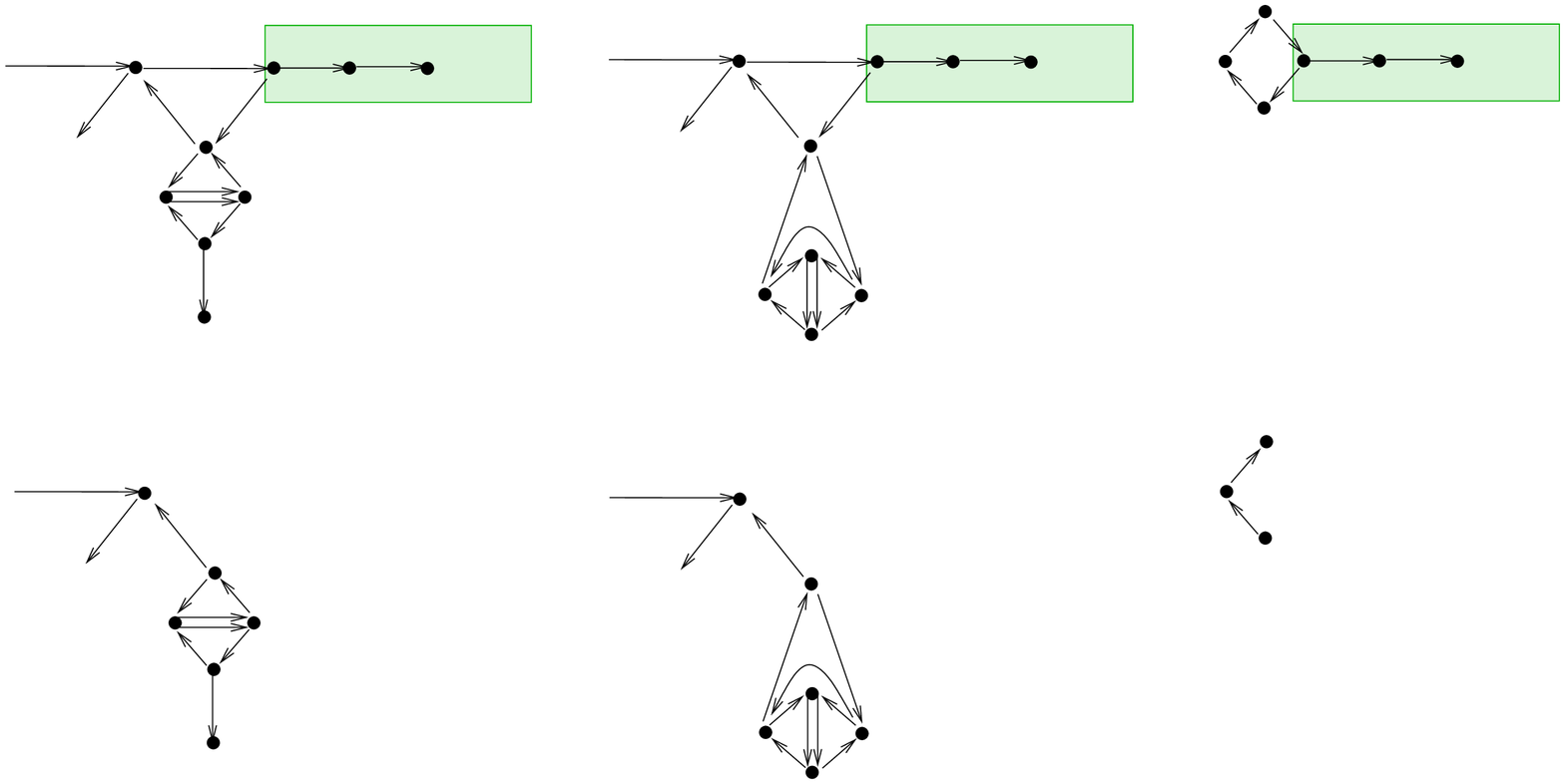,width=0.99\linewidth}
  \put(-449,2){ (a)    }  
  \put(-279,2){ (b)    }  
  \put(-109,2){ (c)    }  
  \put(-28,200){ $Q_{out}$    }  
   \put(-150,200){ $Q_{out}$    }  
   \put(-325,200){ $Q_{out}$    }  
  \put(-100,225){ $v_2$    }  
 \put(-100,185){ $v_1$    }  
  \put(-102,101){ $v_2$    }  
 \put(-100,61){ $v_1$    }  
  \put(-238,81){ $v_2$    }  
 \put(-219,58){ $v_1$    }  
  \put(-243,212){ $v_2$    }  
 \put(-218,178){ $v_1$    }  
  \put(-410,81){ $v_2$    }  
 \put(-391,58){ $v_1$    }  
  \put(-420,210){ $v_2$    }  
 \put(-391,178){ $v_1$    }  
 \caption{Vertices $v_1$ and $v_2$ for the cases when the rightmost feature is a hole (a), a handle (b), or a puncture (c), drawn for the case with $S_{out}\ne \emptyset$ (above) and for  $S_{out}= \emptyset$ (below).}
\label{q_end}
\end{center}
\end{figure}

\begin{theorem}
  \label{t_standard}
  Let  $S$ be a surface with at least one boundary component distinct from a  disk with at most two punctures and from an unpunctured annulus. 
  Suppose that $S$ is triangulated in the standard way.
  Then a coefficient vector $\b=(b_1,\dots,b_n)$ is admissible if and only if it satisfies the following conditions:
 \begin{itemize} 
\item[(a1)] $b_i=0$ for  $i\in I$; 
\item[(a2)] the annulus property is satisfied;
\item[(a3)] for the vertices $v_1$ and $v_2$ one has $b_1=-b_2\le 0$.  
  
 \end{itemize}  
\end{theorem}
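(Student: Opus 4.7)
My approach is to invoke Theorem~\ref{t_per} and translate admissibility of $\b$ into peripherality of the corresponding lamination $L$. Conditions (a1)-(a3) then become geometric statements about the standard triangulation, which I would verify in two directions.

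For the necessity direction, starting with a peripheral lamination $L$, condition (a2) is immediate from Corollary~\ref{cor-ann}. For (a1) and (a3), I would split $L$ into curves according to which boundary component they are isotopic to. Each curve peripheral to an inner boundary $H_j$ can be isotoped into the disk bounded by the enclosing loop $x_j$, so it meets only the arcs of $Q_{in}$ attached to $H_j$ and contributes zero on every arc in $Q_I$ and on $v_1, v_2$. Each curve peripheral to the outer boundary can be isotoped into a collar of that boundary, meeting only $Q_{out}$ together with the pair $v_1, v_2$; a case inspection using Fig.~\ref{q_end} shows that the closed curve parallel to the outermost loop contributes $(-1,+1)$ to $(b_1,b_2)$ while all other outer peripheral curves contribute $(0,0)$ to this pair. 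Summing over curves gives (a1) and (a3).

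For the sufficiency direction, assuming (a1)-(a3), I would use the bijection of~\cite[Theorem~13.6]{FT} between $\Z^n$ and laminations, and construct an explicit peripheral lamination $\tilde L$ with shear coordinate vector $\b$; uniqueness then forces $L = \tilde L$ to be peripheral, so Theorem~\ref{t_per} gives admissibility. The construction is modular. For each hole $H_j$, the restriction of $\b$ to the arcs of $Q_{in}$ attached to $H_j$ satisfies the annulus property at the double arrow $a_j\!=\!\!>\!b_j$ by (a2), so Lemma~\ref{l_ann} applied to the sub-annulus between $H_j$ and the enclosing loop $x_j$ produces a peripheral lamination realising these shears. For the outer boundary, (a3) plays the role of the annulus property at $v_1,v_2$, and the remaining entries on $Q_{out}$ are realised by peripheral arcs inside the outer polygon $S_{out}$. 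The disjoint union $\tilde L$ of all these pieces is peripheral, its support is disjoint from every arc indexed by $I$ so its $I$-coordinates vanish in accordance with (a1), and its shear coordinates agree with $\b$ everywhere.

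The main obstacle will be the handle case and the puncture-digon case, because in both the enclosing arc sits inside $Q_I$ rather than bounding an annulus against a boundary component, so Lemma~\ref{l_ann} does not apply directly. I would dispatch these by a local argument: any curve of $L$ entering such a feature must cross the corresponding enclosing loop $y_i$ or digon loop, and a direct shear-coordinate calculation on the standard triangulation of the feature (Fig.~\ref{parts}) shows that such a crossing forces a nonzero entry at some index in $I$, contradicting (a1). The monogon with two punctures is handled analogously. Together with the modular construction above, these local non-crossing statements complete the proof.
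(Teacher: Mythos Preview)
Your necessity direction matches the paper's argument closely. The real divergence is in sufficiency: the paper argues by contrapositive, taking a hypothetical non-peripheral lamination $L$ satisfying (a1)--(a3) and, through a chain of observations (O1)--(O7), showing that any non-peripheral curve $C_{np}\in L$ would have all shear coordinates zero (after a modification in $S_{out}$), contradicting the bijection of~\cite{FT}. You instead propose to \emph{construct} a peripheral $\tilde L$ with the prescribed shear vector and invoke uniqueness.

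The constructive strategy is legitimate in principle, but your treatment of the outer boundary has a genuine gap. You write that ``(a3) plays the role of the annulus property at $v_1,v_2$'' and that ``the remaining entries on $Q_{out}$ are realised by peripheral arcs inside the outer polygon $S_{out}$''. Neither step is as immediate as the hole case. First, there is no sub-annulus bounded by $\gamma_1$ and $\gamma_2$ to which Lemma~\ref{l_ann} applies: in the puncture case $\gamma_1,\gamma_2$ are arcs to a puncture, and in the loop case they sit side by side inside $\gamma_0$ rather than being nested. Second, not every shear vector on $Q_{out}$ is realised by curves lying entirely in $S_{out}$ with endpoints only on the actual outer boundary; any $Q_{out}$ arc whose quadrilateral has $\gamma_0$ as a side can receive one sign of shear only from curves that cross $\gamma_0$ and enter $S_{in}$. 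You would then need to verify that such curves can be completed to outer-peripheral curves in $S$ without disturbing $b_1,b_2$ or any $I$-coordinate (including $b_{\gamma_0}$). This is exactly the delicate content the paper packages into observations (O5)--(O7), which classify all curves that can contribute nontrivially to $b_1$ or $b_2$ and pin down the closed curve $C$ as the only one. Your outline does not supply this analysis, and without it the surjectivity of \{outer-peripheral laminations\} onto \{vectors satisfying (a1)--(a3)\} is unproved.

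A smaller issue: your ``main obstacle'' paragraph speaks of ``any curve of $L$ entering such a feature'', but in your constructive framework you are building $\tilde L$, not analysing $L$. For handles and punctures the constructive approach needs nothing here --- you simply put no curves in those features and the $I$-coordinates of $\tilde L$ vanish automatically. The argument you sketch (a crossing forces a nonzero $I$-entry) belongs to the contrapositive approach and is redundant once uniqueness is invoked.
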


To prove the theorem we will use the following terminology.

\begin{definition}
  Let $L$ be a lamination and $C\in L$ be a curve. Let $T$ be a triangulation. Then crossings of arcs of $T$ with $C$ cut $C$ into {\it subsegments}, and by a {\it segment} we  mean any connected union of subsegments of $C$ (with respect to $T$). Two consecutive subsegments form a {\it crossing} with $T$. A crossing is {\it non-trivial} if its input into   shear coordinates of $L$ is non-zero, otherwise it is {\it trivial}. In the latter case, both subsegments can be isotopically deformed to be contained in a small neighborhood of the same vertex $q$ of the corresponding quadrilateral, and will be called {\it $q$-local}. The crossing formed by two $q$-local subsegments will be  called $q$-local, as well as  any segment formed of $q$-local subsegments.
  
\end{definition}  

We make the following elementary observation:

\begin{prop}
\label{none}  
  Let $T$ be a triangulation of a marked surface and $L$ be a lamination. Choose $\gamma_i \in T$, and suppose that there exists a non-trivial crossing of $\gamma_i$ with a curve $C\in L$.  Then $b_i(L)\ne 0$ and $\sgn(b_i(C))=\sgn(b_i(L))$.

\end{prop}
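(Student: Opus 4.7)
The plan is to exploit a geometric incompatibility inside the quadrilateral $R$ surrounding $\gamma_i$: the two non-trivial crossing patterns shown in Fig.~\ref{shear} (the one contributing $+1$ and the one contributing $-1$) cannot coexist among the curves of a single lamination. Once this is established, the statement follows immediately from counting signs, and in fact it explains the parenthetical remark ``Note that at most one of these two types of intersection can occur'' appearing in the definition of shear coordinates.

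First I would fix $R$ with diagonal $\gamma_i$ and recall the definition: a subsegment of a curve in $L$ traversing $R$ contributes $\pm 1$ to $b_i(L)$ exactly when it enters and leaves $R$ through one of the two prescribed pairs of \emph{opposite} sides; otherwise the entry and exit sides share a common vertex $q$ of $R$ and the subsegment is $q$-local, hence trivial. Next comes the key topological observation: label the sides of $R$ in cyclic order as $s_1,s_2,s_3,s_4$, so the two opposite pairs are $\{s_1,s_3\}$ and $\{s_2,s_4\}$. Any arc inside $R$ realising the $+1$ pattern has its two endpoints on one of these pairs, while any arc realising the $-1$ pattern has its endpoints on the other. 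The four endpoints therefore alternate around $\partial R$, and two arcs in a disk with interleaved boundary endpoints necessarily cross transversally. Moreover this crossing cannot be eliminated by any isotopy of the arcs rel their boundary sides, since it is forced by the cyclic order of endpoints.

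Since the curves of a lamination are pairwise disjoint and simple, and we always choose representatives minimising intersections with $T$, it follows that at the fixed arc $\gamma_i$ all non-trivial crossings produced by curves of $L$ are of the same type, contributing the same sign $\varepsilon\in\{+1,-1\}$ to $b_i(L)$. Applying the same argument to the sublamination $\{C\}$, every non-trivial crossing of $C$ with $\gamma_i$ also contributes $\varepsilon$. By hypothesis at least one such non-trivial crossing exists, so $b_i(C)$ and $b_i(L)$ are non-empty sums whose terms are all equal to $\varepsilon$, giving $b_i(L)\ne 0$ and $\sgn(b_i(C))=\sgn(b_i(L))=\varepsilon$.

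The only delicate point is making sure that passing to intersection-minimising representatives of $L$ does not secretly destroy the hypothesised non-trivial crossing. This is automatic, because each of the two non-trivial patterns realises an essential (non-removable) intersection of the curve with $\gamma_i$ in its homotopy class rel $M$, so minimising intersections preserves these crossings; I do not expect any serious obstacle here.
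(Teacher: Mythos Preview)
Your argument is correct and is exactly the paper's approach: the paper's proof is the single sentence ``segments inducing crossings of different signs inside a quadrilateral with diagonal $\gamma_i$ intersect each other,'' which is precisely your alternating-endpoints observation spelled out in detail. The only thing the paper adds that you omit is a remark that the case of self-folded triangles is treated similarly.
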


The proof immediately follows from the definition of shear coordinates: segments inducing crossings of different signs inside a quadrilateral with diagonal $\gamma_i$ intersect each other. The case of self-folded triangles is treated similarly.

\begin{proof}[Proof of Theorem~\ref{t_standard}]
 In view of Theorem~\ref{t_per}, we need to show that the conditions in the theorem hold if and only if the lamination is peripheral. The plan of the proof will be similar to the one of Lemma~\ref{l_ann}.  

 We will consider the arcs
 $\gamma_1$ and $\gamma_2$  corresponding to vertices $v_1$ and $v_2$ defined as shown in Fig.~\ref{q_end}.
 These arcs look as in Fig.~\ref{arc12} depending on the presence of punctures in $S$.

\begin{figure}[!h]
\begin{center}
  \epsfig{file=./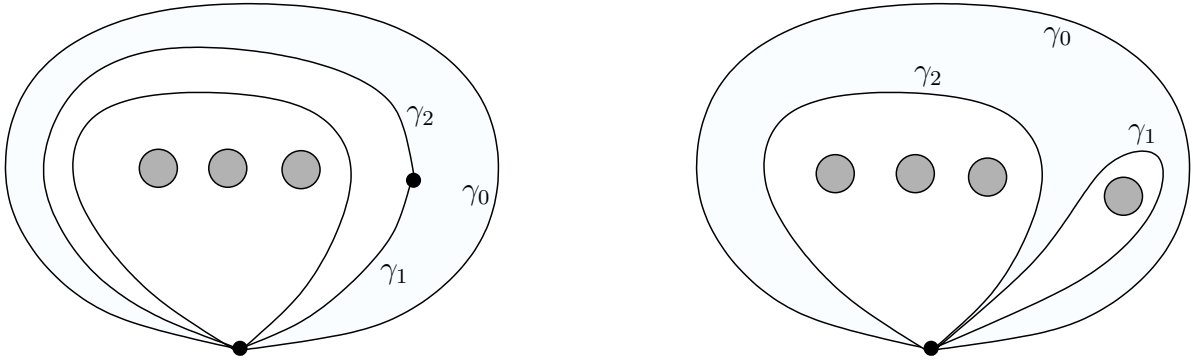,width=0.99\linewidth}
  \put(-61,120){ $\gamma_0$  }  
  \put(-29,83){ $\gamma_1$    }  
  \put(-110,105){ $\gamma_2$    }    
  \put(-281,60){ $\gamma_0$  }  
 \put(-312,30){ $\gamma_1$  }  
  \put(-302,90){ $\gamma_2$   }    
\caption{Arcs $\gamma_1$ and $\gamma_2$ in case of no punctures in $S$ (right) and otherwise (left). The grey circles indicate features (distinct from punctures on the right). If the outer boundary component contains a unique marked point, the arc $\gamma_0$ coincides with the outer boundary.  Arcs $\gamma_0,\gamma_1$ and $\gamma_2$ form one triangle of the triangulation. }
\label{arc12}
\end{center}
\end{figure}

 \medskip
 \noindent
 {\bf Conditions (a1)--(a3) are necessary.}
 We need to show that if $L$ is peripheral then  (a1)--(a3) hold.

 We start by  proving (a3).
 Let $L$ be a peripheral lamination. 
 Notice that any peripheral curve homotopic to an inner boundary does not cross $\gamma_1$ and $\gamma_2$. Consider peripheral curves homotopic to the outer boundary. 
 
 First, consider the closed curve $C$ homotopic to the outer boundary, see Fig.~\ref{closed}. It is easy to see that for this curve $b_1=-b_2=-1$. 
 Furthermore, any non-closed peripheral curve has $b_1=b_2=0$ (it either does not cross  $\gamma_1$ and $\gamma_2$ at all, or consequently crosses $p$-locally all curves incident to $p$).
 So, no peripheral curve except for $C$ can affect $b_1$ and $b_2$, and hence  condition (a3) is necessary.

Condition (a2) is necessary in view of Corollary~\ref{cor-ann}. Condition (a1) is necessary since no peripheral curve crosses non-trivially any arc of the triangulation corresponding to any vertex of $Q_I$.
Hence,  conditions (a1)--(a3) are necessary.

\begin{figure}[!h]
\begin{center}
  \epsfig{file=./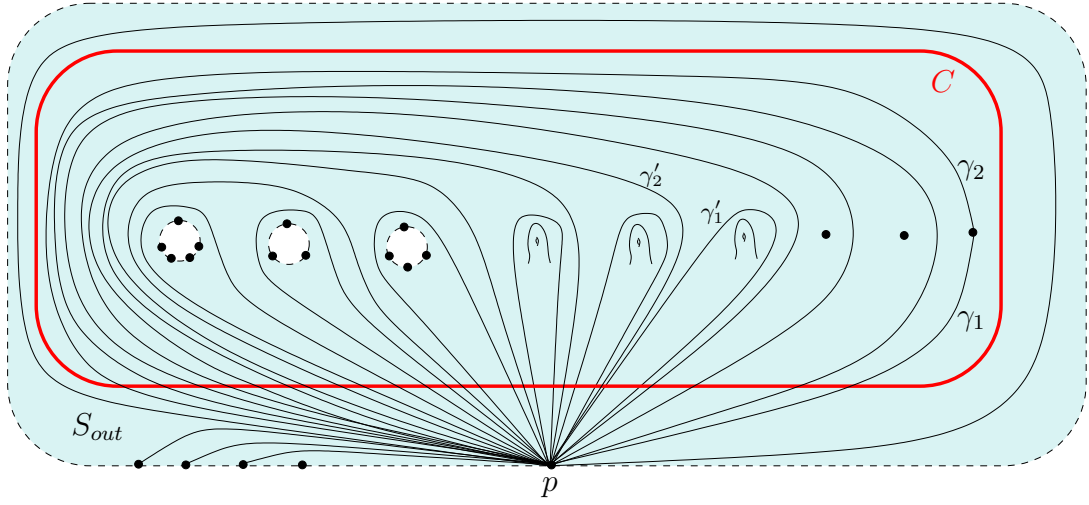,width=0.9\linewidth}
  \put(-385,13){$S_{out}$}
  \put(-60,143){\color{red} $C$}
    \put(-207,-8){$p$}
\put(-50,112){$\gamma_2$}
  \put(-50,55){$\gamma_1$}
\put(-170,109){\scriptsize $\gamma_2'$}
  \put(-147,95){\scriptsize $\gamma_1'$}
\caption{The closed curve $C$ isotopic to the outer boundary component. The curves $\gamma_i'$ play the role of $\gamma_i$ in case of absence of punctures.}
\label{closed}
\end{center}
\end{figure}

 \medskip
 \noindent
 {\bf Conditions (a1)--(a3) are sufficient.}
Next, we will prove that every lamination which is not peripheral contradicts some of conditions (a1)--(a3).

Suppose that $L$ is a non-peripheral lamination and suppose that all conditions  (a1)--(a3) are satisfied by $L$.
Let us make several observations:

\begin{itemize}
    \setlength\itemsep{15pt}
\item[(O1) $\bullet$]
  {\it No curve from $L$  has any end on any inner boundary component except for the peripheral curves. No curve from $L$ can spiral into a puncture.
In particular, every curve $l\in L$ consists of finitely many subsegments.}

\smallskip    
    The first statement follows from condition (a2), the argument goes along the same lines as the part of the proof of  Lemma~\ref{l_ann} concerning bridging arcs. The second statement follows from the fact that a spiralling curve produces a non-zero shear coordinate on one of the two arcs incident to the puncture (see~\cite[Fig. 36]{FT} and~\cite[Fig. 6.3]{FeSTu3}) and from Proposition~\ref{none}.

\item[(O2)  $\bullet$] 
 {\it Let $x_i$ be an arc of $T$ with both ends at $p$ and enclosing exactly one inner boundary component, see Fig.~\ref{parts}. Then for any curve  $c\in L$ intersecting $x_i$
    the restriction of $c$ onto the annulus cut out by $x_i$ is a $p$-local segment of $c$}.

\smallskip 
  The statement follows immediately from (O1).

\item[(O3)  $\bullet$] 
 {\it Let $l\in L$, and let $\gamma\in T$ be incident to $p$ and encircled by $\gamma_1$, $\gamma_2$, or $\gamma_1\cup\gamma_2$. Then every intersection of $l$ with $\gamma$ is $p$-local}.

 \smallskip 
For arcs inside $x_i$ this follows from (O2); all other arcs incident to $p$  and encircled by $\gamma_1$,  $\gamma_2$, or  $\gamma_1\cup\gamma_2$ correspond to vertices of $Q$  belonging to $Q_I$, therefore the statement follows from Proposition~\ref{none} together with (O2).

\item[(O4)  $\bullet$]
  {\it Let $l\in L$. Then every intersection of $l$ with $\gamma_i$ belongs to a $p$-local segment of $l$ with two endpoints either on $\gamma_i$ (if $\gamma_i$ is a loop), or on $\gamma_1\cup\gamma_2$ (otherwise).}

  \smallskip 
According to (O3), all subsegments of $l$ inside a monogon bounded by $\gamma_i$ (or the digon bounded by $\gamma_1\cup\gamma_2$) are $p$-local, so they compose a $p$-local segment. Due to (O1), $l$ is either closed or have both ends on the outer boundary component. Therefore, every maximal segment of $l$ contained in  $\gamma_i$ (or in $\gamma_1\cup\gamma_2$) has both ends on $\gamma_i$ (or on $\gamma_1\cup\gamma_2$, respectively).

\item[(O5) $\bullet$]
  {\it Suppose that $\gamma_1$ and $\gamma_2$ are arcs with one endpoint in a puncture, as in Fig.~\ref{arc12}, left.
    Let $l\in L$ and suppose that $b_1(l)\ne 0$ or $b_2(l)\ne 0$. Then $l$ coincides with the closed curve $C$ (see Fig.~\ref{closed}).
  }

  \smallskip
  Suppose that $b_1(l)\ne 0$ (the case of $b_2(l)\ne 0$ can be treated similarly). Let $t_0$ be an intersection point of $l$ and $\gamma_1$ producing a non-trivial crossing. By (O4) there is a $p$-local segment $t_0t_1$ in $l$ with $t_1\in \gamma_1\cup \gamma_2$, more precisely, $t_1\in \gamma_2$, see Fig.~\ref{spiral}. Since the crossing at $t_0$ is non-trivial, the subsegment $t_{-1}t_0$ of $l$  not lying on $t_0t_1$  should have its end $t_{-1}$ on $\gamma_2$. If $t_{-1}=t_1$ then $l$ is the closed curve $C$.

  Suppose that $t_{1}$ lies on $\gamma_2$ further from $p$ than $t_{-1}$.  Extending the segment $t_{-1}t_1\in l$ past $t_1$ we will obtain a point $t_2$ on $\gamma_1$ lying further away from $p$ than $t_0$. By (O4), there  is a $p-$local segment $t_2t_3$ of $l$ with $t_3\in\gamma_2$. Notice that we will get  $t_3$ further away from $p$ then $t_1$. Continuing in the same way we will get infinitely many subsegments of $l$ in contradiction to (O1).  The case when $t_1$ lies on $\gamma_2$ closer to $p$ than $t_{-1}$ can be treated similarly (by extending the curve past $t_{-1}$).

\begin{figure}[!h]
\begin{center}
  \epsfig{file=./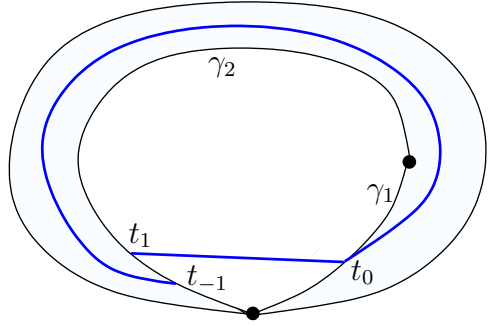,width=0.4\linewidth}
  \put(-56,16){ $t_0$  }  
  \put(-140,30){ $t_1$  }  
  \put(-118,14){ $t_{-1}$  }  
  \put(-50,47){ $\gamma_1$    }  
  \put(-110,95){ $\gamma_2$    }    
\caption{To the proof of (O5).}
\label{spiral}
\end{center}
\end{figure}

\item[(O6)  $\bullet$]
  {\it  Suppose that $\gamma_1$ and $\gamma_2$ are loops with both endpoints in $p$, as in Fig.~\ref{arc12}, right.
    Denote by $l^\pm_1$ and $l^\pm_2$  the positive and negative elementary laminations  for $\gamma_1$ and $\gamma_2$ respectively, and by $D^r_C(l^\pm_i)$ twists along $C$ applied to the curves above, $i=1,2$, $r\in\Z$. Denote by $M$ the set of curves consisting of the closed curve $C$ and the curves whose restriction onto $S\setminus S_{out}$  coincides with the restrictions of curves $l_1^+$, $l_2^-$, $D^r_C(l^+_i)$, $D^{-r}_C(l^-_i)$, where $r>0$, $i=1,2$. Then if $l\in L$ and $l\notin M$, then $b_1(l)=b_2(l)=0$. } 

\smallskip 
Let $l\in L$ be a curve, and suppose that at least one of $b_1(l)$ and $b_2(l)$ is not zero. This implies that $l$ intersects at least one of $\gamma_1$ and $\gamma_2$. Notice that  $S\setminus S_{out}$ consists of one triangle bounded by $\gamma_0,\gamma_1,\gamma_2$ and two surfaces encircled by $\gamma_1$ and $\gamma_2$ respectively (here $\gamma_0$ may coincide with the outer boundary). In view of (O4), the segments of $l$ contained inside the arcs $\gamma_1$ and $\gamma_2$ are $p$-local, and thus uniquely determined, see Fig.~\ref{almost_cl}, left. We now want to list all possible subsegments of $l$ inside the remaining triangle with two ends on $\gamma_1$ and $\gamma_2$.

\begin{figure}[!h]
\begin{center}
  \epsfig{file=./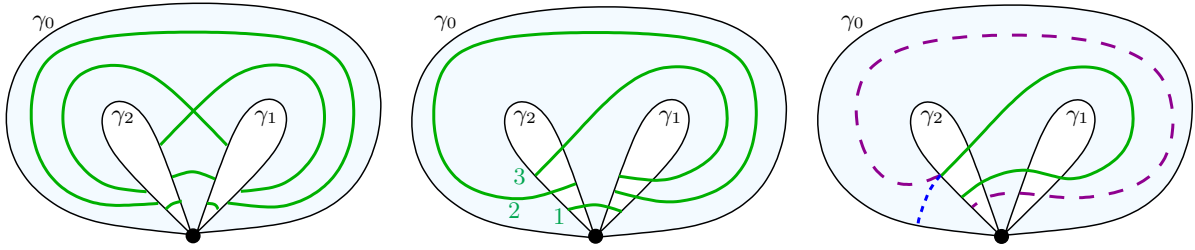,width=0.99\linewidth}
 \put(-243,8){\scriptsize \color{Green} $1$}
 \put(-260,10){\scriptsize \color{Green} $2$}
 \put(-258,23){\scriptsize \color{Green} $3$}
 \put(-134,83){\scriptsize  $\gamma_0$}
 \put(-288,83){\scriptsize  $\gamma_0$}
 \put(-440,83){\scriptsize  $\gamma_0$} 
 \put(-104,47){\scriptsize  $\gamma_2$}
 \put(-49,47){\scriptsize  $\gamma_1$}
 \put(-258,47){\scriptsize  $\gamma_2$}
 \put(-203,47){\scriptsize  $\gamma_1$}
 \put(-410,47){\scriptsize  $\gamma_2$}
 \put(-356,47){\scriptsize  $\gamma_1$}
\caption{To the proof of (O5): behaviour of curves on $S\setminus S_{out}$ }
\label{almost_cl}
\end{center}
\end{figure}

We say that a subsegment in the triangle joining $\gamma_1$ and $\gamma_2$ {\em approaches $\gamma_i$ from the right (left)} if it is followed by a $p$-local segment whose other end can be reached by going around $p$ counterclockwise (resp., clockwise). Since every subsegment joining $\gamma_1$ and $\gamma_2$ approaches them from one of the two sides, there are precisely four types of subsegments, they all are shown on Fig.~\ref{almost_cl}, left.

Notice that two of the four subsegments intersect, which means that at most one of them can be a part of $l$; we assume first that the one approaching both curves from the left does not appear. Gluing the $p$-local segments located inside $\gamma_1$ and $\gamma_2$ to all three remaining subsegments, we conclude that $l$ can be assembled from the copies of the three curves shown in Fig.~\ref{almost_cl}, middle; we will refer to these as to segments of types $1, 2, 3$ respectively.  These segments are attached to each other in $l$ along $p$-local segments with both ends on the same curve $\gamma_i$.

Suppose that $l$ does not contain any segment of type 3. It is easy to see that in this case none of the segments can be extended to an intersection with $\gamma_0$ (except for a $p$-local extension of a type 1 segment which has $b_1=b_2=0$ and thus is excluded), which means that $l$ is a closed curve. The only non-self-intersecting closed curve that can be composed out of segments of types 1 and 2 is the closed curve $C$.

Suppose now that $l$ contains a segment of type 3. It can only be extended past its intersection with $\gamma_1$ by a type 1 segment, see Fig.~\ref{almost_cl}, right. We obtain a segment with both ends on $\gamma_2$. As it contains two $p$-local segments located inside $\gamma_2$, we can determine which of the ends is closer to $p$ along $\gamma_2$, call it the {\it lower} end and the other one the {\it upper} end.   Now,  the upper end can be either joined to $\gamma_0$ or extended by a type 2 segment. Notice that if it is joined to $\gamma_0$, then the lower end should also be joined to $\gamma_0$, and thus we obtain a restriction of $l_1^+$ onto $S\setminus S_{out}$. If the upper end is extended using a segment of type 2, then we obtain a new curve with both ends on $\gamma_1$ and well defined upper and lower ends, so we can repeat the reasoning for the new upper end. We will need to connect the upper end to the boundary after finitely many steps (as $l$ consists of finitely many of these segments). This will result in a restriction of $D_C^{r}(l^+_i)$, $i=1,2$, $r>0$.

Finally, if while considering the four subsegments in the triangle we avoid the one approaching both curves from the right, then using precisely the same arguments we would obtain restrictions of curves $l_2^-$, $D_C^{r}(l^-_i)$ with  $i=1,2$, $r<0$.

\item[(O7)  $\bullet$]
  {\it Let $l\in L$ and $b_1(l)\ne 0$ or $b_2(l)\ne 0$. Then $l$ coincides with the closed curve $C$.}

  \smallskip
If $\gamma_1$ and $\gamma_2$ are arcs incident to a puncture as in Fig.~\ref{arc12}, left, then the statement follows immediately from (O5), so we may assume that $\gamma_1$ and $\gamma_2$ are loops as  in Fig.~\ref{arc12}, right.
  
Due to (O6), we need to consider the curves belonging to the set $M$ only. Notice that any twist $D^{k}_C(l^+_i)$, $i=1,2$, $k\ge 0$ is not compatible with any twist $D^{m}_C(l^-_j)$, $j=1,2$, $m<0$, since they contain intersecting subsegments (see Fig.~\ref{almost_cl}, left). Now, the negative shear coordinates $(b_1,b_2)$ for $D^{k_1}_C(l^+_1)$ and  $D^{k_2}_C(l^+_2)$ are equal to $(-(2k_1+1),2k_1)$ and $(-2k_2, 2k_2-1)$ respectively. According to (a3) and Prop.~\ref{none}, $k_1\ge 0$ and $k_2\ge 1$. It is easy to see that for any such curve the modulus of $b_1$ is strictly greater than the modulus of $b_2$. For $D^{m}_C(l^-_j)$ the considerations are similar. For $C$, $|b_1(C)|=|b_2(C)|$. Therefore, if $L$ contains any curve from $M$ except for $C$, then $|b_1(L)|\ne|b_2(L)|$ in contradiction to (a3).

\end{itemize}

 Recall that $L$ is a non-peripheral lamination. Let $C_{np}\in L$ be a non-peripheral curve and consider a lamination consisting  of the single curve $C_{np}$ (we will use the same notation for this lamination).
 We now show that there exists a non-peripheral curve which coincides with $C_{np}$ inside $S\setminus S_{out}$ and has all shear coordinates equal to 0 in contradiction to~\cite{FT}.
 
 Recall from Notation~\ref{not} that $Q_{in}$, $Q_{out}$ and $Q_I$ are subquivers of $Q$ corresponding to inner boundary, outer boundary and the set defined in Fig.~\ref{q_not}. Denote by $I_{in}$ and $I_{out}$ the corresponding index sets. Denote also $Q_{12}=\langle v_1,v_2\rangle$.
 
 Observe:

\begin{itemize} 
\item[$\bullet$] $b_i(C_{np})=0$ for $i\in I$ (by Proposition~\ref{none} and (a1)); 

 \item[$\bullet$] $b_i(C_{np})=0$ for $i=1,2$.\\
  This follows immediately from Observation (O7) since $C_{np}$ is non-peripheral and hence does not coincide with $C$.

\item[$\bullet$]  $b_i(C_{np})=0$ for $i\in I_{in}$. 
    \\
This follows from applying observation (O2) to each inner boundary component. 

\end{itemize}

Therefore, we obtain that
\begin{itemize} 
\item[$\bullet$] $b_i(C_{np})=0$ for $i\notin I_{out}$. 

\end{itemize}

We are left to consider $b_i(C_{np})$ for $i\in I_{out}$ (notice that this only makes sense when $S_{out}$ is non-empty).
If ends of the curve $C_{np}$ do not lie on the outer boundary component (i.e., $C_{np}$ is closed), then $C_{np}$ does not cross any arc corresponding to vertices of $I_{out}$ and we have  $b_i(C_{np})=0$ for $i\in I_{out}$. In this case all shear coordinates of $C_{np}$ vanish, which contradicts~\cite{FT}. Thus, we can assume that $C_{np}$ has both ends on the outer boundary. 

We  will now modify the curve  $C_{np}$ by amending its intersection with the subsurface $S_{out}$ only. 
The new curve $C_{np}'$ is defined by shifting  each endpoint of $C_{np}$ to one of the boundary intervals  containing the marked point $p$ according to the following rules: the ends of segments crossing consequently  $\gamma_1$ and $\gamma_0$ will be shifted clockwise along the outer boundary, and the ends of segments crossing consequently  $\gamma_2$ and $\gamma_0$ will be shifted counterclockwise, see   Fig.~\ref{shift}. As a result, all crossings of $C_{np}'$  with arcs in $S_{out}$ are  $p$-local (including the crossings with $\gamma_0$), and hence we get $b_i(C_{np}')=0$ for  $i\in I_{out}$.
As we also have  $b_i(C_{np}')=0$ for  $i\notin I_{out}$, we conclude that all shear coordinates of $C_{np}'$ vanish, which leads to a contradiction.

This  shows that non-peripheral lamination $L$ satisfying (a1)--(a3) does not exist, which proves that the conditions (a1)--(a3) are sufficient.

\begin{figure}[!h]
\begin{center}
  \epsfig{file=./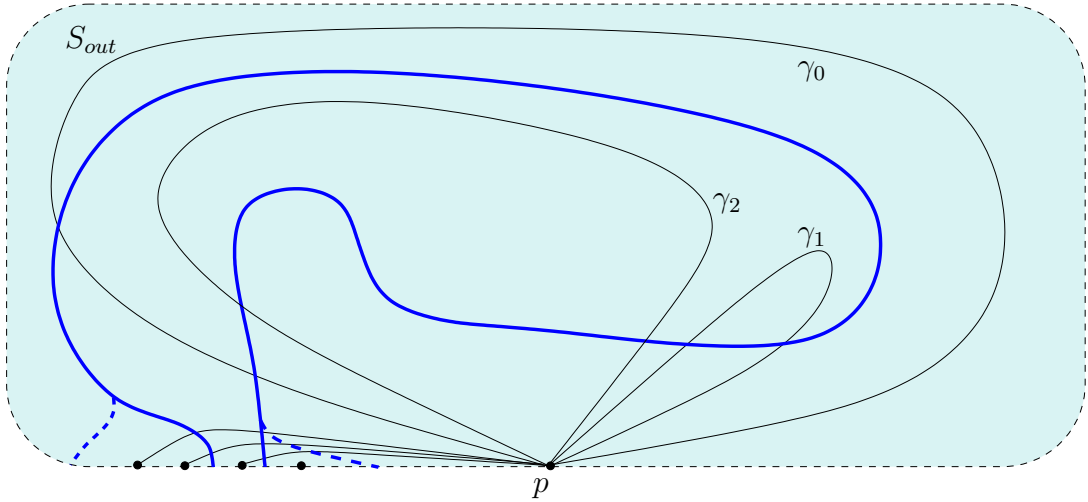,width=0.9\linewidth}
    \put(-387,160){$S_{out}$}
    \put(-210,-8){$p$}
    \put(-110,150){$\gamma_0$}
  \put(-110,87){$\gamma_1$}
  \put(-142,100){$\gamma_2$}
\caption{Shifting endpoints of the curve $C_{np}$ on the outer boundary (to the segment on the left of $p$ if the curve  come to $\gamma_0$  from $\gamma_2$, and to the segment on the right of $p$ is it comes from $\gamma_1$}
\label{shift}
\end{center}
\end{figure}
\end{proof}

\section{Skew-symmetrizable mutation classes}
\label{symmetrizable}
\noindent
In this section we consider the skew-symmetrizable case.

Let $B$ be a skew-symmetrizable $n\times n$ matrix, i.e. there is an integer diagonal $n\times n$ matrix $D=(d_i)$ with positive entries such that $BD$ is skew-symmetric. We suppose that $B$ is mutation-finite and want to determine whether $B$ can be complemented by one more row $(b_{n+1,1}, \dots, b_{n+1,n})$ so that the obtained $(n+1)\times n$ matrix $\widetilde B$ will be still mutation-finite. As before, we call a vector $\bm b=(b_{1}, \dots, b_{n})$ admissible if the matrix $\widetilde B$ composed of $B$ and row $-\bm b$ is mutation-finite.

\subsection{Diagrams and unfoldings}

We recall basics on diagrams of skew-symmetrizable matrices.

\medskip
\noindent
{\bf Diagrams.}
According to~\cite{FZ2}, skew-symmetrizable matrices $(b_{ij})$ can be represented by diagrams with arrows from $v_i$ to $v_j$ of weight $-\sgn{(b_{ij})}\,b_{ij}b_{ji}$, which undergo mutations compatible with matrix mutations. A skew-symmetrizable matrix $(b_{ij})$ can be reconstructed by its diagram and the diagonal skew-symmetrizing matrix $D=(d_i)$. We will use a double arrow   $i\!=\!\!>\!j$ to denote an arrow of weight $4$ when $d_i=d_j$. 

Notice that if $B$ is skew-symmetrizable with the skew-symmetrizer $D=(d_i)$ then the  $(n+1)\times n$ matrix $\widetilde B$ can  always be extended to a skew-symmetrizable $(n+1)\times(n+1)$ matrix by adding $(n+1)$st column satisfying $b_{i,n+1}=-d_ib_{n+1,i}$ and setting $d_{n+1}=1$. This means that the matrix  $\widetilde B$ can also be represented by a diagram (with arrows of weight $\sgn(b_i)d_ib_i^2$ from $v_i$ to the frozen vertex $v_{n+1}$).

One diagram with a frozen vertex may correspond to several skew-symmetrizable extended matrices, however, for any $k=1,\dots,n$ mutations $\mu_k$ of such matrices always lead to the same extended diagram. We will call a diagram with a frozen vertex {\em mutation-finite} if it represents mutation-finite matrices (with respect to mutations in the first $n$ vertices).

\medskip
\noindent
{\bf Mutation-finite diagrams without frozen vertices.}

It was shown in~\cite{FeSTu2,FeSTu3} that mutation-finite diagrams either are skew-symmetric, or arise from triangulated orbifolds, or are of rank 2, or are mutation-equivalent to one of the seven types $F_4, \widetilde G_2, \widetilde F_4, G_2^{(*,+)},  G_2^{(*,*)}, F_4^{(*,+)},  F_4^{(*,*)}$
shown in Fig.~\ref{fg}. 

We will consider the orbifolds, rank 2 diagrams, and each of the seven exceptional mutation-finite types separately, mostly based on the notion of {\em unfolding}.

\begin{figure}[!h]
\begin{center}
  \epsfig{file=./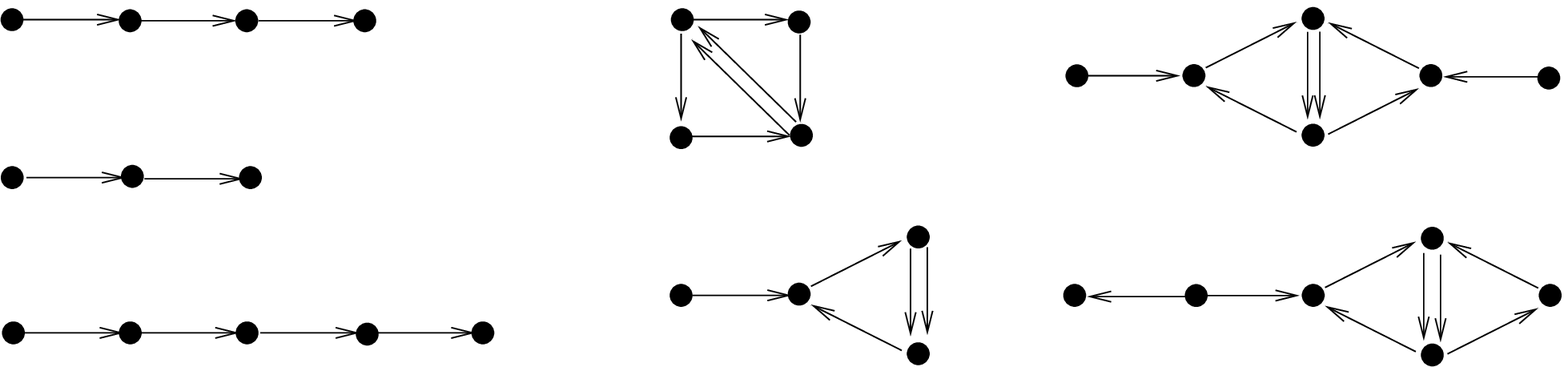,width=0.95\linewidth}
  \put(-450,81){$F_4$}
  \put(-385,100){\small $2$}
  \put(-450,31){$\widetilde G_2$}
  \put(-385,55){\small $3$}
    \put(-450,-10){$\widetilde F_4$}
    \put(-385,12){\small $2$}
  \put(-272,45){$  G_2^{(*,*)}$}    
  \put(-272,-2){$  G_2^{(*,+)}$}    
  \put(-208,77){\small 3}
  \put(-230,100){\small 3}
  \put(-200,2){\small 3}
  \put(-200,32){\small 3}
  \put(-142,55){$  F_4^{(*,*)}$}    
  \put(-142,-2){$  F_4^{(*,+)}$}      
  \put(-92,91){\small 2}
  \put(-92,62){\small 2}
  \put(-56,2){\small 2}
  \put(-56,32){\small 2}
\caption{Diagrams of exceptional skew-symmetrizable mutation-finite types.  }
\label{fg}
\end{center}
\end{figure}

\medskip
\noindent
{\bf Unfoldings.} We briefly recall the definition of an unfolding of a skew-symmetrizable matrix introduced by A.~Zelevinsky. For more details see~\cite{FeSTu2}.

Let $B$ be a skew-symmetrizable matrix with a skew-symmetrizer $D=(d_i)$. 
Suppose that we have chosen disjoint index sets $E_1,\dots, E_k$ with $|E_i| =d_i$. Denote $m=\sum\limits_{i=1}^k d_i$.
Suppose also that we choose a skew-symmetric integer matrix $C$ of size $m\times m$ with rows and columns indexed by the union of all $E_i$, such that

(1) the sum of entries in each column of each $E_i \times E_j$ block of $C$ equals $b_{ij}$;

(2) if $b_{ij} \geq 0$ then the $E_i \times E_j$ block of $C$ has all entries non-negative.

Define a {\it composite mutation} $\h\mu_i = \prod_{\hat\imath \in E_i} \mu_{\hat\imath}$ on $C$. This mutation is well-defined, since all the mutations  $\mu_{\hat\imath}$, $\hat\imath\in E_i$, for given $i$ commute.

We say that $C$ is an {\it unfolding} of $B$ if $C$ satisfies assertions $(1)$ and $(2)$ above, and for any sequence of iterated mutations $\mu_{k_1}\dots\mu_{k_m}(B)$ the matrix $C'=\h\mu_{k_1}\dots\h\mu_{k_m}(C)$ satisfies assertions $(1)$ and $(2)$ with respect to $B'=\mu_{k_1}\dots\mu_{k_m}(B)$.

If $C$ is an unfolding of a skew-symmetrizable integer matrix $B$, it is natural to define an {\it unfolding of a diagram} of $B$ as a quiver of $C$. In general, we say that a quiver $Q$ is an unfolding of a diagram $\S$ if there exist matrices $B$ and $C$ with diagram $\S$ and quiver $Q$ respectively, and $C$ is an unfolding of $B$. Note that a diagram may have many essentially different unfoldings.

We can also define an unfolding $\t C$ of an extended skew-symmetrizable matrix $\t B$ consisting of $B$ and a row $(b_{n+1,1},\dots,b_{n+1,n})={-\bm b}=-(b_1,\dots,b_n)$ in the following way: it will consist of an unfolding $C$ of $B$ and a row vector $-\h{\bm b}$ such that the block $E_{n+1}\times E_j$ consists of $d_j$ equal entries $-b_{n+1,j}$. If we extend both matrices $\t B$ and $\t C$ with one additional column each to make them skew-symmetrizable and skew-symmetric respectively, then they will satisfy assertions $(1)$ and $(2)$ with respect to any sequence of mutations not including index $n+1$. 

This leads to a definition of an unfolding of a diagram with an additional frozen vertex. Such a diagram corresponds to an extended skew-symmetrizable matrix $\t B$, so we take an unfolding of it as defined above, add an additional column to make the obtained matrix skew-symmetric, and then take the corresponding quiver. Again, such a unfolding may not be unique.  

\begin{example}
\label{a14}
Consider the skew-symmetrizable exchange matrix $B$ shown below and its diagram

\begin{center}
    $B=\begin{pmatrix}
      0&1\\
      -4&0
\end{pmatrix}$\qquad\qquad   \epsfig{file=./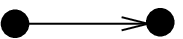,width=0.1\linewidth}
  \put(-26,-6){\small 4}   
\end{center}

We now can write the extended exchange matrix $\t B$ with a coefficient vector $(b_1,b_2)$, add a column to make it skew-symmetrizable, and draw the corresponding diagram with a frozen vertex.

\begin{center}
  $\t B= \begin{pmatrix}
      0&1\\
      -4&0\\
      {\color{blue}-b_1}& {\color{blue}-b_2}
\end{pmatrix}\quad\leadsto\quad
  \begin{pmatrix}
      0&1&{\color{magenta}b_1}\\
      -4&0&{\color{magenta}4b_2}\\
       {\color{blue}-b_1}& {\color{blue}-b_2}&{\color{magenta}0}
\end{pmatrix}\quad\leadsto \qquad\qquad$ 
\raisebox{-14pt}{\epsfig{file=./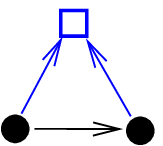,width=0.1\linewidth}
  \put(-26,-6){\small 4}
  \put(-82,16){\small \color{blue} $\sgn(b_1) b_1^2$}
  \put(-10,16){\small \color{blue}  $4 \sgn(b_2) b_2^2$}
}
\end{center}

The results of unfoldings of both the matrix and the diagram are shown below.

\begin{center}  
{\scriptsize
 $\t C= \begin{pmatrix}
      0&1&1&1&1\\
      -1&0&0&0&0\\
      -1&0&0&0&0\\
      -1&0&0&0&0\\
       -1&0&0&0&0\\
      {\color{blue}-b_1}& {\color{blue}-b_2}& {\color{blue}-b_2}& {\color{blue}-b_2}& {\color{blue}-b_2}
\end{pmatrix}\ \leadsto\ 
\begin{pmatrix}
      0&1&1&1&1&{\color{magenta}b_1}\\
      -1&0&0&0&0&{\color{magenta}b_2}\\
      -1&0&0&0&0&{\color{magenta}b_2}\\
      -1&0&0&0&0&{\color{magenta}b_2}\\
       -1&0&0&0&0&{\color{magenta}b_2}\\
      {\color{blue}-b_1}& {\color{blue}-b_2}& {\color{blue}-b_2}& {\color{blue}-b_2}& {\color{blue}-b_2}&{\color{magenta}0}
\end{pmatrix}
\quad\leadsto \quad$ 
\raisebox{-35pt}{
  \epsfig{file=./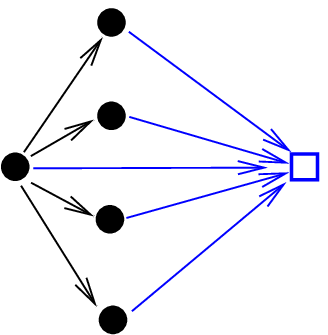,width=0.16\linewidth}
  \put(-45,40){\tiny  \color{blue} $b_1$}
  \put(-35,63){\tiny  \color{blue} $b_2$}
  \put(-35,48){\tiny  \color{blue} $b_2$}
  \put(-35,23){\tiny  \color{blue} $b_2$}
  \put(-35,7){\tiny  \color{blue} $b_2$}
}

  }
\end{center}

  \end{example}

\medskip

In general, not every skew-symmetrizable matrix admits an unfolding. However, it is shown in~\cite{FeSTu2} that every mutation-finite diagram without frozen vertices has a mutation-finite unfolding. This result provides us with a sufficient condition for a given coefficient vector to be admissible: we can always unfold a diagram with a frozen vertex to a quiver with a frozen vertex, and if the obtained quiver together with the unfolded coefficient vector is mutation-finite, then we immediately get the admissibility. Apriori, this condition is not necessary for the admissibility: mutations of a diagram correspond to a very limited collection of mutations of the unfolded quiver, so the unfolded coefficient vector might not be admissible even when the initial vector is.

\subsection{Diagrams from orbifolds and peripheral laminations}

It is shown in~\cite{FeSTu3} that the majority of skew-symmetrizable finite mutation classes originate from triangulated orbifolds. As in the surface case, coefficient vectors are in bijective correspondence with laminations, see~\cite{FeSTu3} for details. Defining peripheral laminations in exactly the same way as for surfaces, and reasoning precisely as in Section~\ref{surfaces}, we obtain a similar result.

\begin{theorem}
\label{t_orb_per}
  Let $\Sigma$ be a diagram from a triangulated orbifold $S$. Then
admissible vectors for $\Sigma$ are in bijection with peripheral laminations on $S$.

\end{theorem}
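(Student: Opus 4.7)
The plan is to adapt the proof of Theorem~\ref{t_per} almost verbatim, replacing the surface $S$ by an orbifold $\mathcal{O}$, triangulations of $S$ by triangulations of $\mathcal{O}$ (in the sense of~\cite{FeSTu3}), and laminations on $S$ by the orbifold laminations of~\cite{FeSTu3} whose shear coordinates realise the bijection between laminations and $\Z^n$. Since admissibility of $\b$ is equivalent to the vector of shear coordinates of the corresponding lamination $L$ taking finitely many values as one ranges over all triangulations of $\mathcal{O}$, it suffices to prove that this finiteness is equivalent to $L$ being peripheral.

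First I would establish the ``peripheral $\Rightarrow$ admissible'' direction. Let $L$ be a peripheral lamination. Since each curve of $L$ is isotopic to a piece of a boundary component, it is fixed (up to isotopy rel.\ $M$) by every Dehn twist along a closed curve in $\mathcal{O}$, and hence by the whole mapping class group $\mathrm{MCG}(\mathcal{O})$ (generated by Dehn twists, including those around orbifold points if needed, as in~\cite{FeSTu3}). As in the surface case, $\mathcal{O}$ admits only finitely many combinatorial types of triangulations, and combinatorially equivalent triangulations are related by elements of $\mathrm{MCG}(\mathcal{O})$. Therefore, every triangulation of $\mathcal{O}$ is reachable from an initial triangulation $T$ by composing finitely many mutation sequences with elements of $\mathrm{MCG}(\mathcal{O})$; since shear coordinates of $L$ are $\mathrm{MCG}$-invariant, the shear coordinate vector takes only finitely many values, so $\b$ is admissible.

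Next I would prove the converse: if $L$ is not peripheral, then there exists a closed curve $C$ on $\mathcal{O}$ that intersects $L$ essentially. Applying the Dehn twist $D_C$ to the triangulation $T$ while leaving $L$ fixed produces the same shear coordinate vector as applying $D_C^{-1}$ to $L$ while leaving $T$ fixed. Because $C$ crosses $L$, the laminations $D_C^{-k}(L)$ are pairwise non-isotopic for $k\in\Z$, and by the bijection between orbifold laminations and $\Z^n$ (from~\cite{FeSTu3}) their shear coordinates with respect to $T$ are pairwise distinct. Hence the shear coordinates of $L$ with respect to the triangulations $D_C^k(T)$ form an infinite set, so $\b$ is not admissible.

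The main obstacle is verifying that the two surface-level facts we rely on genuinely transfer to the orbifold setting: (i) the bijection between orbifold laminations and $\Z^n$ via shear coordinates, and (ii) the generation of a suitable ``orbifold mapping class group'' by Dehn twists acting transitively on combinatorial triangulation types, together with the existence of a closed curve essentially intersecting any non-peripheral curve. Both ingredients are, however, available from~\cite{FeSTu3} and standard orbifold topology, so once they are cited the argument runs word-for-word as in Theorem~\ref{t_per}.
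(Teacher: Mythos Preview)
Your proposal is correct and matches the paper's own approach exactly: the paper does not write out a separate proof but simply states that ``reasoning precisely as in Section~\ref{surfaces}'' (i.e., repeating the proof of Theorem~\ref{t_per} verbatim with orbifolds, orbifold laminations, and the shear-coordinate bijection of~\cite{FeSTu3} in place of their surface counterparts) yields the result. Your identification of the two ingredients that need to be imported from~\cite{FeSTu3} is accurate and is all that the paper implicitly relies on.
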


Similarly to the surface case, given a diagram from an orbifold, results of~\cite{Gu2} allow one to reconstruct a triangulation, and results of~\cite{FT,FeSTu3} allow one to reconstruct a lamination by a coefficient vector.

\subsection{Rank 2 diagrams}

\begin{theorem}
  \label{t_rk2_}
  Let $\Sigma$ be a  rank two diagram with the arrow from $v_1$ to $v_2$ of weight $a>0$. Let $\b=(b_1,b_2)$ be an integer vector. Then
  \begin{itemize}
  \item[(1)] if $a<4$ then $\b$ is admissible for any $b_1,b_2$; 
  \item[(2)] if $a=4$ then $\b$ is admissible if and only if $b_1\le 0\le b_2$ and $d_1b_1^2=d_2b_2^2$;
  \item[(3)] if $a>4$ then there are no admissible vectors. 
    
  \end{itemize}
\end{theorem}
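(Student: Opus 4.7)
Parts (1) and (2) reduce to earlier results in the paper, whereas part (3) requires a direct growth estimate. For part (1), the rank two diagrams of weight $a\in\{1,2,3\}$ are precisely the finite type diagrams $A_2$, $B_2$ and $G_2$, so any $\bm b$ is admissible by the finite-type proposition from~\cite{FZ4} quoted in the introduction.

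For part (2) I would split by $(|b_{12}|,|b_{21}|)$. If this pair equals $(2,2)$ then $\Sigma$ is skew-symmetric with $d_1=d_2$, and the stated condition collapses to $b_1=-b_2\le 0$, recovering Theorem~\ref{t_rk2}(2). If it equals $(1,4)$ (or symmetrically $(4,1)$) then the equality $d_1b_1^2=d_2b_2^2$ together with $b_1\le 0\le b_2$ is precisely the modified annulus property for arrows of weight $(1,4)$ (Theorem~\ref{t_af_double4}), yielding necessity. For sufficiency I would use the unfolding of Example~\ref{a14}: together with the frozen vertex, $\Sigma$ unfolds to an oriented 4-star, a quiver mutation-equivalent to $\widetilde D_4$, carrying extended coefficient vector $(-b_1,-b_2,-b_2,-b_2,-b_2)$. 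Under the stated constraints this extended quiver mutates to the standard $\widetilde D_4$ representative of Fig.~\ref{af_e678} with coefficients satisfying the ordinary annulus property across its double arrow, so Theorem~\ref{aff_gen} yields mutation-finiteness of the unfolded extended quiver and hence of the original extended diagram.

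For part (3), write $(b_{12},b_{21})=(b,-c)$ with positive integers $b,c$ and $bc=a>4$; after possibly relabelling $v_1\leftrightarrow v_2$ we may assume $b\ge c$, forcing $b\ge 3$. Suppose $\bm b\ne 0$ were admissible. Following the proof of Theorem~\ref{t_rk2}, after a bounded number of preparatory mutations we reach $b_1<0\le b_2$ with $|b_1|\ge|b_2|$. A direct application of the mutation formulae gives
\[
\mu_2\circ\mu_1 \colon (b_1,b_2)\longmapsto \bigl((a-1)b_1+cb_2,\ b|b_1|-b_2\bigr),
\]
and $b\ge 3$ together with $|b_2|\le|b_1|$ forces $\bigl|b|b_1|-b_2\bigr|\ge(b-1)|b_1|\ge 2|b_1|$. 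Hence $\mu_2\circ\mu_1$ at least doubles $\max(|b_1|,|b_2|)$; after re-entering the standard configuration (automatic when $c\ge 2$, requiring one additional mutation or relabelling when $c=1$) one iterates to produce extended matrices of unbounded size in the mutation class, contradicting admissibility. The principal obstacle is precisely the asymmetry $b\ne c$: the output of $\mu_2\circ\mu_1$ can satisfy $|b_1''|<|b_2''|$ (exactly when $c=1$), so extra bookkeeping is required to re-enter the standard configuration before iterating; the deeper reason growth is forced for $a>4$ is that the linear recurrence governing the two-step mutation has characteristic equation $\lambda^2-(a-2)\lambda+1=0$, whose dominant root $\tfrac12(a-2+\sqrt{a(a-4)})$ exceeds $1$ precisely when $a>4$, mirroring the finite versus infinite mutation type dichotomy for rank two cluster algebras.
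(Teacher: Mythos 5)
Parts (1) and (3) of your argument essentially coincide with the paper's proof: part (3) in the paper is the same growth argument, except that it applies a \emph{single} mutation $\mu_1$ (or $\mu_2$, chosen according to whether $d_1b_1^2\ge d_2b_2^2$) followed by a relabelling, which sidesteps the re-entry bookkeeping that your two-step map $\mu_2\circ\mu_1$ forces you to discuss; your explicit recurrence and the characteristic-root remark are a fine, if longer, substitute. In part (2) your sufficiency route for the $(1,4)$ case via the unfolding of Example~\ref{a14} is genuinely different from the paper, which instead observes that under the stated constraints the square roots of the weights evolve exactly like the $\widetilde A_1$ quiver with coefficient vector $(-2b,b)$ (reducing to Lemma~\ref{l_ann}), or equivalently that both mutations act on the extended exchange matrix as $-I$. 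Your unfolding route is legitimate in principle (the paper establishes that mutation-finiteness of an unfolded extended quiver implies admissibility), but it leaves an unverified computation: the $4$-star has no double arrow, so you must actually mutate it to a double-arrow representative of $\widetilde D_4$ and check that the transported coefficient vector $(-2b_2,b_2,b_2,b_2,b_2)$ still satisfies the annulus property there; you assert this without doing it.

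The genuine gap is in the necessity direction of part (2) for the $(1,4)$ case. You derive it from Theorem~\ref{t_af_double4}, but in the paper the necessity claim of Theorem~\ref{t_af_double4} is itself deduced from Theorem~\ref{t_rk2_}(2) --- the statement you are proving --- so your citation is circular. Theorem~\ref{t_af_double4} gives you the \emph{name} of the condition, not a proof that it is necessary for a rank-two $(1,4)$ diagram. You need an independent argument here, e.g.\ a growth computation in the spirit of your part (3): track the extended skew-symmetrizable matrix under alternating mutations and show that unless $b_1\le 0\le b_2$ and $b_1^2=4b_2^2$ the entries of the coefficient row are unbounded (equivalently, verify directly that the two mutations act as $-I$ on the extended matrix precisely under the stated constraints, and produce strictly growing orbits otherwise).
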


\begin{proof}
  Part (1) concerns finite types, so it follows from~\cite{FZ2}.
  
  For part (2) there are two cases: either $d_1=d_2$, or we may assume that $d_1=1, d_2=4$. The former case is skew-symmetric and thus follows from Section~\ref{aff}. Let us now consider the latter case, the corresponding diagram with coefficient vector $\bm b=(b_1,b_2)$ is shown in Example~\ref{a14}.
  To prove the sufficiency, notice that in the assumptions (2) of the theorem the square roots of the weights of the diagram change under mutations in the same way as the weights of arrows of the quiver $v_1\!=>\!v_2$ with coefficient vector $(-2b,b)$ with $b=b_2\ge 0$, so the statement follows from Lemma~\ref{l_ann} (alternatively, one can compute directly that both mutations act on the extended exchange matrix by multiplication by the negative identity matrix).

The proof of part (3) is similar to the skew-symmetric case. After at most two mutations (and swapping the labels of $v_1$ and $v_2$ if needed) we may assume that the diagram is $v_1\stackrel{a}\rightarrow v_2$, and $b_1\le 0\le b_2$. We may also assume that $d_2b_2^2\le d_1b_1^2$ (otherwise replace $\mu_1$ with $\mu_2$ in the consideration below). Then after mutation $\mu_1$ and swapping the labels of $v_1$ and $v_2$ we will obtain a diagram with coefficient vector $(b_1',b_2')$ satisfying the same conditions and $|b_i'|>|b_i|$, $i=1,2$.  
Applying iterative mutations we can increase the components of the coefficient vector indefinitely.
\end{proof}

\subsection{Affine mutation classes}

Every exceptional mutation class of diagrams of affine type contains a representative shown in Fig.~\ref{af_non}.  Every mutation class of diagrams of affine type originating from an orbifold either contains a representative with a double arrow (we show one for every mutation class  in Fig.~\ref{af_non}), or contains a representative with a subdiagram considered in Example~\ref{a14} (see~\cite{FeSTu3}). We treat these two cases separately, see Theorems~\ref{t_af_double} and~\ref{t_af_double4}.
\begin{theorem}
   \label{t_af_double}
     Let $\Sigma$ be a diagram of type $\t G_2$, $\widetilde F_4$, $\widetilde B_n$ or $\widetilde C_n$ shown in Fig.~\ref{af_non}. A coefficient vector $\b$ is admissible if and only if it satisfies the annulus property.
\end{theorem}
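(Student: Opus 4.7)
The plan is to handle necessity by a direct appeal to Corollary~\ref{cor-ann} and sufficiency via unfoldings. The necessity is immediate: each of the listed representatives in Fig.~\ref{af_non} contains a double arrow by assumption, so the annulus property is forced.

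For sufficiency, the idea is to lift the problem into the skew-symmetric world and invoke Theorem~\ref{aff_gen}. Each of $\widetilde G_2$, $\widetilde F_4$, $\widetilde B_n$, $\widetilde C_n$ admits a mutation-finite unfolding to a skew-symmetric quiver of affine type, as constructed in~\cite{FeSTu2}: for instance $\widetilde G_2$ unfolds to $\widetilde D_4$, $\widetilde F_4$ unfolds to $\widetilde E_6$, and $\widetilde B_n$, $\widetilde C_n$ unfold to quivers of $\widetilde D$-type. Given a coefficient vector $\bm b=(b_1,\dots,b_n)$ satisfying the annulus property, I extend the diagram to a skew-symmetrizable matrix $\widetilde B$ and then construct its unfolding $\widetilde C$ following the recipe recalled in Section~\ref{symmetrizable}: the $E_{n+1}\times E_j$ block is filled with $d_j$ copies of the entry $b_{n+1,j}=-b_j$. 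The resulting extended skew-symmetric matrix corresponds to a quiver $\widehat Q$ with frozen vertex and coefficient vector $\widehat{\bm b}$ that repeats the values $b_j$ across each fiber.

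The central point is that the distinguished double arrow $v_i\!=\!>\!v_j$ in the diagram (where $d_i=d_j$) unfolds to a configuration of the skew-symmetric quiver containing a double arrow whose endpoints lie in $E_i$ and $E_j$ and receive the values $b_i$ and $b_j$ from $\widehat{\bm b}$. Hence $b_i=-b_j\le 0$ immediately yields the annulus property for this double arrow in $\widehat Q$, and Theorem~\ref{aff_gen} applies: the extended skew-symmetric quiver $\widehat Q$ is mutation-finite. Since any iterated mutation sequence $\mu_{k_1}\cdots\mu_{k_r}$ applied to $\widetilde B$ lifts to the composite sequence $\widehat\mu_{k_1}\cdots\widehat\mu_{k_r}$ on $\widetilde C$, and this lifting preserves the unfolding structure by definition, the mutation class of $\widetilde B$ is realised inside the (folded image of the) mutation class of $\widetilde C$, hence is finite. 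This shows admissibility of $\bm b$.

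The step I expect to require the most care is the explicit verification, for each of the four families, that the chosen unfolding does contain a double arrow in the required spot and that the annulus property there corresponds exactly to the condition $b_i=-b_j\le 0$ on the original vector; this is routine for $\widetilde G_2$ and $\widetilde F_4$, but for $\widetilde B_n$ and $\widetilde C_n$ a uniform description of the unfolding across all ranks is needed. A contingency plan, should the unfolding in some case fail to contain a double arrow in the convenient position, is to mutate $\widehat Q$ further (using mutations pulled back from the diagram, so that the annulus property on $\bm b$ is preserved) to reach a representative in the $\widetilde D$ or $\widetilde E$ mutation class of the form treated in Theorem~\ref{t_af_e}, and then apply that result directly.
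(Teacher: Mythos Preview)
Your approach is essentially the same as the paper's: necessity from the annulus property, sufficiency by unfolding to an affine skew-symmetric quiver where the unfolded coefficient vector still satisfies the annulus property, then invoking the affine result to conclude mutation-finiteness downstairs. One small correction: the paper unfolds $\widetilde C_n$ to $\widetilde A_{n,n}$ rather than to a $\widetilde D$-type quiver (and cites Theorem~\ref{t_af_e} together with Lemma~\ref{l_ann} for the specific representatives rather than the more general Theorem~\ref{aff_gen}), but since you appeal to Theorem~\ref{aff_gen}, which covers any affine quiver with a double arrow, your argument goes through regardless of which affine unfolding is used.
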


\begin{proof}
The annulus property is obviously a necessary condition for admissibility of a coefficient vector for given diagrams.
To see that it is also sufficient, notice that these diagrams can be unfolded to quivers of type $\widetilde D_4$, $\widetilde E_6$, $\t D_n$ and $\t A_{n,n}$ shown in Fig.~\ref{af_e678}. The unfolded coefficient vectors still satisfy the annulus property, so every unfolded quiver with frozen vertex is mutation-finite by Theorem~\ref{t_af_e}. This implies that the initial diagrams are mutation-finite as well, so the initial coefficient vectors are admissible.
\end{proof}

\begin{figure}[!h]
  \begin{center}
{\color{white}{.}}\phantom{aaa}    \epsfig{file=./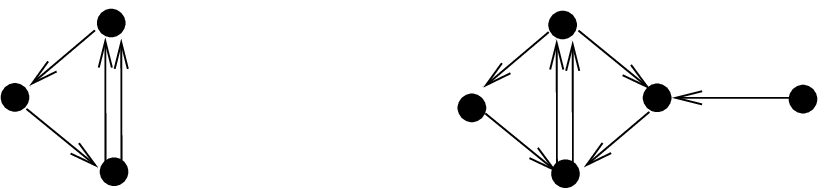,width=0.4\linewidth}
\put(-210,35){$\widetilde G_{2}$}   
\put(-173,30){\scriptsize $3$}   
\put(-173,5){\scriptsize $3$}   
\put(-105,35){$\widetilde F_4$}   
\put(-45,30){\scriptsize $2$}   
\put(-45,5){\scriptsize $2$}

\vspace{1cm}

  \epsfig{file=./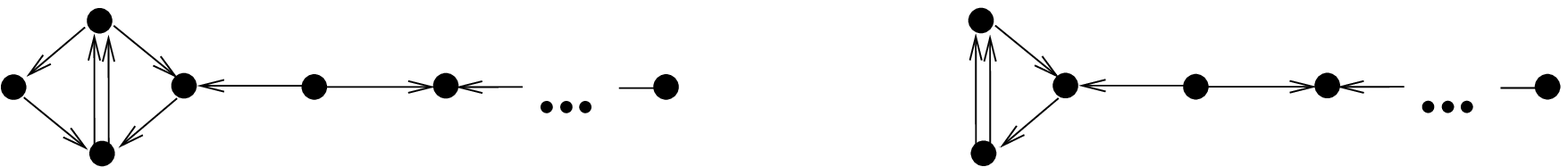,width=0.8\linewidth}
\put(-385,40){$\widetilde B_{n}$}   
\put(-357,29){\scriptsize $2$}   
\put(-357,1){\scriptsize $2$}   
\put(-165,40){$\widetilde C_n$}   
\put(-125,29){\scriptsize $2$}   
\put(-125,1){\scriptsize $2$}   
\put(-120,0){  $ \underbrace{{\phantom{aaaaaaaaaaaaaaaaaaa}}}_{\text{$n-1$}}   $    }  
\put(-327,0){  $ \underbrace{{\phantom{aaaaaaaaaaaaaaaaaaa}}}_{\text{$n-2$}}   $    }  
  \caption{Special representatives from non-skew-symmetric mutation classes of affine types. }
\label{af_non}
\end{center}
\end{figure}

\begin{remark}
  \label{aff_gen1}
By using unfoldings, we can  extend the result of Theorem~\ref{aff_gen} to a general skew-symmetrizable case, i.e. for any diagram   $\Sigma$ of affine type containing a double arrow,  a vector $\b$ is admissible if and only if $\b$ satisfies the annulus property.

\end{remark} 

In the case of a  diagram containing a subdiagram from Example~\ref{a14} we cannot use unfolding: the unfolded diagram is simply-laced, so the annulus property does not lead to any restrictions.

\begin{theorem}
   \label{t_af_double4}
     Let $\Sigma$ be a diagram of affine type containing a subdiagram of type $v_1\stackrel{4}\longrightarrow v_2$ with $d_1=1,d_2=4$. A coefficient vector $\b$ is admissible if and only if $b_1=-2b_2\le 0$.
\end{theorem}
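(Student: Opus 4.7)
\emph{Necessity.} I would consider the rank $2$ subdiagram spanned by $v_1$ and $v_2$ together with the frozen vertex. The mutation formula shows that under $\mu_{v_1}$ and $\mu_{v_2}$ the pair $(b_1, b_2)$ evolves using only the data $b_1, b_2, b_{12}, b_{21}$, and the weights $|b_{12}|, |b_{21}|$ are preserved. Hence, if $\b$ is admissible for $\Sigma$, then $(b_1, b_2)$ is admissible for the rank $2$ diagram $v_1\stackrel{4}\longrightarrow v_2$ with $d_1 = 1, d_2 = 4$. Theorem~\ref{t_rk2_}(2) then forces $b_1 \le 0 \le b_2$ together with $d_1 b_1^2 = d_2 b_2^2$, equivalently $b_1 = -2b_2 \le 0$.

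\emph{Sufficiency.} Assume $b_1 = -2b_2 \le 0$. Mirroring the strategy used in Theorem~\ref{t_af_double}, I would invoke the unfolding machinery of~\cite{FeSTu2}: the affine mutation-finite diagram $\Sigma$ admits a mutation-finite skew-symmetric unfolding $Q$. Extending to the ice-quiver level, we obtain an unfolded extended matrix $\t C$ whose frozen row carries each coefficient $b_j$ replicated $d_j$ times; under $b_1 = -2 b_2 \le 0$, this produces the entry $-b_1 = 2b_2 \ge 0$ at the single copy of $v_1$ and four copies of $-b_2 \le 0$ at the copies of $v_2$. Since each diagram mutation of $(\Sigma, \b)$ lifts to a composite quiver mutation of $(Q, \h{\b})$, mutation-finiteness of the unfolded ice quiver forces admissibility of $\b$ for $\Sigma$. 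To verify the former I would apply Theorem~\ref{t_af_e} (or its extension Theorem~\ref{aff_gen}) to a representative of the mutation class of $Q$ exhibiting a double arrow.

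\emph{Main obstacle.} The delicate step is this last verification: the condition $d_1 b_1^2 = d_2 b_2^2$ is the skew-symmetrizable analog of the balance $b_i + b_j = 0$ that underlies the annulus property, and one must track that this balance lifts correctly through the composite mutations of $Q$ needed to expose a double arrow in a mutation-equivalent representative. In the rank $2$ prototype of Example~\ref{a14}, $Q$ is of type $\t D_4$ and the lifted vector $(-2b_2, b_2, b_2, b_2, b_2)$ becomes admissible by Theorem~\ref{t_af_e} after a suitable mutation of the central vertex produces a double arrow between a pair of leaves. For higher-rank affine diagrams containing the weight-$4$ subdiagram, the additional vertices attach away from the $v_1$--$v_2$ edge in the orbifold picture, so the unfolded additions do not interfere with the balance at the $v_1, v_2$ block and the argument propagates.
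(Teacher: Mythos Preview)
Your necessity argument is correct and matches the paper's. The sufficiency argument, however, takes a route the paper explicitly avoids, and the ``main obstacle'' you flag is a genuine gap rather than a detail.

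The paper remarks, immediately before this theorem, that for subdiagrams of type $v_1\stackrel{4}\longrightarrow v_2$ with $d_1=1,\,d_2=4$ one \emph{cannot} use unfolding in the way Theorem~\ref{t_af_double} does: the unfolded quiver at the $v_1,v_2$ block is simply-laced (one vertex joined by single arrows to four copies of $v_2$), so no double arrow appears there and the annulus property imposes no restriction at that spot. To invoke Theorem~\ref{aff_gen} you would have to mutate the unfolded quiver to produce a double arrow; but any such mutation is \emph{not} a composite mutation compatible with the unfolding, so the $E_2$-symmetry of $\h{\bm b}$ is destroyed and you must track the coefficient vector through an asymmetric sequence before checking the annulus property. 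You do not carry this out, and there is no a priori reason the balance $b_1=-2b_2$ survives such a sequence in the form $b_i=-b_j$ at the emerging double arrow. In short, the logical implication ``unfolded ice quiver is mutation-finite $\Rightarrow$ $\bm b$ admissible'' is fine, but you have not established the hypothesis.

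The paper's proof bypasses unfolding entirely. It observes that every affine diagram containing such a subdiagram arises from an unpunctured disk with two orbifold points and some boundary marked points, so any triangulation decomposes into the monogon of Fig.~\ref{orb}(d) together with a polygon $S_{out}$. Then it notes that under the hypothesis $b_1=-2b_2\le 0$, the \emph{square roots} of the diagram weights evolve under mutation exactly as the arrow weights of the $\widetilde C_n$ diagram in Fig.~\ref{af_non} with a coefficient vector satisfying the ordinary annulus property; the result then follows from Theorem~\ref{t_af_double}. This reduction stays entirely on the diagram/orbifold side and never requires controlling an unfolded coefficient vector through non-composite mutations.
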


We will abuse notation by calling the condition in  Theorem~\ref{t_af_double4} the {\em annulus property} as well.  

\begin{proof}[Proof of Theorem~\ref{t_af_double4}]

The necessity follows from Theorem~\ref{t_rk2_}, part (2).

To prove the sufficiency, notice that all diagrams in question correspond to an unpunctured disk with two orbifold points and several marked points at the boundary (see~\cite{FeSTu3}). In particular, any triangulation corresponding to such a diagram consists of a monogon shown in Fig.~\ref{orb}(d) and a polygon $S_{out}$.

Now the proof is similar to the part (2) of the rank 2 case. 
In the assumptions of the theorem, the square roots of the weights of the diagram change under mutations in the same way as the weights of arrows of the diagram of type $\t C_n$ shown in Fig.~\ref{af_non}  with coefficient vector satisfying the annulus property, so the statement follows from Theorem~\ref{t_af_double}. 
   \end{proof}

\subsection{Extended affine mutation classes}

The result here is similar to the skew-sym\-metric case.

\begin{theorem}
There are no admissible vectors for diagrams of any extended affine mutation class.
\end{theorem}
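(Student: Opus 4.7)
The plan is to reduce the four non-skew-symmetric extended affine mutation classes $F_4^{(*,*)}$, $F_4^{(*,+)}$, $G_2^{(*,*)}$, $G_2^{(*,+)}$ to the already-established skew-symmetric cases (Theorems~\ref{t_e6_11}--\ref{t_e8_11}) via unfoldings, mirroring the strategy used for the affine case in Theorem~\ref{t_af_double}. I would begin by exhibiting an explicit unfolding of each of these four diagrams to a quiver of type $E_6^{(1,1)}$, $E_7^{(1,1)}$, or $E_8^{(1,1)}$; these should be natural extended-affine versions of the affine unfoldings $\widetilde F_4\to\widetilde E_6$ and $\widetilde G_2\to\widetilde D_4$ recorded in~\cite{FeSTu2}.

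Suppose then that $\bm b$ is a non-zero admissible vector for one of these diagrams $\Sigma$. Using the unfolding construction for extended matrices (Example~\ref{a14}), I would obtain an extended skew-symmetric matrix $\widetilde C$ whose top part is the unfolding $C$ of the principal part of $\Sigma$ and whose coefficient row is $-\widehat{\bm b}$, with each $-b_j$ replicated $d_j$ times across the block $E_{n+1}\times E_j$. Since $\bm b$ is admissible, mutations of $\Sigma$ lift to composite mutations $\widehat\mu$ of $\widetilde C$ producing only finitely many frozen-vertex quivers. Block replication preserves the annulus property $b_i=-b_j\le 0$, so $\widehat{\bm b}$ satisfies the annulus property at every double arrow of the unfolded quiver that arises above an annulus-property double arrow of a diagram in the mutation class of $\Sigma$.

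I would then port the argument of Theorems~\ref{t_e6_11}--\ref{t_e8_11} into this setting: those proofs force $\bm b=0$ by combining the annulus property on $Q$ with the annulus property on $\mu(Q)=Q^{\mathrm{op}}$ for a specific mutation sequence $\mu$. The main obstacle is to realize an analogous contradiction using only composite mutations descending from mutations of $\Sigma$, or equivalently to produce a mutation sequence on $\Sigma$ itself whose unfolding yields the same incompatibility of annulus conditions. If this lifting turns out to be delicate for some of the four classes, I would fall back on a direct argument: choose a representative containing a double arrow, exhibit a mutation sequence on $\Sigma$ with $\mu(\Sigma)\cong\Sigma^{\mathrm{op}}$, and carry out the step-by-step computation of the $E_k^{(1,1)}$ proofs while tracking the skew-symmetrizers $d_i$, concluding that both annulus conditions together force every $b_i$ to vanish.
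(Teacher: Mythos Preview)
Your proposal is correct and follows essentially the same route as the paper: unfold each of the four diagrams to the specific quivers of types $E_6^{(1,1)}$, $E_7^{(1,1)}$, $E_8^{(1,1)}$ used in Theorems~\ref{t_e6_11}--\ref{t_e8_11}, and then argue that the annulus-property contradiction there descends to $\Sigma$. The ``main obstacle'' you flag is resolved exactly as in your first option: the paper observes that the particular mutation sequences $\mu$ constructed in those theorems are already composite mutations for these unfoldings, so the violation of the annulus property after $\h\mu$ in the unfolded quiver is the lift of a violation after a genuine mutation sequence of $\Sigma$, and no fallback computation is needed.
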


\begin{proof}
Assume that $\Sigma$ is one of the four diagrams of extended affine type (see Fig.~\ref{fg}), and let $\bm b$ be an admissible coefficient vector. It is clear that $\bm b$ must satisfy the annulus property. 

  The diagrams  of types $F_4^{(*,*)}$ and  $G_2^{(*,+)}$ have unfoldings to quivers of type $E_6^{(1,1)}$, the diagram of type $F_4^{(*,+)}$ has an unfolding to a quiver of type $E_7^{(1,1)}$, and the diagram of type $G_2^{(*,*)}$ has an unfolding to a quiver of type $E_8^{(1,1)}$, where all the unfolded quivers are precisely those shown in Figs.~\ref{e6_11},~\ref{e7_11}, and~\ref{e8_11}, see~\cite{FeSTu2}. Moreover, it is easy to see that the mutation sequences used in the proof of Theorems~\ref{t_e6_11} and~\ref{t_e8_11} are sequences of composite mutations  (with respect to the unfoldings above) for certain sequences of mutations for the diagrams.

Therefore, if we take an unfolding $Q$ of $\Sigma$ with the unfolded coefficient vector $\h{\bm b}$ and apply a mutation sequence $\h\mu$ constructed in Section~\ref{extended} such that $\h\mu(\h{\bm b})$ does not satisfy the annulus property, then there exists a mutation sequence $\mu$ of $\Sigma$ such that $\mu(\bm b)$ does not satisfy the annulus property either, which shows that $\bm b$ cannot be admissible.
\end{proof}

\subsection{Diagrams from orbifolds}
We now want to extend Theorem~\ref{t_standard} to the orbifolds case. As in the surface case, we exclude finite and affine types, i.e. unpunctured disks with at most two orbifold points and once punctured disks with at most one orbifold point. 

First, we can define a standard triangulation of an orbifold in a similar way. We add orbifold points to the list of features, and place them to the left of all other features. We then place the two leftmost orbifold points in a monogon (see Fig.~\ref{orb}(d)), and all the others in digons (Fig.~\ref{orb}(a)), the subdiagram corresponding to the digon is shown in Fig.~\ref{orb}(b).

Vertices $v_1$ and $v_2$ are defined in the same way as in the surface case. In the case of orbifold points being the only features (note that there should be at least three of them and thus there is at least one digon, otherwise the diagram is of finite or affine type),  $v_1$ and $v_2$ are defined as in Fig.~\ref{orb}(c). The set $I$ is also defined in the same way.

\begin{figure}[!h]
\begin{center}
  \epsfig{file=./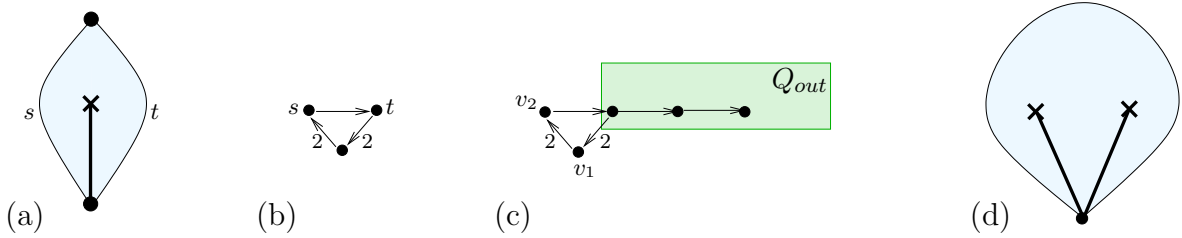,width=0.95\linewidth}
\put(-445,0){(a)}  
\put(-438,40){\scriptsize $s$}  
\put(-390,40){\scriptsize $t$}  
\put(-350,0){(b)}  
\put(-338,42){\scriptsize $s$}  
\put(-301,42){\scriptsize $t$}  
\put(-329,29){\tiny $2$}  
\put(-310,29){\tiny $2$}  
\put(-260,0){(c)}  
\put(-241,29){\tiny $2$}  
\put(-220,29){\tiny $2$}  
\put(-230,19){\scriptsize $v_1$}  
\put(-252,45){\scriptsize $v_2$}  
\put(-155,52){\small $Q_{out}$}  
\put(-80,0){(d)}  
\caption{Standard triangulations of orbifolds: (a) a triangulated digon with an orbifold point, (b) corresponding quiver, (c) vertices $v_1$ and $v_2$, (d) triangulated monogon with two orbifold points. }
\label{orb}
\end{center}
\end{figure}

\begin{theorem}
  \label{t_standard_orb}
  Let  $\O$ be an orbifold with at least one boundary component distinct from an unpunctured disk with at most two orbifold points and from once punctured disk with at most one orbifold point.
  Suppose that $\O$ is triangulated in the standard way.
  Then a coefficient vector $\b=(b_1,\dots,b_n)$ is admissible if and only if it satisfies the following conditions:
 \begin{itemize} 
\item[(a1)] $b_i=0$ for  $i\in I$; 
\item[(a2)] the annulus property is satisfied;
\item[(a3)] for the vertices $v_1$ and $v_2$ one has $b_1=-b_2\le 0$ if $d_1\ge d_2$, and $b_1=-2b_2\le 0$ if $d_1< d_2$.  
  
 \end{itemize}  
\end{theorem}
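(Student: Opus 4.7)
The plan is to mirror the proof of Theorem~\ref{t_standard} using Theorem~\ref{t_orb_per} in place of Theorem~\ref{t_per}, translating admissibility of $\b$ into the existence of a peripheral lamination with shear-coordinate vector $\b$. As before, one splits the statement into a necessity direction (every peripheral lamination satisfies (a1)-(a3)) and a sufficiency direction (no non-peripheral lamination satisfies (a1)-(a3)). The new ingredient compared to the surface case is the presence of orbifold points, which affect both the local structure of the triangulation near $v_1,v_2$ and the shear-coordinate bookkeeping.

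For necessity, I would check (a1)-(a3) directly. Condition (a1) holds since no peripheral curve crosses an arc labeled by $I$ non-trivially, and (a2) follows from Corollary~\ref{cor-ann} combined with Remark~\ref{aff_gen1} and Theorem~\ref{t_af_double4} which cover the two flavours of double/$(1,4)$-arrows appearing in the standard triangulation of an orbifold. For (a3), if $d_1\ge d_2$ the surface-case computation applies verbatim: the only peripheral curve with nonzero shear coordinates on $\gamma_1,\gamma_2$ is the closed curve $C$ around the outer boundary, for which $b_1=-b_2=-1$. If $d_1<d_2$, the relevant configuration is $\gamma_1,\gamma_2$ enclosing a rightmost orbifold point as in Fig.~\ref{orb}(c); a direct shear-coordinate computation using the conventions of~\cite{FeSTu3} yields the weighted relation $b_1=-2b_2\le 0$, which is exactly the annulus property in the $(1,4)$-style established in Theorem~\ref{t_af_double4}.

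For sufficiency, I would argue by contradiction: assuming a non-peripheral lamination $L$ satisfies (a1)-(a3), I would construct a non-peripheral curve with all shear coordinates zero, contradicting the bijection between laminations and coefficient vectors of~\cite{FT,FeSTu3}. This means replaying observations (O1)-(O7) in the orbifold setting. Items (O1)-(O4) transfer essentially unchanged, with the additional remark that no curve of $L$ can spiral around an orbifold point, since such spiralling would place a nonzero shear coordinate on an arc corresponding to a vertex in $I$ (or one adjacent to $v_1,v_2$), violating (a1) or (a3). The orbifold analogues of (O5)-(O7) then split into cases according to the type of the rightmost feature; the enumeration of $p$-local gluings through the digon around a hole, handle or puncture is unchanged, and the new case of a digon around an orbifold point yields a finite list of candidate segments each of which is ruled out using the $b_1=-2b_2\le 0$ constraint together with a weighted intersection count. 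The concluding endpoint-shifting step that modifies $C_{np}$ inside $S_{out}$ to a curve $C_{np}'$ with vanishing coordinates then works verbatim.

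The main obstacle is the orbifold version of observation (O7), where the asymmetry $d_1<d_2$ forces a weighted accounting of intersections with $\gamma_1,\gamma_2$ and the classification of non-peripheral candidates threading through the orbifold digon of Fig.~\ref{orb}(a) is more delicate than the monogon-around-a-hole analysis of Theorem~\ref{t_standard}. One attractive alternative is to appeal to unfolding from~\cite{FeSTu2,FeSTu3}: the orbifold double cover lifts the standard triangulation of $\O$ to a triangulation of a surface on which Theorem~\ref{t_standard} applies, giving necessity of (a1)-(a3) at once. Sufficiency, however, does not descend directly through the unfolding because mutations of the diagram correspond only to composite mutations on the cover, so the direct peripheral-lamination argument outlined above cannot be bypassed.
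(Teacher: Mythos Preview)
Your plan is correct and matches the paper's approach: reduce to Theorem~\ref{t_standard} via Theorem~\ref{t_orb_per}, then rerun the necessity/sufficiency argument with observations (O1)--(O7). The paper's execution is considerably leaner than you anticipate, though, because of one simplifying observation you do not make explicit. In the standard triangulation the orbifold points are placed \emph{leftmost} among all features, so unless orbifold points are the \emph{only} features present, every arc incident to an orbifold point lies in the index set $I$. In that situation the orbifold points are completely absorbed by condition (a1), and (a2), (a3), and the entire chain (O1)--(O7) carry over verbatim from the surface case with no weighted bookkeeping at all. The only genuinely new case is when $\O$ has no holes, handles, or punctures; then the rightmost feature adjacent to $\gamma_1,\gamma_2$ is an orbifold digon as in Fig.~\ref{orb}(c). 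Even here the paper does not carry out the candidate-segment enumeration you sketch for the analogue of (O7); it simply observes that, exactly as in the surface proof, the only curve that can contribute nontrivially to $b_1,b_2$ is the closed curve $C$, and a single shear-coordinate computation gives $(b_1,b_2)=(-1,1)$ when $d_1\ge d_2$ and $(b_1,b_2)=(-2,1)$ when $d_1<d_2$, which is precisely (a3). So the ``delicate weighted intersection count'' you flag as the main obstacle collapses to one computation for one curve, and the unfolding detour you mention is unnecessary.
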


The proof is exactly the same as in the surface case. The only extra case is when the only features are orbifold points (otherwise, all the arcs incident to orbifold points belong to the set $I$), and, as in the surface case, the admissibility condition is prescribed by the shear coordinates $-b_1$ and $-b_2$ of the closed curve $C$. If $d_1=2d_2$, then  one has $b_1=-1=-b_2$, and if $d_2=2d_1$, then one has  $b_1=-2$ and $b_2=1$, which gives precisely (a3).

\section{Annulus property as criterion of mutation finiteness}
\label{sec_cr}

We now prove a criterion of mutation finiteness in terms of the annulus property applied to the whole mutation class, this was proposed by Sergey Fomin. For simplicity, we consider the skew-symmetric case (i.e., quivers), which then can be easily generalized to the skew-symmmetrizable case (see Remark~\ref{gen}).

\begin{theorem}
 \label{cr3} 
  Let $Q$ be a quiver with a frozen vertex $v$. Suppose that the subquiver $Q\setminus v$ is mutation-finite. 
 Then $Q$ is mutation-finite if and only if the the annulus property holds in every quiver $Q'$ mutation-equivalent to $Q$ for every double arrow contained in $Q'\setminus v$.
  
\end{theorem}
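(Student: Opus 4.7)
For the ``only if'' direction my plan is as follows. Assuming $Q$ is mutation-finite, let $Q'$ be mutation-equivalent to $Q$ with a double arrow $v_i\Rightarrow v_j$ in $Q'\setminus v$. I would observe that under mutations at $v_i$ and $v_j$ the arrow between them merely flips direction while remaining of weight~$2$, and the updates of $b_i, b_j$ depend only on this arrow and on the current values of $(b_i, b_j)$; no other vertex of $Q'$ (mutable or frozen) feeds back into this triple. Hence the evolution of $(b_i, b_j)$ under any composition of $\mu_i, \mu_j$ coincides with that of the rank-$2$ ice quiver spanned by $\{v_i, v_j, v\}$ with initial coefficients $(b_i, b_j)$. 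Since $Q'$ is mutation-finite, all matrix entries stay bounded, so the rank-$2$ ice quiver is itself mutation-finite, and Corollary~\ref{cor-ann} forces $b_i = -b_j \le 0$.

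For the ``if'' direction I would proceed by cases based on the classification of mutation-finite quivers from Section~\ref{background} applied to $Q\setminus v$, matching the hypothesis ``annulus property throughout the mutation class'' against the admissibility criterion already established in each case. If $Q\setminus v$ is of finite type, no quiver in the mutation class has a double arrow, the hypothesis is vacuous, and every $\b$ is admissible by~\cite{FZ4}. If $Q\setminus v$ is of affine type, every mutation class either contains a representative with a double arrow or is of type $\widetilde A$ (handled via Lemma~\ref{l_ann} and Remark~\ref{a-any}), and Theorem~\ref{aff_gen} converts the annulus property in such a representative into admissibility. For $Q\setminus v$ of extended affine type $E^{(1,1)}_{6,7,8}$ or type $X_7$, the proofs of Theorems~\ref{t_e6_11}--\ref{t_x7} are already structured as deductions showing that imposing the annulus property on $Q$ together with specific mutants forces $\b = 0$; the full quiver is then $Q\setminus v$ together with an isolated frozen vertex and so is mutation-finite. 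The $X_6$ case is precisely Remark~\ref{x6-ann}.

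The main obstacle will be the surface case, where Theorem~\ref{t_per} reduces the goal to showing that the lamination $L$ encoded by $\b$ is peripheral; I would argue by contrapositive. If $L$ is non-peripheral, I would locate an essential simple closed curve $C$ in $S$ whose geometric intersection with $L$ is positive. The plan is then to construct a triangulation of $S$ whose restriction to a tubular neighbourhood of $C$ consists of two arcs encircling $C$ on either side, forming a copy of the $\widetilde A_1$ pattern of Lemma~\ref{l_ann}. In the associated $Q'$ these two arcs span a double arrow in the mutable part, and the restriction of $L$ to this neighbourhood contains a bridging curve, which by the coordinate analysis of Lemma~\ref{l_ann} is incompatible with $b_i = -b_j \le 0$. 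Arranging this triangulation (choosing basepoints of the encircling loops according to whether $C$ is separating or non-separating and to the availability of marked points or punctures on each side) and verifying that the local restriction remains a genuine bridging arc are the delicate topological steps; the rest of the argument reduces to invoking the admissibility criteria of Sections~\ref{surfaces}--\ref{x}.
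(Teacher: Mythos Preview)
Your proposal is correct and follows essentially the same route as the paper: the same case split on the mutation type of $Q\setminus v$, the same invocations of \cite{FZ4}, Theorem~\ref{aff_gen}, Remark~\ref{x6-ann}, and the structure of the proofs of Theorems~\ref{t_e6_11}--\ref{t_x7}, and the same contrapositive strategy in the surface case via Theorem~\ref{t_per} and Lemma~\ref{l_ann}.

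The one point where the paper is more explicit than your sketch is in selecting the curve $C$. You say you would ``locate an essential simple closed curve $C$ whose geometric intersection with $L$ is positive'' and then handle basepoints case by case; but if $C$ happens to be separating with all marked points on one side (which can occur on a positive-genus surface with a single puncture, say), the encircling-loops construction is unavailable. The paper sidesteps this by not choosing an arbitrary $C$: it uses that the pure mapping class group is generated by Dehn twists along a specific finite set of curves (Humphries generators for $g\ge 2$, non-separating curves plus boundary curves for $g=1$, separating curves with features on both sides for $g=0$), so if $L$ is non-peripheral then one of \emph{these} curves must meet $L$, and each of them by construction admits the two-loop annulus. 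Your plan would close once you incorporate this choice of $C$. You also omit the rank~$2$ case (the paper dispatches it via Theorem~\ref{t_rk2}); the subcases $a=1,2$ are already absorbed into your finite and affine cases.
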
  

\begin{proof}
  The necessity follows from Corollary~\ref{cor-ann}. Below we prove that the condition is also sufficient. As before, we denote by $\b$ the coefficient vector associated with the vertex $v$.

  Assume that the annulus property holds in any quiver of the mutation class. Let $Q_*=\langle Q\setminus v\rangle$ be the subquiver spanned by all mutable vertices. We will consider the following cases:
  \begin{itemize}
  \item[--] if $Q_*$ is of finite type, then every vector $\b$ is admissible by~\cite{FZ4};
  \item[--] if $Q_*$ is affine, then the statement follows from Theorem~\ref{aff_gen};
  \item[--]  if $Q_*$ is mutation-equivalent to $X_6$, then the statement follows from Remark~\ref{x6-ann};
  \item[--] if $Q_*$ is mutation-equivalent to $X_7$ or $E_6^{1,1}, E_7^{1,1}, E_8^{1,1}$, then the proofs of Theorems~\ref{t_e6_11}--\ref{t_x7} show that the annulus property cannot be satisfied in every quiver of the mutation class, and thus there are no admissible vectors;
  \item[--] if $Q_*$ is of rank 2, the statement follows immediately from Theorem~\ref{t_rk2};  

  \item[--] otherwise, $Q_*$ is a quiver arising from a surface; the rest of the proof below is aimed at settling the question 
    for this case.
  \end{itemize}

  From now on we will assume that $Q_*$ is of surface type, and will assume that $\b$ is {\bf not} admissible. Our aim is to find a quiver in the mutation class for which the annulus property does not hold. 

  By Theorem~\ref{t_per}, as $\b$ is not admissible, it corresponds to a non-peripheral lamination $L$. Hence, there exists a closed curve intersecting the lamination $L$ in a non-trivial way. We will pick such an intersecting curve in a particular way and will use it to construct a special triangulation, which will provide us with a quiver where annulus property does not hold.

  Let $g$ be the genus of the surface $S$ corresponding to the quiver $Q_*$. We will consider separately the cases of $g\ge 2$, $g=1$ and $g=0$.

  \medskip
  \noindent
   \underline{$\mathbf{g\ge 2}$}. For a surface $S$ of genus $g\ge 2$, the pure mapping class group $PMod(S)$ is generated by finitely many Dehn twists with respect to non-separating curves (see e.g.~\cite[Corollary~4.16]{FaM}). One can choose these curves as in Fig.~\ref{fig-gen}, left (these are called {\em Humphries generators}, see e.g.~\cite[Section 4.4.4]{FaM}).  Denote these curves $C_1,\dots, C_m$.
  As the lamination $L$ is not peripheral, at least one of the curves  $C_1,\dots, C_m$ intersects $L$ (otherwise, the pure mapping class group will act on $L$ trivially). We will assume that $C_1$ intersects $L$.

\begin{figure}[!h]
  \begin{center}
    \epsfig{file=./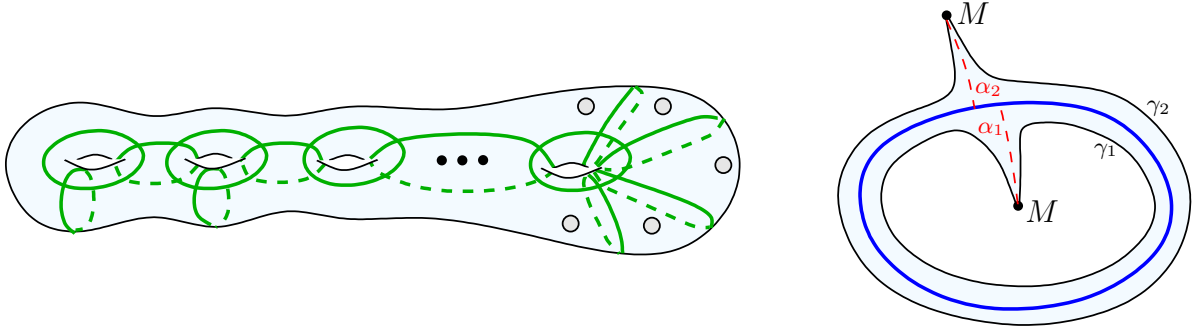,width=0.99\linewidth}
\put(-82,74){\color{red} \scriptsize $\alpha_1$}
\put(-83,89){\color{red} \scriptsize $\alpha_2$}
\put(-38,66){\scriptsize $\gamma_1$}
\put(-18,82){\scriptsize $\gamma_2$}
\put(-64,40){$M$}
\put(-90,115){$M$}    
\caption{Left: Humphries generators of $PMod$ in case of $g\ge 2$ (small circles stay for punctures and boundary components). Right: construction of the annulus. }
\label{fig-gen}
\end{center}
\end{figure}

  Let $M$ be a marked point (either a boundary marked point of a puncture). Let $\alpha_1$ and $\alpha_2$ be non-intersecting non-self-intersecting paths connecting $M$ to each of the sides of $C_1$, such paths exist since $C_1$ is a non-separating curve. Let $\gamma_1$ and $\gamma_2$ be loops based at $M$ and constructed by $\gamma_1=\alpha_1\beta\alpha_1^{-1}$ and  by $\gamma_2=\alpha_2\beta\alpha_2^{-1}$, where $\beta$ is the closed path along $C_1$, see Fig.~\ref{fig-gen}, right.
  Let $T$ be any triangulation containing arcs $\gamma_1$ and $\gamma_2$. We will show that in the triangulation $T$ the annulus property breaks.

  Indeed, as $L$ crosses $C_1$ non-trivially, the restriction of $L$ to the annulus bounded by  $\gamma_1$ and $\gamma_2$ is a non-empty  non-peripheral curve. Hence, the annulus property for this annulus does not hold in view of Lemma~\ref{l_ann}.
  
  \medskip
  \noindent
  \underline{$\mathbf{g=1}$}. In this case, one cannot always find the generators of the mapping class group by twists along non-separating curves only, however there exists a set of generators by twists along finitely many non-separating curves and $k-1$ boundary curves, where $k$ is the number of boundary components on $S$ (here, by a boundary curve we mean a closed peripheral curve along one of the boundary components), see~\cite{FaM}.

  If $k\le 1$, then there still exists a set of generators by twists in non-separating curves, and we can use the same reasoning as before.

  If $k>1$, then for every generator we can find at least one marked point on each side with respect to the corresponding curve (i.e. for non-separating curves we proceed as before, and for a boundary curve we choose a marked point lying on the corresponding boundary component and a marked point  lying on a different boundary component). So, we still are able to construct the annulus as in Fig.~\ref{fig-gen}, right, but possibly using different marked points on different sides.

    \medskip
  \noindent
   \underline{$\mathbf{g=0}$}. In this case, any generator of the pure mapping class group is a twist along a separating curve, but for any such curve one can find at least one marked point on each of its sides, and thus one can apply the same construction of an annulus as before.

 \end{proof}

 \begin{cor}
   \label{cor_cr}
Let $Q$ be a quiver with a frozen vertex. Then
 $Q$ is mutation-finite if and only for every quiver $Q'$ in the mutation class of $Q$ every rank $3$ subquiver of $Q'$ is mutation-finite.

 \end{cor}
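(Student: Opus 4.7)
The plan is to deduce the corollary from Theorem~\ref{cr3}, combined with the classical criterion \cite[Corollary~8]{DO} characterising mutation-finiteness of quivers \emph{without} frozen vertices via rank~$3$ subquivers, and with Theorem~\ref{t_rk2}. I read ``rank~$3$ subquiver of $Q'$'' as any $3$-vertex subquiver of $Q'$, possibly containing the frozen vertex $v$.

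For the forward direction, I will first note that if $Q$ is mutation-finite then so is $Q\setminus v$, since mutation at a mutable vertex commutes with deleting $v$, and therefore the mutation class of $Q\setminus v$ is the image of the mutation class of $Q$ under this deletion. More generally, restriction to any subquiver $R$ commutes with mutation at a vertex inside $R$: this is immediate from the formula $b'_{ij}=b_{ij}+\sgn(b_{ik})[b_{ik}b_{kj}]_+$, which depends only on entries internal to $R$ when $i,j,k\in R$. Consequently, for any rank~$3$ subquiver $R$ of any $Q'$ mutation-equivalent to $Q$, the mutation class of $R$ (as an ice quiver) embeds into the finite set of $3$-vertex subquivers of elements of the mutation class of $Q$, and is therefore finite.

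For the backward direction, I start from the hypothesis and first restrict attention to $3$-vertex subquivers that avoid $v$. Lifting mutation sequences at mutable vertices from $Q\setminus v$ to $Q$, I will conclude that every rank~$3$ subquiver of every quiver in the mutation class of $Q\setminus v$ is mutation-finite; then \cite[Corollary~8]{DO} gives that $Q\setminus v$ itself is mutation-finite. This puts me in the setting of Theorem~\ref{cr3}, and it remains to verify the annulus property for every double arrow in $Q'\setminus v$ as $Q'$ ranges over the mutation class of $Q$. Fix such a double arrow $v_1\Rightarrow v_2$ and consider the rank~$3$ subquiver $R=\langle v_1,v_2,v\rangle$, which is mutation-finite by hypothesis. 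Its mutable part is a rank~$2$ quiver with double arrow, so Theorem~\ref{t_rk2}(2) forces the coefficient vector of $R$ to satisfy $b_1=-b_2\le 0$; this is precisely the annulus property. Theorem~\ref{cr3} then concludes that $Q$ is mutation-finite.

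There is no serious obstacle: the argument is a straightforward dovetailing of Theorem~\ref{cr3}, \cite[Corollary~8]{DO} and the rank~$2$ admissibility classification. The only mildly delicate point is the observation that restricting to a subquiver commutes with mutation at a vertex internal to the subquiver, which is immediate from the mutation formula. The skew-symmetrizable (diagram) version is proved identically, substituting Theorem~\ref{t_rk2_} for Theorem~\ref{t_rk2} (accounting both for weight-$4$ double arrows and for arrows of weight $(1,4)$ via the modified annulus property) and using the diagram version of Theorem~\ref{cr3} announced in Remark~\ref{gen}.
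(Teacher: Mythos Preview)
Your proof is correct and takes essentially the same approach as the paper. The only cosmetic difference is that the paper argues the backward implication by contrapositive (splitting on whether $Q\setminus v$ is mutation-infinite, and in the mutation-finite case invoking Theorem~\ref{cr3} to locate a double arrow where the annulus property fails, so that $\langle v_1,v_2,v\rangle$ is mutation-infinite), whereas you argue directly via \cite[Corollary~8]{DO} and Theorem~\ref{t_rk2}(2); the logical content is identical.
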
  

 \begin{proof}
   The ``only if'' direction is evident. Suppose that $Q$ is mutation-infinite. If the subquiver $Q_*=Q\setminus v$ (where $v$ is the frozen vertex) is also mutation-infinite, then the mutation class of $Q_*$ contains a quiver with an arrow of multiplicity higher than $2$ and hence any connected rank $3$ subquiver containing that arrow is mutation-infinite. If $Q_*$ is mutation-finite, then, by Theorem~\ref{cr3},  
   there is a quiver $Q'$ in the mutation class of $Q$ where the annulus property does not hold (for some vertices $v_1, v_2$ connected by a double arrow). Then the rank 3 subquiver of $Q'$ spanned by $\langle v_1,v_2,v\rangle$ is mutation-infinite.
   
 \end{proof}  
 
 \begin{remark}
   \label{gen}
Both Theorem~\ref{cr3}  and Corollary~\ref{cor_cr} can be easily generalized to the skew-symmetrizable case,
where the annulus property is understood as in Theorem~\ref{t_af_double4}. The proofs are the same as in the skew-symmetric case. 

\end{remark}

\end{document}